\documentclass{article}
\usepackage[a4paper, textwidth=500pt, tmargin=.95in]{geometry}

\makeatletter
\renewcommand\tableofcontents{%
    \@starttoc{toc}%
}
\makeatother

\usepackage{Timstyle}

\usepackage{lettrine}
\usepackage[normalem]{ulem}
\usepackage{authblk}
\DeclareMathAlphabet{\mathcal}{OMS}{cmsy}{m}{n}
\renewcommand{\k}{\mathfrak k}

\title{The Hitchin and Knizhnik--Zamolodchikov connections\\ are projectively equivalent in the genus zero case\footnotemark[1]}
\author[1,2]{J\o rgen Ellegaard Andersen}
\author[3]{Tim Henke}
\affil[1]{Centre for Quantum Mathematics, University of Southern Denmark}
\affil[2]{Danish Institute of Advanced Study} 
\affil[3]{Centre for Mathematics of the University of Porto}

\renewcommand{\git}{/\!\!/} % GIT quotient
\newcommand{\Sigmao}{\Sigma^\circ}

\newcommand{\vmu}{\vec \mu}

\pgfplotsset{compat=1.18}
\begin{document}

\maketitle 

\renewcommand{\thefootnote}{\fnsymbol{footnote}}

\footnotetext[1]{This work is supported by the "ReNewQuantum" ERC Synergy grant
 agreement No. 810573 and the Simons Foundation collaboration grant on New Structures in Low-Dimensional Topology.} \footnotetext[1]{This work is partially supported by FCT (Fundação para a Ciência e Tecnologia), under the projects with reference UID/00144/2025, and associated DOI \url{https://doi.org/10.54499/UID/00144/2025}, CMUP, member of LASI, and 2024.15931.PEX Higgs bundles: geometry, algebra and physics.}

\abstract{
This paper establishes the projective equivalence between the Knizhnik--Zamolodchikov connection and the Hitchin connection in genus 0 with at least 3 marked points. The Knizhnik--Zamolodchikov connection is defined on the sheaf of conformal blocks in the Tsuchiya--Ueno--Yamada model of conformal field theory. The Hitchin connection is defined on the Verlinde bundle via geometric quantisation of the moduli space of flat connections. Pauly's isomorphism establishes the equivalence of these two vector bundles. The main theorem of this paper is that the isomorphism intertwines these two connections up to a scalar-valued one-form. In addition, this theorem is used to construct a Hitchin connection through an auxiliary metaplectic correction. As a corollary of the main theorem, this construction of the Hitchin connection is projectively unique and projectively flat.
}

\section{Introduction}

Classical Chern--Simons theory was developed originally in \cite{chern1974characteristic} and first discussed as a quantum theory in \cite{deser1982three, schonfeld1981mass}, which was further expanded on in \cite{witten1989quantum, axelrod34geometric, ramadas1989some}. The Hitchin connection, originally studied in \cite{axelrod34geometric}, mathematically constructed in \cite{hitchin1990flat}, and researched and generalised further in a large number of subsequent publications including \cite{van1998hitchin, andersen2012hitchin, andersenmetaplectic, laszlo2013monodromy, andersen2014hitchin, ouaras2023parabolic, biswas2023hitchin, biswas2024geometrization}, governs the dynamics of states in quantum Chern--Simons gauge theory under deformations of the Riemann surface on which they depend and establishes the topological nature of quantum Chern--Simons theory. The Knizhnik--Zamolodchikov connection, based on the Knizhnik--Zamolodchikov equations derived in \cite{knizhnik1984current}, determines the Riemann surface dependence of covacuum states of Wess--Zumino--Novikov--Witten conformal field theory, developed in \cite{wess1971consequences, witten1983global} and formalised in \cite{kawamoto1988geometric, TUY, ueno2008conformal}.

Quantum Chern--Simons gauge theory for compact gauge groups and Wess--Zumino--Novikov--Witten conformal field theory were first suggested to be equivalent theories on physical grounds by Witten in \cite{witten1989quantum}, and this equivalence was formalised mathematically in various stages in \cite{beauville1994conformal,kumar1997infinite,pauly1996espaces, laszlo1997line}. In full generality, the theorem states that there is an isomorphism, which shall be referred to as Pauly's isomorphism, which is natural up to projective equivalence, between the geometric quantisation of the moduli space of flat connections over a marked and labelled Riemann surface and the vector space of covacua over the same marked and labelled Riemann surface in the Tsuchiya--Ueno--Yamada model of conformal field theory.

The main theorem of this paper, \Cref{thm:main}, proves that Pauly's isomorphism gives a projective equivalence of the Knizhnik--Zamolodchikov connection on the bundle of covacua over Teichmüller space and the Hitchin connection on the Verlinde bundle, also over Teichmüller space, whose fibres are obtained by applying geometric quantisation to the moduli space of parabolic bundles, for all simple complex Lie groups in genus zero. Unlike in higher genera, the genus zero case cannot be deduced from the non-parabolic counterpart. As a consequence of the theorem, a new construction of the Hitchin connection is developed via an auxiliary metaplectic correction, and this Hitchin connection is shown to be in general projectively unique and projectively flat.

The proof of the main theorem is achieved via the construction of a geometric model of the Knizhnik--Zamolodchikov connection as a differential operator on the GIT quotient of a product of flag varieties via Bott--Borel--Weil theory. This GIT quotient shares a dense Zariski-open subset with the moduli space of parabolic bundles on which the two second-order differential operators can be compared. Via a careful analysis of the scaling behaviour of both differential operators under increasing powers of the line bundle, their difference can be determined to vanish in both the second and the first order. Thus the difference is given by multiplication by a holomorphic function on the dense open subset. Making use of the fact that both operators preserve the sections that extend holomorphically, this function must itself extend holomorphically to the compact spaces and therefore be constant, proving their projective equivalence. While the constructions rely on a substantial amount of pre-existing work, the proof consists purely of elementary arguments.

The results of \cite{TUY, ueno2008conformal}, which include the Knizhnik--Zamolodchikov connection construction in genus zero, was used in \cite{AU1, AU2, AU3, AU4} to develop a topological quantum field theory based on Tsuchiya--Ueno--Yamada conformal field theory that is equivalent to the Witten--Reshetikhin--Turaev (WRT) topological quantum field theory, for which genus zero is decisive. This equivalence makes it possible to perform calculations and proofs concerning the combinatorial WRT topological quantum field theory (TQFT) with geometric methods and arguments involving the moduli spaces of flat connections, such as Andersen's the Asymptotic Faithfulness Theorem \cite{andersen2006asymptotic}.

A closely related result was proven in \cite{laszlo1998hitchin}, showing that the Hitchin connection and the Tsuchiya--Ueno--Yamada connections coincide for genus at least 3 without parabolic points.

In \cite{biswas2021ginzburg} a connection on the Verlinde bundle was introduced in terms of heat kernels, which was identified in \cite{biswas2024geometrization} with the Tsuchiya--Ueno--Yamada connection. The heat kernel is not a priori connected to the original construction of the Hitchin connection in \cite{hitchin1990flat} and its generalisation in \cite{andersen2012hitchin, andersenmetaplectic}. Hence the work done in \cite{biswas2023hitchin, biswas2024geometrization} is at this point in time logically independent of the results in this paper. However, by combining the result in this work with that of \cite{biswas2023hitchin, biswas2024geometrization}, it can therefore be concluded that the connection constructed from the heat kernel is a Hitchin connection in genus zero in the sense of \cite{andersen2012hitchin, andersenmetaplectic}, coinciding with the metaplectic-corrected Hitchin connection. Further important work on the Hitchin and Knizhnik--Zamolodchikov connections in the genus zero case was done in \cite{faltings1993stable, ramadas1998faltings, sun2004hitchin, ran2006jacobi, ramadas2009harder, belkale2009strange, pauly2023hitchin}.

This work has furthermore a clear relation to quantum computing, given the work of Freedman and collaborators \cite{Freedman&Co1,Freedman&Co}, which has established that the WRT-TQFT braid-representation of the genus zero mapping class group at level $k=3, 5, 7, 8, \ldots$ is a theoretical universal quantum computer, when combined with the isomorphism established in \cite{AU1, AU2, AU3, AU4} linking these combinatorial isomorphism to the Knizhnik--Zamolodchikov representations studied in this work. In particular, one gets a topological code, which supports universal quantum computing, yet is efficient, given the low codimension, which the space of conformal blocks has inside the tensor product of the corresponding irreducible representation. Further, its TQFT connection and our symplectic reduction picture developed in this paper, links it to the symplectic reduction view point on quantum low density parity check codes (qLDPC) initiated in \cite{LDq1, LDq2, LDq3} and further developed in a large number of references, see e.g. the review \cite{LDq4} and references therein.

\section{The Knizhnik--Zamolodchikov connection} \label{KZC} 

In this paper, $K$ will be a simple, connected, and simply-connected compact real Lie group with complexification $G$, and $\mathfrak k$ and $\g$ their respective Lie algebras. This section introduces the Knizhnik--Zamolodchikov connection on a trivial bundle over the open configuration space of points in the projective line, whose fibres are the $\g$-invariant subspace of a tensor product of $\g$-representations. We recall how this connection preserves the subbundle consisting of the spaces of conformal blocks.

Choose a Cartan subalgebra ${\mathfrak t} \subset \g$ and a set of positive roots $\Phi_+\subset \Phi$ in the root system of $\g$ relative to $\mathfrak t$. Fix an invariant inner product $\langle\cdot,\cdot\rangle$ on $\g$ normalised so the longest root $\theta$ of $\g$ has norm squared equal $2$. Let $V_\lambda$ be the irreducible $\g$-representation corresponding to a dominant weight $\lambda$ of $\g$. Write $h$ for the dual Coxeter number of $\g$. 

For the remainder of the paper, $k$ will denote a positive integer that will be referred to as the \emph{level}. A weight $\lambda$ is said to be $k$-admissible if $0 \leq \langle\lambda,\theta\rangle \leq k$ and the set of $k$-admissible weights will be written as $\Lambda_k$.

For a tuple of dominant weights $\vlambda = (\lambda_1, \ldots, \lambda_n)$ let $V_\vlambda \defeq V_{\lambda_1} \otimes_\C \cdots \otimes_\C V_{\lambda_n}$. There is a diagonal action of $\g$ on $V_\vlambda$ and we denote by $V_\vlambda^{\g} \subset V_\vlambda$ the invariant subspace under this diagonal action.

Let $C_n(X)$ be the open configuration space of $n$ points on a Riemann surface $X$. Since $\rm{PSL}(2,{\mathbb C})$ acts simply transitive on $C_3(\CP)$, for $n\geq 3$ the moduli space of genus zero smooth curves with $n$ ordered marked points ${\mathbb M}_n$ is
$$ {\mathbb M}_n = C_n(\CP)/\mathrm{PSL}(2,{\mathbb C}) \cong C_{n-3}({\mathbb C}^\times -\{1\}).$$

\begin{definition}\index{Knizhnik--Zamolodchikov connection}
The \emph{Knizhnik--Zamolodchikov connection} is the connection in the trivial $V_\vlambda^\g$-bundle over $ {\mathbb M}_n$ given by
\begin{align} \label{eq:KZ}
\nabla^{\rm KZ} & = \d + \mathbf \Omega &\where & & \mathbf \Omega = \frac{1}{k+h} \sum_{1 \leq i < j \leq n} \Omega^{ij} \frac{\d z_i - \d z_j}{z_i - z_j} 
\end{align}
and $\Omega^{ij}$ is the action of the Casimir element $\Omega \in \g \otimes \g$ on the $i$th and $j$th component of $V_\vlambda$.
\end{definition}

It is well known that the Knizhnik--Zamolodchikov connection is flat. Recall that the Knizhnik--Zamolodchikov connection preserves the subbundle consisting of the Tsuchiya--Ueno--Yamada (TUY) spaces of covacua and induces the TUY-connection on this subbundle of covacua constructed in \cite{TUY} \cite[Proposition 3.5.1]{ueno2008conformal}.

\begin{thm}[{\cite[Equation 2.2-16]{TUY}}]
If $\vlambda$ is an $n$-tuple of $k$-admissible weights, the TUY sheaf of covacua construction applied to ${\mathbb M}_n$ gives a subbundle $ \mathcal V_{\vlambda,k}^\dagger$ of the trivial $V_\vlambda^\g$-bundle over $ {\mathbb M}_n$ which is preserved by $\nabla^{\rm KZ}$ and the induced connection on the subbundle $ \mathcal V_{\vlambda,k}^\dagger$ coincides with the TUY connection $\nabla^{\rm TUY}$.
\end{thm}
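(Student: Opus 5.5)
The proof has three parts. First I identify the fibres of the TUY sheaf of covacua on $\CP$ with a subspace of $V_\vlambda^\g$. Then I show this subspace varies holomorphically, giving a subbundle. Finally I compute the TUY connection in the coordinates $z_1,\dots,z_n$ and match it with \eqref{eq:KZ}.

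For a fixed configuration $\vec z=(z_1,\dots,z_n)\in C_n(\CP)$, which we may assume lies in $\C$ after a M\"obius transformation, the space of covacua is
$$\mathcal V_{\vlambda,k}^\dagger(\vec z)=\Hom_\C\bigl(\mathcal H_\vlambda/\g(\vec z)\mathcal H_\vlambda,\C\bigr),$$
where $\mathcal H_\vlambda=\mathcal H_{\lambda_1}\otimes\cdots\otimes\mathcal H_{\lambda_n}$ is the tensor product of level $k$ integrable highest weight $\hat\g$-modules, and $\g(\vec z)=\g\otimes H^0(\CP,\mathcal O(*\vec z))$. The first step is the standard propagation argument. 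Since $\CP$ minus the points admits meromorphic functions with poles of arbitrary order at each $z_i$, one inductively kills all negative modes. This shows that restricting covacua to the lowest-energy subspace $V_\vlambda\subset\mathcal H_\vlambda$ is injective. The constant functions force $\g$-invariance, so the image lies in $V_\vlambda^\g$. The image is exactly the set of $\g$-invariant functionals that also annihilate the vectors generated by the integrability relation $(e_\theta\otimes f)^{k-\langle\lambda_i,\theta\rangle+1}$, transported to $V_\vlambda$ through the functions $(z-z_i)^{-1}$. These relations depend polynomially on $\vec z$. Together with local freeness of the sheaf of covacua, which follows from the Verlinde formula or from the TUY construction, this makes $\mathcal V^\dagger_{\vlambda,k}$ a holomorphic subbundle of the trivial $V_\vlambda^\g$-bundle. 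One also checks invariance under $\mathrm{PSL}(2,\C)$, so that it descends to $\mathbb M_n$.

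The second step computes the TUY connection. By construction, the TUY connection lifts a vector field $\partial_{z_i}$ on the base to the action of the Sugawara operator $L_{-1}$ on the $i$th factor. This is the formula $\nabla^{\rm TUY}_{\partial_{z_i}}\langle\psi|=\partial_{z_i}\langle\psi|+\langle\psi|\rho(\ell_i)$, where $\ell_i=\partial_z$ near $z_i$. Write
$$L_{-1}=\frac{1}{2(k+h)}\sum_{a}\sum_{m\in\Z}:J^a_{-1-m}J^a_{m}:,$$
with $\{J^a\}$ an orthonormal basis of $\g$. On the lowest-energy space this reduces to $\frac{1}{k+h}\sum_a J^a_{-1}J^a_0$. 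The gauge symmetry under $\g(\vec z)$ lets us replace $J^a_{-1}$ at $z_i$. We use the function $(z-z_i)^{-1}$ and the fact that a covacuum annihilates $\g\otimes(z-z_i)^{-1}$ acting diagonally. At the other points $z_j$ this function expands as $(z_j-z_i)^{-1}$ plus higher-order terms, which act by zero on the lowest-energy space. This gives
$$\langle\psi|\rho(\ell_i)\big|_{V_\vlambda}=-\frac{1}{k+h}\sum_{j\neq i}\frac{\Omega^{ij}}{z_j-z_i}\,\langle\psi|=\frac{1}{k+h}\sum_{j\neq i}\frac{\Omega^{ij}}{z_i-z_j}\,\langle\psi|.$$
This recovers $\mathbf\Omega$ in \eqref{eq:KZ}, up to the sign and dual convention relating $\nabla$ on covacua to $\nabla$ on $V^\g_\vlambda$. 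In particular, the KZ operator maps sections of the subbundle to sections of the subbundle, because it equals the intrinsically defined TUY operator there. So the subbundle is preserved and the two connections agree.

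I expect the main obstacle to be the bookkeeping in the second step. The normal ordering and the central term must be tracked so that the coefficient comes out exactly $\frac{1}{k+h}$. The relative sign convention between covacua and their restriction to $V_\vlambda^\g$ must also be fixed. A further subtlety is the descent to $\mathbb M_n$. For this I would check that the total connection one-form is basic for the $\mathrm{PSL}(2,\C)$-action. This uses $\sum_{j}\Omega^{ij}=-\Omega^{ii}$ on invariants, which is constant, so projectively nothing is lost. If exact equality is required, the Casimir shift terms $\frac{\langle\lambda_i,\lambda_i+2\rho\rangle}{2(k+h)}$ must be accounted for as in \cite{ueno2008conformal}.
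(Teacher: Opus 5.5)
Your argument is correct in structure and is essentially the standard TUY/Ueno derivation, which is all the paper relies on here (it gives no proof of its own, only \cite[Equation 2.2-16]{TUY} and \cite[Proposition 3.5.1]{ueno2008conformal}). That derivation is: propagation of vacua to embed the covacua in the $\g$-invariant dual of $V_\vlambda$, then reduction of the Sugawara operator $L_{-1}$ on the lowest-energy space via gauge invariance under $\g\otimes(z-z_i)^{-1}$.

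The sign you left open is forced, and the one you wrote down is wrong. With $t_j=z-z_j$ and $[L_{-1},X\otimes g]=-X\otimes g'$, differentiating the covacuum condition shows that $\partial_{z_i}\psi+c\,\psi\circ L_{-1}^{(i)}$ is again a covacuum only for $c=-1$. So the restriction to $V_\vlambda$ is $\d\psi-\psi\circ\mathbf\Omega$, which is the connection dual to $\nabla^{\rm KZ}$; equivalently, $\nabla^{\rm KZ}$ descends to the coinvariants. Your preservation argument goes through in this corrected form.
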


\begin{rem}
Throughout the rest of the paper the notation $\mathcal V_{\vlambda,k}^\dagger$ will also be used for the fibre of $\mathcal V_{\vlambda,k}^\dagger$ over any $\vec p\in {\mathbb M}_n$, even if this subspace inside $V_\vlambda^\g$ does depend on $\vec p$. It should be clear from the context which $\vec p$ is being considered.
\end{rem}

We further recall the isomorphism constructed in \cite{AU4} restricted to the genus zero case considered in this paper.

\begin{thm}[{\cite[Theorem 6.10]{AU4}}]
The representation of the mapping class group of a genus zero surface with $n$ marked points and fixed tangent vectors, which arrises form the Knizhnik--Zamolodchikov connection restricted to the subbundle $ \mathcal V_{\vlambda,k}^\dagger$ is equivalent to the combinatorially constructed WRT--TQFT representation for $K=\SU(N)$ at level $k$.
\end{thm}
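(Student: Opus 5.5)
This result is quoted from \cite[Theorem 6.10]{AU4}, and I would prove it by assembling the genus zero part of the Andersen--Ueno programme into an isomorphism of modular functors. Both sides then determine the mapping class group representations, so identifying the modular functors identifies the representations.

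\textbf{Step 1: the TUY side is a modular functor.} By the preceding theorem of TUY, the Knizhnik--Zamolodchikov connection preserves $\mathcal V_{\vlambda,k}^\dagger$ and induces $\nabla^{\rm TUY}$ there. This connection is flat, so parallel transport around loops in ${\mathbb M}_n$, enlarged to account for the fixed tangent vectors (a framing, producing the usual projective or central ambiguity), gives a representation of the genus zero mapping class group. The TUY factorisation and gluing isomorphisms under nodal degeneration, together with propagation of vacua, then make $\{\mathcal V_{\vlambda,k}^\dagger\}$ a (weakly) modular functor, as established in \cite{AU1, AU2, AU3}.

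\textbf{Step 2: reduce to a finite list of data.} By the axioms of a modular functor, in genus zero every representation is generated by two kinds of elementary data:
\begin{itemize}
\item the braiding (half-twist) isomorphisms on the three- and four-punctured spheres, together with the Dehn twists;
\item the fusion ($F$-) matrices obtained by comparing two pants decompositions of the four-punctured sphere.
\end{itemize}
For the KZ system these data are computed by the Drinfeld--Kohno theorem: the monodromy of $\nabla^{\rm KZ}$ on $V_\vlambda^\g$ is given by the $R$-matrix of $U_q(\g)$ at $q=e^{\pi i/(k+h)}$. Restricting to covacua corresponds to passing to the semisimple quotient category of tilting modules at this root of unity, following Kazhdan--Lusztig and Finkelberg. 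The Dehn twists act by $e^{2\pi i\Delta_\lambda}$, where $\Delta_\lambda=\langle\lambda,\lambda+2\rho\rangle/2(k+h)$, which matches the twist of the quantum group ribbon category.

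\textbf{Step 3: identify with WRT.} For $K=\SU(N)$, the combinatorial WRT--TQFT (Reshetikhin--Turaev, or skein theory) is built from the same modular tensor category of $U_q(\mathfrak{sl}_N)$ at level $k$. To conclude, I would invoke the uniqueness of modular functors with given fusion rules and compatible $R$- and $F$-data, which in type $A$ follows from Kazhdan--Wenzl rigidity. Both sides have the same fusion rules, because their dimensions are given by the Verlinde formula, and the same twist eigenvalues. Therefore the two modular functors are isomorphic, and in particular the genus zero mapping class group representations agree up to the projective ambiguity fixed by the tangent vectors.

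\textbf{Main obstacle.} The hardest step is making the Kazhdan--Lusztig/Finkelberg identification compatible with the gluing axioms of the TUY modular functor, so that it gives an actual isomorphism of modular functors and not only of individual braid representations. This requires checking that TUY sewing corresponds to the tensor structure on the quantum group side, including normalisations of the duality. I would first try to reduce everything to three- and four-point data and apply Kazhdan--Wenzl, which requires verifying rigidity and non-degeneracy of the TUY modular functor. This uses Verlinde-type nondegeneracy of the $S$-matrix.
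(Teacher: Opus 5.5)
The paper does not prove this itself but imports \cite[Theorem 6.10]{AU4}, and your outline (the TUY modular functor of \cite{AU1, AU2, AU3}, reduction of the genus zero part to braided/ribbon data on three- and four-punctured spheres, Kazhdan--Wenzl rigidity in type $A$ pinned down by the Verlinde fusion rules and the Dehn twist eigenvalues, then comparison with the skein-theoretic WRT functor) is essentially the argument of that reference. Your Drinfeld--Kohno/Kazhdan--Lusztig--Finkelberg route in Step 2 is a separate, redundant route once you use Kazhdan--Wenzl, and Drinfeld--Kohno, a statement for generic values of the coupling, does not by itself compute the monodromy on $V_{\vec\lambda}^{\mathfrak g}$ at the root of unity; also, Kazhdan--Wenzl alone fixes the monoidal category only up to the choice of $q$ and a twist by an $N$-th root of unity, so your inference ``same fusion rules and same twist eigenvalues, hence isomorphic'' still needs an argument that these data determine those parameters.
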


\section{Geometrisation of the Knizhnik--Zamolodchikov connection}

This section uses Bott--Borel--Weil theory to provide a geometrisation of the Knizhnik--Zamolodchikov connection via the real coadjoint orbit construction.

The real coadjoint orbit $\OO_\lambda \subset \mathfrak k^*$ through $\lambda$ can be identified with the quotient $\OO_\lambda\cong K/Z_\lambda$, where $Z_\lambda\subset K$ is the centraliser of $\lambda$. The Kostant--Kirillov--Souriau symplectic form on $\OO_\lambda$ is given at $\alpha \in \OO_\lambda$ by 
\[
\omega_\alpha(\underline\xi(\alpha),\underline\eta(\alpha)) \defeq \alpha([\xi,\eta])
\]
on the vector fields $\underline \xi,\underline\eta \in \XX(\OO_\lambda)$ generated by the action of Lie algebra elements $\xi,\eta \in \g.$
The character $\chi_\lambda : Z_\lambda \to U(1)$ given by $\exp(2\pi i \xi) \mapsto \exp(2\pi i \langle \xi,\lambda\rangle)$ defines the Hermitian line bundle 
\[
L_\lambda \defeq \C \times_{Z_\lambda} K \vb \OO_\lambda.
\]
There is a unique Hermitian connection $\nabla$ in $L_\lambda$ whose curvature is the Kostant--Kirillov--Souriau symplectic form $\omega$, making it a prequantum line bundle. Recall further that $\lambda$ determines a parabolic subgroup $P_\lambda \subset G$ and a corresponding flag variety $F_\lambda \defeq G/P_\lambda$. 

\begin{lemma}[{\cite[Section 7.4]{huckleberry2012infinite}}]
The natural identification $F_\lambda \cong \OO_\lambda$ induces a complex structure on $\OO_\lambda$ compatible with the symplectic form $\omega$ making it a Kähler manifold.
\end{lemma}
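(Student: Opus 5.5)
The plan is to realise the identification concretely and then check compatibility of the Kähler data pointwise, using $K$-invariance to reduce everything to the base point $\lambda$. The map $K/Z_\lambda \to G/P_\lambda$ is induced by $K \hookrightarrow G$. It is well defined because $Z_\lambda = K\cap P_\lambda$: an element of $K$ normalising the parabolic fixes $\lambda$ up to the Weyl action, and since it lies in $P_\lambda$ it fixes $\lambda$. It is injective for the same reason. It is surjective by the Iwasawa-type decomposition $G = K\cdot B \subset K\cdot P_\lambda$. Both sides are compact manifolds of the same real dimension, so it is a $K$-equivariant diffeomorphism, and it transports the complex structure of $F_\lambda$ to an integrable, $K$-invariant almost complex structure $J$ on $\OO_\lambda$.

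To show $\omega$ is of type $(1,1)$ and positive, it suffices by $K$-invariance of both $\omega$ and $J$ to check at $\lambda$. Write $\mathfrak k_\C=\g=\mathfrak t\oplus\bigoplus_{\alpha}\g_\alpha$. Then $T_\lambda\OO_\lambda\otimes\C\cong\bigoplus_{\langle\alpha,\lambda\rangle\neq0}\g_\alpha$, since $\mathfrak t$ and the roots orthogonal to $\lambda$ span the stabiliser. The holomorphic tangent space of $G/P_\lambda$ at the base point is $\g/\mathfrak p_\lambda$, which is identified with the span of the root spaces $\g_{-\alpha}$ for $\alpha\in\Phi_+$ with $\langle\alpha,\lambda\rangle\neq 0$, under the sign convention for $\mathfrak p_\lambda$ fixed by the choice of $\Phi_+$. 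The antiholomorphic part is the span of the $\g_\alpha$. For root vectors we have $\omega_\lambda(\underline{E_\alpha},\underline{E_\beta})=\lambda([E_\alpha,E_\beta])$, extended $\C$-bilinearly. This vanishes unless $\alpha+\beta=0$, because $[E_\alpha,E_\beta]\in\g_{\alpha+\beta}$ pairs trivially with $\lambda\in\mathfrak t^*$ when $\alpha+\beta\ne0$. Hence $\omega$ pairs $\g_{-\alpha}$ only with $\g_\alpha$, so it vanishes on pairs of holomorphic vectors and on pairs of antiholomorphic vectors. That is, $\omega$ is of type $(1,1)$, equivalently $J$-invariant.

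For positivity we compute $\omega(v,Jv)$ on the real vector $v=E_\alpha+\overline{E_\alpha}$ lying in $\mathfrak k$ modulo the stabiliser. Here $-i\,\omega(E_{-\alpha},\overline{E_{-\alpha}})$ is, up to a positive normalisation, $\langle\lambda,\alpha^\vee\rangle$ (or $\lambda(H_\alpha)$). This quantity is $\ge 0$ for dominant $\lambda$ and is nonzero exactly on the roots that survive. So $g=\omega(\cdot,J\cdot)$ is positive definite, with nondegeneracy coming from that of the KKS form.

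The main obstacle is sign conventions. Whether $\mathfrak p_\lambda$ contains the positive or the negative root spaces, together with the sign convention in $\omega_\alpha(\underline\xi,\underline\eta)=\alpha([\xi,\eta])$ and the factor of $i$ in identifying $\mathfrak k^*$ with $\mathfrak t^*$, decides between a positive and a negative definite metric. If the sign comes out wrong, I would replace $P_\lambda$ by the opposite parabolic; equivalently, I would use the conjugate complex structure, which is the convention implicit in \cite{huckleberry2012infinite}. That convention is also the one that makes $L_\lambda$ holomorphic with the Borel--Weil sections $V_\lambda^*$. Integrability of $J$ needs no work, since $J$ is pulled back from a complex manifold, and closedness of $\omega$ is standard for the KKS form.
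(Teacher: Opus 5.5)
The paper gives no proof of this lemma; it is taken from the cited reference. Your argument is the standard direct verification and is correct as a self-contained substitute. It has four steps: a $K$-equivariant identification $K/Z_\lambda\cong G/P_\lambda$ via $G=KB$ and $K\cap P_\lambda=Z_\lambda$; reduction to the base point by $K$-invariance of $\omega$ and $J$; type $(1,1)$ because $\lambda$ only pairs $\mathfrak g_{-\alpha}$ with $\mathfrak g_{\alpha}$; and definiteness from dominance of $\lambda$.

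What your route adds over the bare citation is transparency about conventions the paper never fixes. These are whether $\mathfrak p_\lambda$ contains the positive or the negative root spaces, the sign of the fundamental vector fields, and how $\lambda\in\mathfrak t^*$ is placed in $\mathfrak k^*$ (note the factors of $i$ in $\chi_\lambda$ and in the Bott--Borel--Weil formula). With an inconsistent choice the metric $\omega(\cdot,J\cdot)$ is negative definite. Your remark about passing to the opposite parabolic is therefore the correct reading of the lemma, not a weakness. Your argument also never uses integrality of $\lambda$.

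Three steps should be tightened.

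First, the justification of $K\cap P_\lambda=Z_\lambda$ does not work as written, since every element of $P_\lambda$ normalises $P_\lambda$. Use instead the conjugation $\tau$ of $G$ with respect to $K$, which maps $P_\lambda$ to the opposite parabolic $P_\lambda^-$. Then any $k\in K\cap P_\lambda$ lies in $P_\lambda\cap P_\lambda^-=L_\lambda=Z_G(\lambda)$, hence in $Z_\lambda$. Conversely $Z_\lambda\subseteq Z_G(\lambda)\subseteq P_\lambda$.

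Second, a smooth bijection between manifolds of equal dimension need not be a diffeomorphism. You need injectivity of the differential $\mathfrak k/\mathfrak z_\lambda\to\mathfrak g/\mathfrak p_\lambda$. This is the Lie-algebra version of the same argument, $\mathfrak k\cap\mathfrak p_\lambda\subseteq\mathfrak p_\lambda\cap\tau(\mathfrak p_\lambda)=\mathfrak l_\lambda$, and equivariance propagates it to every point.

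Third, for definiteness it is not enough that each $\langle\lambda,\alpha^\vee\rangle\geq 0$. You need the sign of $-i\,\omega(X,\overline X)$ to be the same on every root space $\mathfrak g_{-\alpha}\subset T^{1,0}$. This holds: for any $0\neq X\in\mathfrak g_{-\alpha}$, invariance of the Killing form $B$ gives $B(H,[X,\overline X])=\alpha(H)\,\bigl(-B(X,\overline X)\bigr)$ with $-B(X,\overline X)>0$. So $[X,\overline X]$ is a positive multiple of the coroot of $\alpha$ whatever the normalisation of $X$, while the convention-dependent constant relating $\lambda$ to its $\mathbb C$-linear extension does not depend on $\alpha$. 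Your pairing argument also gives that cross terms between different root spaces vanish. Together these yield definiteness on all real tangent vectors, not only on those of the form $E_\alpha+\overline{E_\alpha}$.
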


The identification $F_\lambda \cong \OO_\lambda$ will be used freely throughout the paper. The complex structure on $\OO_\lambda$ and the hermitian connection equip $L_\lambda$ with the structure of a holomorphic line bundle.

\begin{thm}[Bott--Borel--Weil {\cite[Theorem V]{bott1957homogeneous}}] \label{thm:bbw}
The space $H^0(F_\lambda,L_\lambda)$ of global holomorphic sections of $L_\lambda$ over $F_\lambda$ as a Lie algebra representation is the highest weight representation $V_\lambda$, with the Lie algebra action $\g \times H^0(F_\lambda,L_\lambda) \to H^0(F_\lambda,L_\lambda)$ given pointwise at $\overline g = gP_\lambda \in G/P_\lambda$ by 
$$(\xi \cdot s)(\overline g) = (\nabla_{\underline{\xi}_{ \overline g}} s)(\overline g) - i\lambda(\Ad_{g\inv}\xi)s(\overline g).$$
for all $s\in H^0(F_\lambda,L_\lambda)$.
\end{thm}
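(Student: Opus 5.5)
The plan is to take the classical Borel--Weil identification as known and reduce the statement to two checks. First, the displayed formula defines a representation of $\g$ on $H^0(F_\lambda,L_\lambda)$. Second, this representation is the one transported from the natural $G$-action, so its highest weight is $\lambda$.

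Concretely, realise $L_\lambda$ as $G\times_{P_\lambda}\C$, where the character $\chi_\lambda$ extends holomorphically from $Z_\lambda$ to $P_\lambda$. Its holomorphic sections are then the holomorphic functions $f\colon G\to\C$ with $f(gp)=\chi_\lambda(p)^{-1}f(g)$. On these, $G$ acts by left translation, $(g'\cdot f)(g)=f(g'^{-1}g)$. The classical theorem, for which I would cite Bott, says that this $G$-module is irreducible, with lowest weight $-\lambda$ and highest weight $\lambda$ depending on the convention for $\chi_\lambda$. I would fix the character so that the highest-weight vector is the function $g\mapsto\langle v_\lambda^*, g^{-1}v_\lambda\rangle$-type matrix coefficient. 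Applying the standard Peter--Weyl/matrix-coefficient argument then produces $V_\lambda$.

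The main work is to differentiate the left-translation action and rewrite it in terms of the prequantum connection $\nabla$. Restrict to $K$ and use the unitary trivialisation $L_\lambda\cong K\times_{Z_\lambda}\C$. A section becomes a $Z_\lambda$-equivariant function $f$ on $K$. The Hermitian connection with curvature $\omega$ is the one induced by the principal $Z_\lambda$-connection whose connection one-form is $\lambda\circ(\text{Maurer--Cartan})$ projected to $\mathfrak z_\lambda$, i.e. $\nabla_X f = X^{\rm hor}f$.

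For $\xi\in\k$, the left-invariant lift of $\underline\xi$ at $g$ is the right-invariant vector field $\xi g = g\,\Ad_{g^{-1}}\xi$. Split $\Ad_{g^{-1}}\xi$ into its $\mathfrak z_\lambda$-component and the complement. The vertical part acts on an equivariant $f$ by the infinitesimal character, giving $-i\lambda(\Ad_{g^{-1}}\xi)f(g)$ up to the normalisation of $\chi_\lambda$ and the sign convention $\exp(2\pi i\xi)$. The horizontal part gives $\nabla_{\underline\xi}f$. This yields the displayed formula for $\xi\in\k$. It extends complex-linearly to $\g$, with $\underline\xi$ read as the holomorphic vector field, since the action on holomorphic sections is by holomorphic differential operators.

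One also checks directly that the operator preserves holomorphicity. The $(0,1)$-part of $\nabla$ is $\bar\partial$, and the vector fields $\underline\xi$ are holomorphic, so $\bar\partial$ commutes with the operator. The bracket relation follows automatically from its being the derivative of a group action. Alternatively, it is Kostant's formula $\xi\mapsto\nabla_{X_{H_\xi}}-iH_\xi$, with moment map $H_\xi(\alpha)=\alpha(\xi)$. It satisfies the bracket relation because $\omega$ is the curvature and $\{H_\xi,H_\eta\}=H_{[\xi,\eta]}$.

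I expect the main obstacle to be sign and normalisation bookkeeping. Three conventions must be made consistent: that of $\chi_\lambda$ (the factor $2\pi$), the curvature normalisation $F_\nabla=\omega$ versus $-i\omega$, and the identification of $\lambda\in\mathfrak t^*$ with $\k^*$ via $\langle\cdot,\cdot\rangle$. Only with these fixed does the weight come out as $\lambda$ rather than $-\lambda$ or $w_0\lambda$. I would fix them by evaluating on $\xi\in\mathfrak t$ at the base point $\overline e$ for the highest-weight section: there the $\nabla$-term vanishes on the $T$-fixed point, leaving the eigenvalue $-i\lambda(\xi)$.
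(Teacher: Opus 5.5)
Your proposal is correct in outline, and it does more than the paper. The paper gives no argument for this statement beyond the citation of Bott, so both the identification $H^0(F_\lambda,L_\lambda)\cong V_\lambda$ and the explicit infinitesimal formula are taken as part of the cited result. You cite only the classical Borel--Weil identification and derive the formula yourself:
\begin{itemize}
\item write sections as $Z_\lambda$-equivariant functions on $K$ and differentiate left translation;
\item split the generating field $g\,\mathrm{Ad}_{g^{-1}}\xi$ into its horizontal part, which gives $\nabla_{\underline\xi}$;
\item and its $\mathfrak z_\lambda$-part, which gives the multiplication term because $\lambda$ annihilates the complement $\mathfrak m$.
\end{itemize}
This is the standard derivation of Kostant's prequantisation formula on $\mathcal O_\lambda$.

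The citation buys brevity. Your derivation supplies exactly what the paper uses later in \Cref{prop:2ODO}, where $\lambda(\mathrm{Ad}_{g^{-1}}\nu)$ is rewritten as the moment-map component $\alpha_i(\nu)$. It also exposes what the statement leaves implicit. First, $g$ must be a $K$-representative of $\overline g$: for general $g\in G$, $\lambda(\mathrm{Ad}_{g^{-1}}\xi)$ is not invariant under the unipotent radical of $P_\lambda$, so it does not descend to $G/P_\lambda$. Second, whether the formula is a homomorphism or an anti-homomorphism, and whether one gets $V_\lambda$ or $V_\lambda^*$, depends on the signs of $\underline\xi$, $\chi_\lambda$ and the curvature. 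The paper itself does not fix these consistently: it uses curvature $\omega$ in one section and $-i\omega$ in another.

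Two small repairs are needed.
\begin{itemize}
\item Your side remark that the operator preserves holomorphicity because $\underline\xi$ is holomorphic is not a valid reason. We have $[\nabla_{\overline W},\nabla_{\underline\xi}]=\nabla_{[\overline W,\underline\xi]}+F_\nabla(\overline W,\underline\xi)$, and the curvature term is generally nonzero. It is cancelled only by $\overline W$ hitting the multiplication term, via $dH_\xi=\pm\iota_{\underline\xi}\omega$ with the correctly matched sign. You do not need the remark anyway, since the operator is the derivative of the holomorphic $G$-action.
\item In the base-point check, do not presuppose that the highest-weight section is the one not vanishing at $\overline e$. Instead, note that evaluation at $\overline e$ is a nonzero $P_\lambda$-eigenfunctional. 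So the weight vector not vanishing there is extremal, and your formula reads off its weight as $\pm\lambda$. Dominance of $\lambda$ then decides between $V_\lambda$ and $V_\lambda^*$.
\end{itemize}
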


Defining $F_\vlambda \defeq F_{\lambda_1} \times \dots \times F_{\lambda_n}$ and $L_\vlambda \defeq L_{\lambda_1} \boxtimes \cdots \boxtimes L_{\lambda_n}$ it is immediate that
\[
V_\vlambda \cong H^0(F_{\lambda_1},L_{\lambda_1}) \otimes \cdots \otimes H^0(F_{\lambda_n},L_{\lambda_n}) \cong H^0(F_\vlambda,L_\vlambda)
\]
as $\g$-representations and therefore $V_\vlambda^{\g} \cong H^0(F_\vlambda,L_\vlambda)^{\g}$.

We now consider the GIT quotient $\FF_\vlambda \defeq (F_\vlambda \git G)$.
If the line bundle $L_\vlambda \vb F_\vlambda$ descends to a line bundle $\L_\vlambda$ over $\FF_\vlambda$, then by the Kempf--Ness Theorem, the GIT quotient of the cross product of the coadjoint orbits can be identified with the symplectic reduction $F_\vlambda\git G \cong \OO_\vlambda {\symp}_{\!\mu} K$ along the canonical moment map 
$$\mu : \OO_\vlambda \to \mathfrak k^* : (\alpha_1,\dots,\alpha_n) \mapsto \sum_i \alpha_i,$$
where again $\OO_\vlambda \defeq \OO_{\lambda_1} \times \cdots \times \OO_{\lambda_n}$. This gives the following isomorphisms:
\begin{equation} \label{eq:gkz}
V_\vlambda^{\g} \cong H^0(F_\vlambda,L_\vlambda)^{\g}
\cong H^0(\FF_\vlambda,\L_\vlambda).
\end{equation}

We will for now assume the line bundle descends and return to this condition later in the paper. In these cases we get the above geometric realisation of the invariant subspace $V^{\g}_\vlambda \subseteq V_\vlambda$ in terms of holomorphic sections of the line bundle $\L_\vlambda$ over $\FF_\vlambda$, which will makes it possible to give a geometric model in terms of differential operators acting on holomorphic sections of $\L_\vlambda$ over $\FF_\vlambda$ for the operator $\mathbf \Omega$ and thereby giving a geometrisation of the Knizhnik--Zamolodchikov connection. To this end we introduce an orthonormal basis $B$ of $\mathfrak k$.

\begin{thm} \label{prop:2ODO}
On the space of $G$-invariant sections $H^0(F_\vlambda,L_\vlambda)^{\mathfrak g}$ the action of $\Omega^{ij}$ is given by the following second order differential operator acting on sections of $\L_\vlambda$ over $\FF_\vlambda$
\begin{equation}
\Omega^{ij} = \left(\left(\sum_{\nu \in B} \nabla_{X_\nu^i} \nabla_{X_\nu^j} - \nabla_{[\pi \underline\nu^i,\pi^\bot \underline\nu^j]}\right) - i\nabla_{\underline{\alpha_i}^j + \underline{\alpha_j}^i} - \langle \alpha_i,\alpha_j\rangle\right),
\end{equation}
where $X_\nu^i$, $[\pi\underline\nu^i,\pi^\bot\underline\nu^j]$, and $\alpha_i^j$ are $K$-equivariant vector fields that are tangent to level sets of the moment map $\mu : \OO_\vlambda \to \k^*$ and thus descends to $\FF_\vlambda$ just as the functions $\langle \alpha_i,\alpha_j\rangle$ does.
\end{thm}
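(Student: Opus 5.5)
The plan is to start from the Bott--Borel--Weil formula of \Cref{thm:bbw}, applied factorwise on $F_\vlambda$, and compute $\Omega^{ij}$ directly. Since $B$ is orthonormal for the invariant inner product, the Casimir is $\Omega=\sum_{\nu\in B}\nu\otimes\nu$ (up to the sign coming from $\g=\k\otimes\C$ and the convention $\xi\mapsto i\xi$; this sign is fixed once and carried through). Hence $\Omega^{ij}s=\sum_\nu \nu^{(i)}\cdot(\nu^{(j)}\cdot s)$, where $\nu^{(i)}$ acts on the $i$th factor. Write $\underline\nu^i$ for the fundamental vector field of $\nu$ on $F_{\lambda_i}$, lifted to $F_\vlambda$. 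At a point $(\alpha_1,\dots,\alpha_n)\in\OO_\vlambda$ the multiplication term in \Cref{thm:bbw} is $-i\alpha_i(\nu)$, because $\lambda(\Ad_{g^{-1}}\nu)=(\Ad^*_g\lambda)(\nu)=\alpha_i(\nu)$. Composing the two operators gives
\[
\sum_\nu\Bigl(\nabla_{\underline\nu^i}\nabla_{\underline\nu^j}-i\,\underline\nu^i(\alpha_j(\nu))-i\alpha_j(\nu)\nabla_{\underline\nu^i}-i\alpha_i(\nu)\nabla_{\underline\nu^j}-\alpha_i(\nu)\alpha_j(\nu)\Bigr).
\]
For $i\neq j$ the derivative $\underline\nu^i(\alpha_j(\nu))$ vanishes. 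The last term sums to $\langle\alpha_i,\alpha_j\rangle$. The first-order terms combine into $-i\nabla_{\underline{\alpha_i}^j+\underline{\alpha_j}^i}$, where $\underline{\alpha_i}^j\defeq\sum_\nu\alpha_i(\nu)\underline\nu^j$ is the vector field on the $j$th factor generated by $\alpha_i$, identified with an element of $\k$ through the inner product. These are the last two terms of the statement.

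Next I would show that each term descends to $\FF_\vlambda\cong\OO_\vlambda\symp_{\!\mu}K$. The functions $\langle\alpha_i,\alpha_j\rangle$ are $K$-invariant by invariance of the inner product. For the vector fields, $K$-equivariance follows because $\alpha\mapsto\alpha_i$ is equivariant and $\sum_\nu\nu\otimes\nu$ is $\Ad$-invariant. The more delicate point is tangency to $\mu^{-1}(0)$, since $\underline{\alpha_i}^j$ on its own does not preserve the level set. To handle this, use the decomposition $T\OO_\vlambda|_{\mu^{-1}(0)}=H\oplus(\k\text{-orbit directions})\oplus J(\k\text{-orbit directions})$. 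Let $\pi$ be the projection onto the horizontal part, orthogonal to the orbit directions and their $J$-images, and $\pi^\bot=1-\pi$. On invariant sections, derivatives along the orbit directions $\underline\xi$ (diagonal) act by the multiplication term $i\mu(\xi)$, which vanishes on $\mu^{-1}(0)$. Derivatives along $J\underline\xi$ are then determined by holomorphicity ($\nabla^{0,1}s=0$). So on invariant holomorphic sections, $\nabla_{\pi^\bot Y}$ reduces to zeroth-order terms, or terms computable from the moment map. I would then write $\underline\nu^i=\pi\underline\nu^i+\pi^\bot\underline\nu^j$ and expand $\nabla_{\underline\nu^i}\nabla_{\underline\nu^j}$. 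This produces $\nabla_{X^i_\nu}\nabla_{X^j_\nu}$ with $X^i_\nu\defeq\pi\underline\nu^i$, plus cross terms. The term with $\pi^\bot$ on the left vanishes on invariant holomorphic sections once the relevant multiplications are accounted for. The term $\nabla_{\pi\underline\nu^i}\nabla_{\pi^\bot\underline\nu^j}$ is rewritten using the commutator, giving the correction $-\nabla_{[\pi\underline\nu^i,\pi^\bot\underline\nu^j]}$ plus a term with $\pi^\bot$ leftmost, which again vanishes. The curvature term $\omega(\pi\underline\nu^i,\pi^\bot\underline\nu^j)$ drops after summing over $\nu$, or is absorbed into the $\langle\alpha_i,\alpha_j\rangle$ bookkeeping.

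The main obstacle is this last step. One must verify precisely that, after summing over the orthonormal basis, the $\pi^\bot$ pieces of the first-order terms and the curvature contributions either cancel or combine into the stated $-i\nabla_{\underline{\alpha_i}^j+\underline{\alpha_j}^i}$ term, and that this combination, together with $[\pi\underline\nu^i,\pi^\bot\underline\nu^j]$, is tangent to $\mu^{-1}(0)$ and $K$-equivariant. My first approach is to work at a point of $\mu^{-1}(0)$ and choose $B$ adapted to the stabiliser. Then $\sum_\nu$-invariance lets me replace each sum by a contraction of $\Ad$-invariant tensors, after which the needed identities become the vanishing of $\sum_i\alpha_i$.
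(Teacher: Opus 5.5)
Your opening computation is the same as the paper's. Composing the Bott--Borel--Weil operators and using $i\neq j$ gives $\sum_{\nu}\nabla_{\underline\nu^i}\nabla_{\underline\nu^j}-i\nabla_{\underline{\alpha_i}^j+\underline{\alpha_j}^i}-\langle\alpha_i,\alpha_j\rangle$. After that, your treatment of the second-order term has a genuine error: you apply the vanishing rule for invariant sections in the wrong order.

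For a $\g$-invariant holomorphic $s$, a derivative along a complex-orbit direction $Y\in\underline{\k}\oplus J\underline{\k}$ gives $\nabla_Y s=c\,s$ with $c$ vanishing on $\mu^{-1}(0)$. So a composite $\nabla_W\nabla_Y s$ vanishes on $\mu^{-1}(0)$ when the orbit derivative acts \emph{first} and $W$ is tangent to the level sets of $\mu$. An orbit derivative applied \emph{after} a horizontal one does not reduce to a multiplication, because the horizontal derivative of $s$ is in general neither invariant nor holomorphic.

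In your labelling ($\pi$ horizontal), $\nabla_{\pi\underline\nu^i}\nabla_{\pi^\bot\underline\nu^j}s$ is therefore already zero on $\mu^{-1}(0)$. The term you discard, $\nabla_{\pi^\bot\underline\nu^i}\nabla_{\pi\underline\nu^j}s$, is exactly the one that must be commuted. Commuting it yields the bracket of the orbit part of $\underline\nu^i$ with the horizontal part of $\underline\nu^j$. It also yields a curvature term, which vanishes pointwise rather than after summation: the $G$-orbit is $J$-invariant and the horizontal space is its metric complement, so $\omega$ pairs them to zero. Your computation thus keeps a bracket (horizontal part of $\underline\nu^i$ with orbit part of $\underline\nu^j$) that is not there, and loses the one that is.

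The paper does exactly this correct bookkeeping, but with the opposite labelling. There $\pi$ is the projection onto the diagonal $G$-orbit and $\pi^\bot$ the horizontal projection, so the statement's $[\pi\underline\nu^i,\pi^\bot\underline\nu^j]$ means orbit-then-horizontal. Having guessed the other convention, you arranged the commutator step to reproduce the literal symbol, which with your $\pi$ denotes a different vector field. For the orbit--orbit term, the correct rule leaves at most a zeroth-order factor on $\mu^{-1}(0)$, since $J\underline{\k}$ is transverse to $\mu^{-1}(0)$; the paper discards it by asserting $\nabla_{\pi X}s=0$. Your caution about ``multiplications'' is relevant there, but it cannot repair the mixed terms.

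The rest of your plan either targets the wrong place or is left open. No projection of the first-order part is needed. The derivative of $\langle\mu,\xi\rangle$ along $\underline{\alpha_i}^j+\underline{\alpha_j}^i$ is $-\langle\alpha_j,\ad^*_\xi\alpha_i\rangle-\langle\alpha_i,\ad^*_\xi\alpha_j\rangle=0$ by $\ad$-invariance, which is how the paper disposes of it.

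The real descent issue is the leading term. The horizontal projections of $\underline\nu^i$ are tangent to the level sets, since $\omega(\underline\g,\pi^\bot\,\cdot\,)=0$ in the paper's notation. They are not $K$-equivariant for fixed $\nu$, however, because $k_*\underline\nu^i=\underline{\Ad_k\nu}^i$; your equivariance argument only covered $\underline{\alpha_i}^j$. The paper redefines $X^i_\nu=\pi^\bot\underline{\Ad_{e^\mu}\nu}^i$ and checks that the summed operator is unchanged, using $\Lie_{\pi^\bot\underline\nu^i}\mu=0$ and orthogonality of $\Ad$. It also checks separately that the bracket field preserves $\mu$, using invariance of $\omega$ and $\omega(\underline\g,\pi^\bot\,\cdot\,)=0$.

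Your closing idea is that only the summed operator must be $K$-invariant, which follows from $\Ad$-invariance of $\sum_\nu\nu\otimes\nu$ and $K$-equivariance of the projections. That would give descent of the operator. As written, though, the proposal leaves both this and the tangency of the bracket term as an unresolved ``main obstacle''.
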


\begin{proof}
It suffices to show that the differential operators $\Omega^{ij}$ for $i \neq j$ are equal on $G$-invariant sections to ones defined in terms of functions and vector fields on the level set of the moment map that are $G$-equivariant. Write the infinitesimal coadjoint $\xi$-action on the $i$th component as $\underline \xi^i(\vec\alpha) = (0,\dots,0,-\ad_\xi^*\alpha_i,0,\dots,0) \in \mathfrak X(\OO_\vlambda)$ and similarly write $\underline\alpha^i = \underline \xi^i$ if $\alpha = \xi^\vee = \langle \xi,-\rangle \in \k^*$ is dual to $\xi$. Then at $\vec \alpha \in \OO_\vlambda$ the operator is given by
\begin{align}
\Omega^{ij} &= \sum_{\nu \in B} (\nabla_{\underline\nu^i} - i\alpha_i(\nu))(\nabla_{\underline\nu^j} - i\alpha_j(\nu)) \label{eq:geomkz}\\
\nonumber &= \sum_{\nu \in B} \nabla_{\underline\nu^i}\nabla_{\underline\nu^j} - i\left(\sum_{\nu \in B} \alpha_i(\nu) \nabla_{\underline\nu^j} + \sum_{\nu \in B}\alpha_j(\nu)\nabla_{\underline\nu^i}\right) - \sum_{\nu \in B}
\alpha_i(\nu)\alpha_j(\nu)\\
\nonumber &= \sum_{\nu \in B} \nabla_{\underline\nu^i}\nabla_{\underline\nu^j} - i\nabla_{\underline{\alpha_i}^j + \underline{\alpha_j}^i} - \langle \alpha_i,\alpha_j\rangle,
\end{align}
where the last equality uses that the $\nu$ are an orthonormal basis for $\g$ and $\nabla_{\underline\nu^j}$ commutes with $\alpha_i(\nu)$ because $\underline\nu^j$ is a vector field on the $j$th component while $\alpha_i$ is a function on the $i$th component and $i \neq j$ by assumption.

The functions $\valpha \mapsto \langle \alpha_i,\alpha_j\rangle$ and the vector fields $\underline{\alpha_i}^j + \underline{\alpha_j}^i$ are manifestly $K$-invariant and $(\underline{\alpha_i}^j + \underline{\alpha_j}^i)\mu(\xi) = -\alpha_j(\ad_\xi\alpha_i^\vee) - \alpha_i(\ad_\xi\alpha_j^\vee) = -\langle \alpha_j,\ad_\xi^*\alpha_i\rangle - \langle \alpha_i,\ad_\xi^*\alpha_j\rangle = 0$ by symmetry and $\ad$-invariance of $\langle \cdot,\cdot\rangle$.

That leaves only the first term. Write $\pi : TF_\vlambda \to TF_\vlambda$ for the tangential projection on the diagonal $G$-orbit and $\pi^\bot = \id - \pi$ for the orthogonal projection under the Kähler metric on $F_\vlambda$. Then $G$-invariant sections $s$ satisfy $\nabla_{\pi X}s = 0$ for any vector field $X$. As such, 
\[
\sum_{\nu \in B} \nabla_{\underline\nu^i} \nabla_{\underline\nu^j}s &= \sum_{\nu \in B} \nabla_{\pi \underline\nu^i + \pi^\bot \underline\nu^i} \nabla_{\pi \underline\nu^j + \pi^\bot \underline\nu^j} s = \sum_{\nu \in B} (\nabla_{\pi^\bot \underline\nu^i}\nabla_{\pi^\bot \underline\nu^j} + \nabla_{\pi \underline\nu^i}\nabla_{\pi^\bot \underline\nu^j})s \\
&= \sum_{\nu \in B} \left(\nabla_{\pi^\bot \underline\nu^i}\nabla_{\pi^\bot \underline\nu^j} + \nabla_{\pi^\bot \underline\nu^j}\nabla_{\pi \underline\nu^i} - \nabla_{[\pi \underline\nu^i,\pi^\bot \underline\nu^j]} - i\omega(\pi \underline\nu^i,\pi^\bot \underline\nu^j)\right)s \\
&= \sum_{\nu \in B} \left(\nabla_{\pi^\bot \underline\nu^i}\nabla_{\pi^\bot \underline\nu^j} - \nabla_{[\pi \underline\nu^i,\pi^\bot \underline\nu^j]}\right)s,
\]
where the symplectic term vanishes because the coadjoint orbits are Kähler with respect to the canonical structures \cite[8.112]{besse2007einstein} and the complex structure is preserved by the diagonal $\g$-action.

Both terms factor through the Casimir element $\sum_{\nu \in B} \nu \otimes \nu$, which means they are independent of choice of orthonormal basis $B$. It follows that the vector field $\sum_{\nu \in B} [\pi \underline\nu^i,\pi^\bot \underline\nu^j]$ is $K$-equivariant, because the adjoint action is orthogonal. In addition it preserves $\mu$ since for any $\eta \in \k$
\[
\Lie_{[\pi^\bot \underline\nu^j,\pi \underline\nu^i]}\mu(\eta) &= \omega(\underline\eta, [\pi^\bot \underline\nu^j,\pi \underline\nu^i]) = - \Lie_{\pi\underline\nu^i}\omega(\underline\eta,\pi^\bot\underline\nu^j) - \omega([\pi\underline\nu^i,\underline\eta],\pi^\bot\underline\nu^j) + (\Lie_{\pi\underline\nu^i}\omega)(\eta,\pi^\bot\underline\nu^j)\\
&= -\Lie_{\pi\underline\nu^i}0 - \omega(\pi\underline{(\ad_\eta\nu)}^i,\pi^\bot\underline\nu^j) + 0 = 0,
\]
using that Kostant--Kirillov--Souriau form is invariant and that $\omega(\underline{\g},\pi^\bot\underline\nu^j) = 0$ since $\imag\pi^\bot$ is by definition orthogonal to the diagonal $\g$-orbit with respect to the metric defined by $\omega$.

This leaves only the final term $\nabla_{\pi^\bot \underline\nu^i}\nabla_{\pi^\bot \underline\nu^j}$ to be presented in terms of $K$-equivariant vector fields that are tangent to the level sets of the moment map. The vector fields $\pi^\bot\underline\nu^i$ do not satisfy these criteria, but the operator $\nabla_{\pi^\bot \underline\nu^i}\nabla_{\pi^\bot \underline\nu^j}$ is equal to one defined in terms of vector fields that obey these conditions when acting on $G$-invariant sections. Define $e^\mu \defeq \exp(\langle \mu,-\rangle) : \OO_\vlambda \to K$ to be the exponential of Lie algebra element dual to the moment map under the Killing form. Then define the vector fields
\[
X_\nu^i \defeq \pi^\bot\underline{\Ad_{e^\mu}\nu}^i,
\]
which is manifestly $K$-equivariant using the equivariance of $\mu$. This vector field preserves $\mu$ because
\[
\Lie_{X_\nu^i} \mu(\eta) = \omega(\underline\eta,\pi^\bot\underline{(\Ad_{\exp^\mu}\nu)^i}) = 0
\]
by the same reasoning as above. Moreover,
\[
\sum_{\nu \in B}\nabla_{X_\nu^i}\nabla_{X_\nu^j} &= \sum_{\nu \in B}\nabla_{\pi^\bot\underline{\Ad_{e^{\mu}}\nu}^i}\nabla_{\pi^\bot\underline{\Ad_{e^\mu}\nu}^j} = \sum_{\nu ,\beta \in B} \nabla_{\pi^\bot\underline{(\Ad_{\exp^\mu}\nu)^i}}\nabla_{\pi^\bot\langle{\Ad_{e^\mu}\nu,\beta\rangle}\underline\beta^j} \\
&= \sum_{\nu,\beta \in B} \nabla_{\pi^\bot\underline{\Ad_{e^{\mu}}\nu}^i}\langle\Ad_{e^\mu}\nu,\beta\rangle\nabla_{\pi^\bot\underline\beta^j} \\
&= \sum_{\nu,\beta \in B} \langle\Ad_{e^\mu}\nu,\beta\rangle\nabla_{\pi^\bot\underline{\Ad_{e^{\mu}}\nu}^i}\nabla_{\pi^\bot\underline\beta^j} + \Lie_{\pi^\bot\underline{\Ad_{e^{\mu}}\nu}^i}\langle\Ad_{e^\mu}\nu,\beta\rangle\nabla_{\pi^\bot \underline\beta^j} \\
&= \sum_{\beta \in B} \nabla_{\pi^\bot\underline\beta^i} \nabla_{\pi^\bot\underline\beta^j} + 0 = \sum_{\nu \in B} \nabla_{\pi^\bot\underline\nu^i} \nabla_{\pi^\bot\underline\nu^j},
\]
because $\Lie_{\pi^\bot\underline\nu^i} \mu = 0$ and the adjoint action on $\g$ is orthonormal.

Therefore on $G$-invariant sections $s \in H^0(F_\vlambda,L_\vlambda)^\g$, the operator has the presentation
\[
\Omega^{ij}s &= \sum_{\nu \in B} (\nabla_{\underline\nu^i} - i\alpha_i(\nu))(\nabla_{\underline\nu^j} - i\alpha_j(\nu))s = \left(\left(\sum_{\nu \in B} \nabla_{X_\nu^i} \nabla_{X_\nu^j} - \nabla_{[\pi \underline\nu^i,\pi^\bot \underline\nu^j]}\right) - i\nabla_{\underline{\alpha_i}^j + \underline{\alpha_j}^i} - \langle \alpha_i,\alpha_j\rangle\right)s
\]
in terms of $G$-invariant vector fields that preserve the moment map. It follows that it descends to a second-order differential operator on $\OO_\vlambda \symp_\mu K \cong F_\vlambda \git G$.
\end{proof}

\section{The moduli space and stack of parabolic bundles}

We want to identify the space vacua $ \mathcal V_{\vlambda,k}^\dagger$, given as a sub-space of global holomorphic section of $\L_\vlambda$ over $\FF_\vlambda$ with the space of holomorphic sections of the determinant line bundle over the moduli space of semi-stable bundles. We refer to \cite{seshadri1969moduli} for the general notions of (semi)-stable parabolic bundles over marked Riemann surfaces. We only need the genus zero case here. Consider $\vec p \in C_n(\CP)$. 

\begin{definition}\index{Moduli space of parabolic vector bundles}
For a tuple $\vmu = (\mu_1,\dots,\mu_n) \in (\mathfrak t^*)^{\times n}$, denote by $\mathscr M_{\vmu}$ the moduli space of semi-stable parabolic vector bundles over $\CP$ with parabolic structure over the marked points $\vec p$ with weights and parabolic types given by $\vmu$. For dominant integral $k$-admissible weights $\vlambda \in \Lambda_k^{\times n}$ and an integer $k$, we will use the notation $\MM_{\vlambda,k} \defeq \MM_{\vmu}$ for $\vmu = \vlambda/k$.
\end{definition}

It will be useful to note that the codimension of the singular locus grows linearly with the number of parabolic points when the weights are regular. 

\begin{lemma} \label{lem:codimspace}
Let $\vlambda \in \Lambda_k^{\times n}$ be an $n$-tuple of $k$-admissible weights and assume that each $\lambda_i$ is regular. If the moduli space $\MM_{\vlambda,k}$ contains stable points, then the codimension of the locus of bundles that admit a destabilising parabolic reduction to a parabolic subgroup $P \subset G$ is at least $\frac 12(n-2)(\dim G - \dim P) - \dim Z(P)$. In particular, the codimension of the singular locus is arbitrarily large for sufficiently large $n$.
\end{lemma}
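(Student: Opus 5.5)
The plan is a dimension count on the parabolic stack, stratified by the type of a destabilising reduction, in the style of the Harder--Narasimhan/Atiyah--Bott codimension estimates. First fix the ambient dimension. Since every $\lambda_i$ is regular, the parabolic type at each marked point is a Borel subgroup $B$. The stack of parabolic $G$-bundles on $\CP$ with $n$ marked points then has dimension
\[
-\dim G + n\dim(G/B) = -\dim G + \tfrac n2(\dim G - \operatorname{rk} G).
\]
This holds because $\mathrm{Bun}_G(\CP)$ has dimension $(g-1)\dim G = -\dim G$, and each flag variety $G/B$ contributes its dimension. Because $\MM_{\vlambda,k}$ contains stable points, and stable points have finite automorphism group modulo the centre, the semistable locus is nonempty of this expected dimension. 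The codimension estimate can therefore be carried out on the stack and then transferred to the moduli space.

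Next, fix a parabolic $P\subset G$ with Levi $L$, together with relative positions of the flags with respect to the reduction at each point. These relative positions are given by Weyl group double coset data $w_i\in W_P\backslash W$. The locus of bundles admitting a destabilising reduction of this type is the image of the stack of parabolic $P$-bundles with that data, and its dimension is bounded by
\[
-\chi(\mathfrak p\text{-part}) + \sum_i \dim\bigl(P\text{-orbit of the flag in } G/B\bigr).
\]
Comparing with the ambient count, the codimension equals
\[
h^1(\mathfrak g/\mathfrak p)-h^0(\mathfrak g/\mathfrak p) + \sum_i \operatorname{codim}\bigl(P w_i B/B\subset G/B\bigr),
\]
computed with the parabolic (quasi-parabolic) version of the bundle $\mathfrak g/\mathfrak p$. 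By Riemann--Roch this is $-\chi$ of the parabolic $\mathfrak g/\mathfrak p$, namely $-\deg - \dim(G/P) + (\text{flag corrections})$.

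The destabilising condition says that the parabolic degree of the reduction, measured by characters of $P$ (i.e.\ elements of $Z(L)$, hence of $Z(P)$), is positive. I will use this to absorb the degree term. There is a tradeoff: making the underlying degree contribution favourable forces the flags $w_i$ into small $P$-orbits, and each such choice costs codimension in $G/B$. The generic orbit of $P$ on $G/B$ has codimension $0$, but any nongeneric position that helps destabilise costs roughly half of $\dim G/P$ on average. I would make this precise by pairing opposite positions $w$ and $w_0 w$, since
\[
\operatorname{codim}(PwB)+\operatorname{codim}(Pw_0wB)\ge \dim(G/P).
\]
Summing over the points, this yields $\tfrac n2\dim(G/P)$ minus the contribution $\dim(G/P)$ of the $\chi$ term. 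The leftover choice of a degree class in a lattice of rank $\dim Z(P)$ accounts for the subtracted $\dim Z(P)$, and the result is $\tfrac12(n-2)(\dim G-\dim P)-\dim Z(P)$.

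The main obstacle is exactly this balancing step: extracting a uniform bound of one half of $\dim G/P$ per marked point from the destabilising inequality. I would first attempt it by an averaging argument over the weights, using that $\vlambda/k$ lies in the alcove and that the parabolic degree is linear in the positions $w_i$. The final statement then follows because the singular locus is contained in the union over the finitely many proper parabolics $P$ (up to conjugacy) of these strictly semistable loci. Each of these has codimension growing linearly in $n$, since $\dim G-\dim P\ge 1$ while $\dim Z(P)\le \operatorname{rk}G$ is bounded.
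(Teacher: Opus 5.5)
\textbf{Gap.} The balancing step you single out is not merely hard: as you set it up on the stack, it is false. On the stack, the locus of semistable parabolic bundles admitting a destabilising reduction can have codimension $1$ for every $n$.

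Take $G=\mathrm{SL}(2)$, $k=n$, $\lambda_1=\lambda_2=(n-2)\varpi$ and $\lambda_j=2\varpi$ for $j\geq 3$. The parabolic weights are then $a_1=a_2=\tfrac12-\tfrac1n$ and $a_j=\tfrac1n$. These weights are regular and admissible, $\sum_i\lambda_i$ lies in the root lattice, and generic flags on the trivial bundle are stable for $n\geq 5$. On the trivial bundle, the constant line through $\ell_1=\ell_2$ has parabolic degree $a_1+a_2-\sum_{j\geq 3}a_j=0$. So the whole divisor $\{\ell_1=\ell_2\}\subset[(\mathbb P^1)^n/\mathrm{SL}(2)]$ is strictly semistable. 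In your notation, $h^1-h^0=-1$ and exactly two flags lie in the closed $B$-orbit, so the codimension is $1$. Because the weights can sit near both walls of the alcove, the destabilising equality never forces half the points into special position.

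Similarly, take all $\lambda_i=2\varpi$. The stratum with underlying bundle $\mathcal O(1)\oplus\mathcal O(-1)$ and generic flags has codimension $h^1(\operatorname{ad})=1$. It is strictly semistable, since $\mathcal O(1)$ has parabolic degree $1-n\cdot\tfrac1n=0$. Here the degree alone destabilises, and no flag pays any codimension. Note also that on semistable bundles a destabilising reduction has parabolic degree exactly $0$, not positive.

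Finally, passing from stack to moduli space only yields $\operatorname{codim}_{\mathscr M}\geq\operatorname{codim}_{\mathfrak M}-(\text{stabiliser dimension})$, which is vacuous in these examples.

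\textbf{Missing idea.} What makes the codimension large in $\mathscr M_{\vec\lambda,k}$ is S-equivalence. Your argument never uses it, and it is the key input of the paper's proof. By Ramanathan's criterion, a strictly semistable point is determined by the associated graded $p_*\sigma^*E$ of a destabilising reduction $\sigma$ to $P$, that is, by a parabolic bundle for the Levi with inherited weights. In the first example, the map to the moduli space contracts the entire divisor to the single polystable point $\ell_1=\ell_2\neq\ell_3=\dots=\ell_n$, whose codimension is $n-3$.

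The paper therefore bounds the dimension of the locus by that of the moduli of parabolic $P$-bundles with inherited weights, which is largest when these weights are regular. It applies Bhosle's genus-zero formula $\dim Z(H)-\dim H+\sum_i\dim H/P_i$ to $H=G$ and $H=P$, using $\dim G/P_{\lambda_i}=\tfrac12(\dim G-r)$ and $\dim P/P_{\mu_i}\leq\tfrac12(\dim P-r)$. Subtracting gives $\tfrac12(n-2)(\dim G-\dim P)-\dim Z(P)$ directly, with no flag-position bookkeeping. The $-2$ comes from $-\dim G+\dim P$. The $\dim Z$ term is the generic-stabiliser term of that formula, not a discrete choice of degree class.
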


\begin{proof}
Two strictly semistable parabolic G-bundles $E_1$ and $E_2$ with destabilising parabolic reductions $\sigma_1 : \CP \to E_1/P$ and $\sigma_2 : \CP \to E_2/P$ are S-equivalent if and only if the parabolic reductions $\sigma_1^*E_1$ and $\sigma_2^*E_2$ have isomorphic projections $\Gr(E_1) \defeq p_*\sigma_1^*E_1 \cong p_*\sigma_2^*E_2 \eqdef \Gr(E_2)$, where $p : P \to L$ is the projection to the Levi subgroup (see e.g. \cite[Proposition 3.12]{ramanathan1996moduli}).

Therefore any two semistable parabolic $G$-bundles destabilised by parabolic reductions to $P \subset G$ whose parabolic reductions are isomorphic are necessarily S-equivalent. Thus the dimension of the locus of strictly semistable parabolic $G$-bundles destabilised by a parabolic reduction to a proper parabolic subgroup $P \subset G$ is upper bounded by the dimension of the moduli of parabolic $P$-bundles with inherited weights. The dimension of the moduli space of parabolic $P$-bundles with inherited weights is maximal if the inherited weights are likewise regular in $P$.

By \cite[Theorem II]{bhosle1989moduli}, for any complex reductive group $H$ the dimension of the smooth locus of the moduli space of parabolic $H$-bundles in genus 0 is equal to $\dim Z(H) + (g-1)\dim H + \sum_i \dim H/P_i$. By the assumption of regularity, $\dim G/P_{\lambda_i} = \frac 12(\dim G - r)$ and $\dim P/P_{\mu_i} \leq \frac 12 (\dim P - r)$ for $r = \rk G = \rk P$, since $P \subset G$ contains the Borel subgroup, which contains a maximal torus.

Then the codimension is bounded from below by
\[
\dim \mathcal{M}_G - \dim \mathcal{M}_P &\geq \dim Z(G) - \dim G + \frac 12\sum_i (\dim G - r) - \dim Z(P) + \dim P - \frac 12\sum_i (\dim P - r) \\
&= \frac 12(n-2)(\dim G - \dim P) - \dim Z(P),
\]
which is uniformly unbounded in $n$ for all proper parabolic subgroups.
\end{proof}

We need the stack point of view on holomorphic $G$ bundles in order to establish the required results on Picard groups and to have the tools to setup the basis for our the main theorem. We follow the setup and notation of \cite{laszlo1997line} and \cite{teleman2003parabolic} closely. Let $P_i$ be parabolic subgroups $i=1, \ldots, n$ and let $\vec P = (P_1, \ldots, P_n)$.

Denote by $\M_{\vec P} = \M_{\vec P}(\CP,\vec p)$ \emph{the moduli stack of quasi-parabolic bundles of type $\vec P$ over $(\CP,\vec p)$}. For weights $\vmu$ denote likewise the moduli stack of parabolic bundles as $\M_{\vmu}$. Moreover, the \emph{moduli stack of $\vmu$-semi-stable bundles} $\M_{\vmu}^{\rm ss} \subset \M_{\vmu}$, is the substack consisting of quasi-parabolic bundles that are semi-stable parabolic bundles when equipped with weights $\vmu$. We correspondingly use the notation $\M_{\vlambda,k} \defeq \M_{\vmu}$ for $\vmu = \vlambda/k$.

\begin{thm}[{\cite[Theorem 2.7]{teleman2003parabolic}}]
The moduli space of parabolic bundles is a moduli space for the stack of semi-stable parabolic bundles and the open subspace of stable parabolic bundles is a moduli space for the stack of stable parabolic bundles.
\end{thm}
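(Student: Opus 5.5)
Since this is a cited result, the plan is to reduce it to the general theory of good moduli spaces for quotient stacks and GIT, following Teleman and Ramanathan.

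\textbf{Step 1: presentation as a GIT quotient.} Realise the semi-stable locus as a quotient stack. Fix the points $\vec p$ and choose a sufficiently large bound on the splitting type; in genus zero, Grothendieck's theorem gives that every $G$-bundle is a reduction of the trivial bundle twisted by a cocharacter. Then exhibit a smooth quasi-projective parameter scheme $R$ with an action of a reductive group $\Gamma$. We can take $R$ to be a parameter space of framed bundles with flags at $\vec p$, namely a Quot-type scheme times the product $\prod_i G/P_i$ of flag varieties. With this choice, $\M_{\vmu}^{\rm ss} \cong [R^{\rm ss}/\Gamma]$, where $R^{\rm ss}$ is the GIT semistable locus for a linearisation determined by $\vmu$.

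\textbf{Step 2: matching the two stability notions.} Show that Hilbert--Mumford GIT semi-stability on $R$ coincides with parabolic semi-stability for the weights $\vmu$. This is done by computing the numerical function $\mu(x,\rho)$ for one-parameter subgroups $\rho$ and identifying it with the parabolic degree of the associated reduction. The linearisation on the flag factors must be taken proportional to $\vlambda$, equivalently $\vmu = \vlambda/k$.

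\textbf{Step 3: the moduli statement.} Apply the standard fact that $[R^{\rm ss}/\Gamma] \to R\git\Gamma$ is a good moduli space. Concretely, it is universal for maps to algebraic spaces, and its closed points correspond to closed orbits, which are the S-equivalence classes. Then identify $R\git\Gamma$ with $\mathscr M_{\vmu}$. On the stable locus the stabilisers are finite modulo the centre, so orbits are closed, and $R^{\rm s}/\Gamma$ is a geometric quotient. This gives the second assertion.

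The main obstacle is Step 2: making the Hilbert--Mumford computation match parabolic degrees for all parabolic reductions, not just for the one-parameter subgroups of $\Gamma$. The first attempt would be the usual route of reducing a general $G$ to vector bundles via a faithful representation, with the rationality of $\vmu$ handled by scaling by $k$.
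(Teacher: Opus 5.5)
The paper gives no argument for this statement. It imports it from Teleman--Woodward, so your outline has to be judged on its own. The architecture is the standard one: a bounded family, a presentation $\mathfrak M^{\rm ss}_{\vec\mu}\cong[R^{\rm ss}/\Gamma]$, then the formal properties of good quotients. Steps 1 and 3 are fine in outline, modulo Step 2. One slip in Step 3: you infer closed orbits from finite stabilisers, but finite stabilisers do not force an orbit to be closed in $R^{\rm ss}$. What you need is that parabolic stability coincides with GIT stability (strict Hilbert--Mumford inequalities). Equivalently, a stable bundle admits no nontrivial degeneration to a graded object of the same parabolic slope.

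The real gap is Step 2, which carries all the content, and the route you suggest for it does not work as stated.

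\emph{The direct Hilbert--Mumford comparison.} One-parameter subgroups of $\Gamma=\mathrm{GL}(N)$ only see filtrations of the associated vector bundle $E(V)$. Most of these do not come from reductions of $E$ to parabolic subgroups of $G$, so $\mu(x,\rho)$ cannot be identified directly with parabolic degrees of reductions. Instead one compares semistability of $E$ with that of $E(V)$. This is Ramanathan's device, and it uses that the scheme of $G$-reductions is affine over the vector-bundle parameter scheme, since $\mathrm{GL}(V)/G$ is affine.

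\emph{The faithful-representation route.} In the parabolic setting the comparison of $E$ with $E(V)$ fails naively. At $p_i$ the induced weights on $E(V)$ are the numbers $\langle\chi,\mu_i\rangle$, with $\chi$ running over the weights of $V$. These can spread over an interval of length greater than $1$. For example, for $G=E_8$ the smallest faithful representation is the adjoint one, and the weights fill $[-\langle\theta,\mu_i\rangle,\langle\theta,\mu_i\rangle]$. This interval has length greater than $1$ as soon as $\langle\theta,\lambda_i\rangle>k/2$. Then $E\times_G V$ with the induced flags is not a parabolic vector bundle in the Mehta--Seshadri sense. Making it one requires Hecke modifications at the $p_i$, which change the underlying bundle, so the transfer of semistability has to be proved from scratch. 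Scaling the linearisation by $k$ does not address this.

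\emph{How to fix it.} The standard remedy is to pass to a Galois cover of $\mathbb{P}^1$ ramified over $\vec p$ with ramification indices divisible by $k$; this is where the rationality of $\vec\mu=\vec\lambda/k$ really enters. On the cover, parabolic $G$-bundles become equivariant $G$-bundles. Parabolic semistability becomes ordinary semistability, because the Harder--Narasimhan reduction is unique and hence invariant. One then runs Ramanathan's construction equivariantly. Alternatively, take the existence of the good quotient from Bhosle--Ramanathan and keep only your Step 3.
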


Let $LG$ be the loop group of $G$ and let $L^+G$ be the subgroup of loops extending over the formal disk and consider 
\[
\mathfrak Q \defeq LG/L^+G.
\]
as defined in \cite[3.6]{laszlo1997line}. The identity in $LG$ gives a base point $1\in \mathfrak Q$.

Set $X_{\vec p} = \CP - \vec p$ and let $G[X_{\vec p}]$ denote the group of algebraic maps from $X_{\vec p}$ to $G$.

\begin{thm}[{
\cite[Theorem 8.5]{laszlo1997line}}] \label{thm:uniformisation}
The stack of (quasi-)parabolic $G$-bundles of type $\vec P$ over $(\CP,\vec p)$ can be realised as
\[
\M_{\vec P} \cong G[X_{\vec p}] \backslash (\mathfrak Q \times F_\vlambda).
\]
Moreover, the quotient map induces an isomorphism of Picard groups giving an identification
$$\Pic(\M_{\vec P} ) \cong \Z \times \prod_{i=1}^n\mathcal X(P_{i}),$$ 
where $\mathcal X(P_{i})$ denotes the character group of $P_{i}$.
\end{thm}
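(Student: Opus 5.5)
This is the uniformisation theorem of Laszlo--Sorger, specialised to genus zero. I would prove it in two stages: first the presentation of the stack as a double quotient, then the computation of the Picard group by descending line bundles along the quotient map.

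\textbf{Presentation of the stack.} The starting point is the uniformisation theorem for non-parabolic bundles, as in Beauville--Laszlo and Drinfeld--Simpson. For a point $x \in \CP$ with formal disk $D_x$, it gives
\[
\M_G \cong G[\CP - x]\backslash \mathfrak Q .
\]
A point of $\mathfrak Q$ encodes a $G$-bundle that is trivialised on $\CP - x$ and on $D_x$, with the loop group element serving as the gluing datum. In genus zero every $G$-bundle is trivial on the affine curve $X_{\vec p}$; by Drinfeld--Simpson this holds locally in families as well. So I would glue at the points of $\vec p$, using the product of the affine Grassmannians there. One can then reduce to a single copy of $\mathfrak Q$ at one point, because on $X_{\vec p}$ the bundle can be trivialised compatibly with the other disks.

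A quasi-parabolic structure adds, at each $p_i$, a reduction of the fibre $E_{p_i}$ to $P_i$. After trivialising the bundle this becomes a point of $G/P_i$. The result is a $G[X_{\vec p}]$-equivariant fibration over $\mathfrak Q$ with fibre $\prod_i G/P_i = F_\vlambda$. Here $G[X_{\vec p}]$ acts on $F_\vlambda$ by evaluation at the marked points and on $\mathfrak Q$ through the Laurent expansion. To identify the two groupoids I would use faithfully flat descent in families over affine schemes.

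\textbf{Picard group.} Pullback along $\mathfrak Q \times F_\vlambda \to \M_{\vec P}$ identifies $\Pic(\M_{\vec P})$ with the group of $G[X_{\vec p}]$-linearised line bundles on $\mathfrak Q \times F_\vlambda$. The computation then goes in four steps.
\begin{enumerate}
\item The Picard group of $\mathfrak Q$ is $\Z$, generated by the determinant bundle, and $\Pic(G/P_i) = \mathcal X(P_i)$. The space $\mathfrak Q$ is an ind-variety that is an increasing union of projective Schubert varieties, and its reduced structure is simply connected. This yields the K\"unneth-type formula
\[
\Pic(\mathfrak Q \times F_\vlambda) = \Z \times \prod_i \mathcal X(P_i).
\]
\item Every line bundle on $\mathfrak Q \times F_\vlambda$ admits a $G[X_{\vec p}]$-linearisation. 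This is automatic for the $F_\vlambda$ factors, since they are $G$-homogeneous. For the $\Z$ factor one uses that the central extension of $LG$ splits over $G[X_{\vec p}]$, which holds in genus zero because of the residue theorem.
\item Any two linearisations of the same bundle differ by a character of $G[X_{\vec p}]$.
\item The group $G[X_{\vec p}]$ has no nontrivial characters: it is integral and connected and is generated by unipotent subgroups (ind-reduced), as $G$ is simple and simply connected.
\end{enumerate}
Together these give injectivity and surjectivity of the pullback map.

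\textbf{Main obstacle.} I expect the hardest part to be the technical ind-scheme issues. These include establishing that $\mathfrak Q$ is reduced and integral with $\Pic = \Z$, and proving that $G[X_{\vec p}]$ has no characters, as Laszlo--Sorger do. I would cite those results rather than reprove them.
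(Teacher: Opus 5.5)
The paper gives no proof here; it cites Laszlo--Sorger. Your Picard-group half is essentially their argument. However, the presentation step, as you set it up, is not well-defined.

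\textbf{The gap.} With $X_{\vec p}=\CP-\vec p$, an element of $G[X_{\vec p}]$ is not defined at the $p_i$, so ``$G[X_{\vec p}]$ acts on $F_\vlambda$ by evaluation at the marked points'' has no meaning.

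Suppose you trivialise on $X_{\vec p}$ and glue at all the $p_i$. A flag in $E_{p_i}$ becomes a point of $G/P_i$ only after you choose a trivialisation on the formal disc at $p_i$, and that choice is determined only up to $L^+G$. What you actually get is $G[X_{\vec p}]\backslash\prod_i LG/\mathcal P_i$, with parahorics $\mathcal P_i=\mathrm{ev}_{p_i}^{-1}(P_i)\subset L^+G$. This is not a product with $F_\vlambda$.

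Your reduction to a single $\mathfrak Q$ at, say, $p_1$ does not rescue this. It changes the group to $G[\CP-p_1]$. It also still leaves the flag at $p_1$ as the fibration $LG/\mathcal P_1\to\mathfrak Q$, and that fibration is not trivial. The reason is that the $G$-torsor of frames at the gluing point is non-trivial over $\mathfrak Q$. Already for $\mathrm{SL}_2$, the associated rank-two bundle restricts to $\mathcal O(-1)\oplus\mathcal O(1)$ on a projective line of lattices in $\mathfrak Q$. So no choice of gluing point among the $p_i$ gives $\mathfrak Q\times F_\vlambda$ with an evaluation action.

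\textbf{The repair.} This is the Laszlo--Sorger setup, and it is how the notation $G[X_{\vec p}]$ in the statement has to be read:
\begin{enumerate}
\item Fix an auxiliary point $p_0\in\CP$ distinct from all the $p_i$.
\item Trivialise families on $(\CP-p_0)\times S$ \'etale-locally, by Drinfeld--Simpson.
\item Glue only at $p_0$.
\item Let $G[\CP-p_0]$ act on $\mathfrak Q=\mathfrak Q_{p_0}$ by Laurent expansion at $p_0$, and on $F_\vlambda$ by evaluation at the points $p_i\in\CP-p_0$.
\end{enumerate}
In the comparison with conformal blocks, $p_0$ carries the vacuum weight.

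\textbf{Smaller corrections to the Picard steps.} With the repair, your four steps go through as in Laszlo--Sorger, subject to the following.
\begin{enumerate}
\item $\Pic(\mathfrak Q)$ is generated by $\mathcal O_{\mathfrak Q}(1)$. The determinant bundle of a representation $V$ is $\mathcal O_{\mathfrak Q}(d_V)$, where $d_V$ is the Dynkin index, and $d_V$ is not $1$ in general (e.g.\ for $G_2$ or $E_8$).
\item The central extension splits over $G[\CP-p_0]$ by the residue theorem in every genus; genus zero plays no role.
\item The K\"unneth formula rests on integrality of $\mathfrak Q$ (a genuine theorem for simply connected $G$) together with $H^1(F_\vlambda,\mathcal O)=0$, so that the seesaw principle applies. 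Simple connectedness is not the relevant input.
\item Your step 3 silently uses $H^0(\mathfrak Q\times F_\vlambda,\mathcal O^*)=\C^*$.
\item For step 4, Laszlo--Sorger deduce that characters are trivial from integrality of the ind-group plus perfectness of $\mathfrak g\otimes\C[\CP-p_0]$. This avoids having to prove that the group is generated by unipotent subgroups.
\end{enumerate}
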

For $(\ell,\vmu) \in \Z \times \prod_{i=1}^n \mathcal X(P_{\lambda_i})$ the corresponding line bundle is denoted by $\mathfrak L_{(\vmu,\ell)}$. 
In order to applying the metaplectic-corrected construction of the Hitchin connection we need to know when a square root of the canonical bundle exist and identify it.

\begin{corollary} \label{cor:metaplec}
The canonical bundle over the moduli stack $\M_B$ is $\mathfrak K = \mathfrak L_{(-2\vec\rho,-2h)}$, thus it admits a square root
\[
\delta = \mathfrak L_{-\vec\rho,-h},
\]
where $\vec\rho = (\rho,\dots,\rho) \in \mathcal X(B)^{\times n}$ and $\rho = \frac 12 \sum_{\alpha \in \Phi^+} \alpha$ is the Weyl vector.
\end{corollary}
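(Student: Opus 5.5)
We will compute the canonical bundle of $\M_B$ through the uniformisation of \Cref{thm:uniformisation}, $\M_B \cong G[X_{\vec p}]\backslash(\mathfrak Q \times F_B^{\times n})$ with $F_B = G/B$ the full flag variety. Since the quotient map induces an isomorphism of Picard groups, it suffices to identify the pullback of $\mathfrak K$ to $\mathfrak Q \times (G/B)^{\times n}$ as a $G[X_{\vec p}]$-linearised line bundle, and then read off its class in $\Z \times \prod_i \mathcal X(B)$. The canonical bundle of the quotient stack is the descent of the determinant of the complex
\[
\mathrm{Lie}\,G[X_{\vec p}] \otimes \mathcal O \longrightarrow T(\mathfrak Q \times (G/B)^{\times n}),
\]
which is the tangent complex of $\M_B$. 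Equivalently, at a point $(E,\text{flags})$ the tangent complex is $R\Gamma(\CP, \mathrm{ad}_{\vec B}E)[1]$, where $\mathrm{ad}_{\vec B}E$ denotes the sheaf of infinitesimal automorphisms preserving the flags. So $\mathfrak K = \det R\Gamma(\CP,\mathrm{ad}_{\vec B}E)$ up to the sign convention, which is what we compute.

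The computation splits into two pieces via the exact sequence
\[
0 \to \mathrm{ad}_{\vec B}E \to \mathrm{ad}E \to \bigoplus_i (\mathrm{ad}E)_{p_i}/\mathfrak b_{E,p_i} \to 0.
\]
First, $\det R\Gamma(\CP,\mathrm{ad}E)$ is the pullback from $\M_G$ of the canonical bundle of $\M_G$. By the known computation of Laszlo--Sorger and Kumar--Narasimhan, this pullback is $\mathfrak L^{-2h}$, i.e.\ it contributes $-2h$ in the $\Z$ factor; this uses that the adjoint representation has Dynkin index $2h$ with our normalisation $\langle\theta,\theta\rangle = 2$. Second, each skyscraper term $(\mathrm{ad}E)_{p_i}/\mathfrak b$ restricted to the $i$th flag variety is the tangent bundle of $G/B$. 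Its determinant is the anticanonical bundle of $G/B$, i.e.\ the homogeneous line bundle attached to the character $2\rho$ of $B$ (the sum of positive roots, i.e.\ the sum of the weights of $\mathfrak g/\mathfrak b$). With the sign conventions of the dual, this contributes $-2\rho$ in each $\mathcal X(B)$ factor. Multiplicativity of determinants in the exact sequence then gives $\mathfrak K = \mathfrak L_{(-2\vec\rho,-2h)}$.

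The square root $\delta = \mathfrak L_{(-\vec\rho,-h)}$ then exists because $\rho$ is an integral weight, hence a character of $B$ (using that $G$ is simply connected). Moreover, $\Pic(\M_B)$ is the free group $\Z\times\prod\mathcal X(B)$ by \Cref{thm:uniformisation}, so the tensor square of $\delta$ is exactly $\mathfrak K$.

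The main obstacle is the sign and normalisation bookkeeping. One must check that the generator of the $\Z$ factor in \Cref{thm:uniformisation} is the positive generator of $\Pic(\mathfrak Q)$, matching the convention in which $\omega_{\M_G} = \mathfrak L^{-2h}$. One must also check that the convention identifying characters of $B$ with line bundles on $G/B$ is the one under which the anticanonical bundle corresponds to $+2\rho$, so that $\mathfrak K$ gets $-2\rho$ rather than $+2\rho$. I would fix these by comparing with the Bott--Borel--Weil convention of \Cref{thm:bbw}, where $L_\lambda$ has sections $V_\lambda$, so positivity corresponds to dominant weights.
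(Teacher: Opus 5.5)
Your argument is correct, but it is organised differently from the paper's.

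\textbf{The paper's route.} It works directly on the atlas $\mathfrak Q\times (G/B)^{\times n}$ of the uniformisation theorem and reads off the canonical bundle factor by factor. It cites Kumar--Narasimhan for the fact that the canonical bundle of $\mathfrak Q$ corresponds to $(-2h,0,\dots,0)$, and uses $K_{G/B}\cong\bigotimes_{\alpha\in\Phi^+}L_{-\alpha}=L_{-2\rho}$. It then transports the result through the isomorphism of Picard groups.

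\textbf{Your route.} You compute $\mathfrak K=\det R\Gamma(\CP,\mathrm{ad}_{\vec B}E)$ intrinsically from the tangent complex. You split it with $0\to\mathrm{ad}_{\vec B}E\to\mathrm{ad}E\to\bigoplus_i(\mathrm{ad}E)_{p_i}/\mathfrak b\to 0$ into the pullback of $K_{\M_G}=\mathfrak L^{-2h}$ and the relative canonical bundles of the flag fibrations over $\M_G$.

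\textbf{What is shared.} The numerical inputs are the same: $2h$ as the Dynkin index of $\mathfrak g$, and $2\rho$ as the weight of $\det(\mathfrak g/\mathfrak b)$, with the sign fixed by the Bott--Borel--Weil convention. The square-root step is identical: integrality of $\rho$ plus torsion-freeness of $\Pic(\M_B)$.

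\textbf{What each approach buys.} Your route justifies the splitting of the stack's canonical bundle while staying finite-dimensional. The paper leans on a citation for the canonical bundle of the ind-scheme $\mathfrak Q$. It also tacitly uses that the canonical bundle of the stack pulls back to that of the atlas. That holds up to the character $\det\mathrm{Lie}\,G[X_{\vec p}]$, which is harmless because $G[X_{\vec p}]$ has no nontrivial characters, but the paper does not say so. Your determinant of $R\Gamma$ of a coherent sheaf on $\CP$ avoids both issues. For the same reason, your opening description via $\mathrm{Lie}\,G[X_{\vec p}]\otimes\mathcal O\to T(\mathfrak Q\times(G/B)^{\times n})$ should be read only as motivation, since both terms have infinite rank.

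\textbf{One step to state explicitly.} To match your decomposition with the paper's splitting $\Z\times\prod_i\mathcal X(B)$, note that on the atlas the fibres $E_{p_i}$ are trivialised. Hence $E_{p_i}\times_B\C_\chi$ pulls back to $\mathcal O\boxtimes L_\chi$ on the $i$th factor. Anything pulled back from $\M_G$ lives purely in the $\Z$ factor, with unique linearisation.

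Overall, the paper's proof is shorter given its citation. Yours is more self-contained, modulo the also-cited formula $K_{\M_G}=\mathfrak L^{-2h}$.
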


\begin{proof}
By \Cref{thm:uniformisation} this follows from the fact that the canonical line bundle of $\mathfrak Q$ corresponds to the element $(-2h,0,\dots,0)\in \Z \times \prod_{i=1}^n \mathcal X(B) \cong \Pic(\M_B)$ \cite[Theorem 5.4]{kumar1997infinite} and the fact that the canonical bundle of $F = {G}/B$ corresponds to $-2\rho$ in $\X(B)$ since 
\[
K_F \cong \bigotimes_{\alpha \in \Phi^+} L_{-\alpha} = L_{-2\rho}.
\]
As $\rho \in \mathcal X(B)$, we see that $(-h,-\vrho) \in \Pic(\M_B)$ defines the unique square root $\delta$ of the canonical bundle.
\end{proof}

We consider the inclusion $\iota : \M^{\rm ss}_{\vlambda,k} \into \M_{P_\vlambda}$.

\begin{prop}
Let $\vlambda,\vmu$ be $n$-tuples of integral dominant weights. If each $\lambda_i$ is regular and $n$ is large enough then $\MM_{\vlambda,k}$ has stable bundles. If moreover the character $\sum_i\mu_i$ defines the trivial representation $Z(G) \to \C^\times$, then the line bundle $\iota^*\mathfrak L_{\vmu,\ell}$ descends to a line bundle $\mathscr L_{\vmu, \ell}$ over $ \mathscr M_{\vlambda,k}$.
\end{prop}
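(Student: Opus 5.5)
Two things need proving: existence of stable points, and descent of the line bundle.

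\emph{Existence of stable bundles.} The plan is to use the symplectic description of $\MM_{\vlambda,k}$. By Mehta--Seshadri (and its $G$-version), $\MM_{\vlambda,k}$ is the moduli space of flat $K$-connections on $\CP - \vec p$ whose holonomies lie in the conjugacy classes $C_i = K\cdot \exp(2\pi i\lambda_i/k)$. Stable points correspond to irreducible representations, meaning those whose stabiliser is the centre. Regularity of $\lambda_i$ (interpreted in the interior of the alcove where needed) makes each $C_i$ a full flag manifold $K/T$ of dimension $\dim K - r$. The representation variety is the fibre over $1$ of the product map $C_1\times\cdots\times C_n \to K$, $(g_i)\mapsto g_1\cdots g_n$. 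Once $n \geq 3$ or so, this product map is surjective and submersive at generic points, because products of regular conjugacy classes fill out $K$.

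A dimension count then finishes the argument. The reducible locus consists of representations into proper subgroups $H\subset K$ (centralisers of a torus element). As in \Cref{lem:codimspace}, its dimension is at most $\sum_i \dim(H/T) - \dim H + \dim Z(H) + \dim(K/H)$. This falls below $\sum_i \dim(K/T) - \dim K$ once $n$ is large. Hence the irreducible locus is nonempty, and it is Zariski open in the nonempty semistable moduli space. The step I expect to be the main obstacle is nonemptiness of the fibre together with the generic submersion property. Here I would use that the fibre is nonempty for $n$ large by the multiplicative Horn (quantum saturation) inequalities, which become vacuous for many regular classes.

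\emph{Descent of the line bundle.} I would use Kempf's descent lemma, through Teleman's identification of $\MM_{\vlambda,k}$ as a good moduli space for $\M^{\rm ss}_{\vlambda,k}$ (the theorem cited above). A line bundle on the semistable stack descends to the good moduli space if and only if, for every closed point, the stabiliser (automorphism group of the polystable object) acts trivially on the fibre. At a stable point the automorphism group is $Z(G)$. It acts on the fibre of $\mathfrak L_{\vmu,\ell}$ by the character $\sum_i\mu_i$ restricted to $Z(G)$: the $\ell$-part (the central charge on $\mathfrak Q$) is trivial on constant central loops, and each flag factor contributes $\mu_i|_{Z(G)}$. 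The hypothesis therefore gives triviality at stable points.

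For strictly polystable points, the stabiliser is a reductive group $\mathrm{Aut}(\Gr E)$ containing a torus coming from the Levi. By \Cref{lem:codimspace}, for $n$ large these points form a locus of codimension at least $2$ in a normal variety (normal because a good quotient of a smooth stack is normal). So I would first descend $\iota^*\mathfrak L_{\vmu,\ell}$ over the stable locus. I would then extend it as a reflexive rank-one sheaf and argue that it is a line bundle. Alternatively, a large enough power descends by Kempf, and that power pins down the extension. Since $\MM$ is locally factorial by Drezet--Narasimhan / Laszlo--Sorger type arguments, the reflexive sheaf is invertible. I expect this codimension/extension step to be the delicate part, and I would lean on the local factoriality result rather than checking stabiliser characters directly.
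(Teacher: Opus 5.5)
Your argument on the stable locus is the same as the paper's: Kempf's descent lemma \cite[Théorème 2.3]{drezet1989groupe}, with stabiliser $Z(G)$ acting on the fibre through $\sum_i\mu_i$. The paper's proof says nothing about existence of stable points, so that part of your sketch is additional. It is fine in outline, with two corrections: ``surjective once $n\geq 3$'' is wrong in general, since how large $n$ must be depends on $k$; and you need the \emph{strict} multiplicative Horn inequalities to produce a point with finite stabiliser.

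The genuine gap is the extension across strictly polystable points. Your claim that $\MM_{\vlambda,k}$ is locally factorial there is false in general, and no codimension bound rescues it. At a polystable point reduced to a Levi $L$, the stabiliser contains $Z(L)^0$. This torus acts on the fibre of $\mathfrak L_{\vmu,\ell}$ by a character that is generally nonzero, and a nonzero torus character is not torsion. So neither $\mathfrak L_{\vmu,\ell}$ nor any power of it satisfies Kempf's criterion there, and that criterion is an ``if and only if''. Your ``large power'' alternative fails for the same reason.

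Here is a concrete example. Take $G=\SL_2$, $k=2$, $n=2m$ and all $\lambda_i=\varpi$. The trivial bundle with $m$ flags equal to a line $\ell$ and $m$ flags equal to $\ell'$ is strictly polystable. Its stabiliser is $\{\mathrm{diag}(t,t^{-1})\}$ in the basis $(\ell,\ell')$. This acts on the fibre of $\mathfrak L_{\vmu,\ell}$, which restricts to $L_\vmu$ on $\{1\}\times F_\vlambda$, with weight $\pm(\sum_{i\in S}\mu_i-\sum_{i\notin S}\mu_i)$, where $S$ indexes the flags equal to $\ell$. For $\vmu=(2\varpi,0,\dots,0)$ the weight is $\pm 2$, even though $\sum_i\mu_i=\alpha$ is trivial on $Z(\SL_2)$. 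These points are isolated in the $(n-3)$-dimensional $\MM_{\vlambda,k}$. For $m$ large, the strictly semistable locus of the smooth stack $\M^{\rm ss}_{\vlambda,k}$ also has codimension at least $2$.

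Now suppose the reflexive extension of the bundle descended from the stable locus were invertible near such a point. Its pullback would agree with $\mathfrak L_{\vmu,\ell}$ off codimension two on a smooth stack, hence everywhere. Then the stabiliser would act trivially, a contradiction. So the extension is a non-Cartier divisorial sheaf; the local model is a cone over $\mathbb P^{m-2}\times\mathbb P^{m-2}$.

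Drezet--Narasimhan's local factoriality for non-parabolic $\SL_r$ works because every line bundle on that stack is a power of the theta bundle, whose weights at polystable points vanish. Here $\Pic\cong\Z\times\prod_i\mathcal X(P_i)$ is much larger, so that argument does not transfer. Your proof therefore needs an extra hypothesis: either no strictly semistable points, or vanishing $Z(L)^0$-weights at polystable points. The latter holds for $(\vmu,\ell)$ proportional to $(\vlambda,k)$, since there the weight is the parabolic degree of the reduction, which is zero.

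The paper's proof has the same hole. It passes from the stable locus to all of $\MM_{\vlambda,k}$ via \Cref{lem:codimspace} and Hartogs' Theorem, but Hartogs extends sections, not line bundles.
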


\begin{proof}
By \cite[Théorème 2.3]{drezet1989groupe}, a linearised line bundle descends if and only if the stabiliser of each point acts trivially on the fibre over that point. By \Cref{lem:codimspace} and Hartogs' Theorem it is sufficient for large enough $n$ that the line bundle descends over the stable bundles, whose stabiliser is the centre $Z(G)$. The character $\sum_i\mu_i$ gives precisely the linearised action of $Z(G)$ on $\mathfrak L_{\vmu,\ell}$, which by hypothesis is trivial.
\end{proof}

In the specific case of $G = \SL(r)$, a stronger result can be found in \cite[Théorème~3.3]{pauly1996espaces}, without the assumption on the size of $n$ or the regularity of $\vlambda$.

\begin{lemma} \label{lem:evenchern}
If $n$ is even and the vector of weights $\vmu \in \Lambda_k^{\times n}$ are all regular, then the moduli space $\MM_{\vmu}$ admits a square root of its canonical bundle.
\end{lemma}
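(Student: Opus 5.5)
The plan is to build the square root on the moduli stack and push it down to $\MM_{\vmu}$ using the descent criterion from the preceding Proposition. The canonical bundle has to be identified on the stack with the correct parabolic type. Since every $\mu_i$ is regular, $P_{\mu_i} = B$ for each $i$, so the relevant stack is $\M_B$. By \Cref{cor:metaplec} its canonical bundle is $\mathfrak K = \mathfrak L_{(-2\vec\rho,-2h)}$, with the candidate square root $\delta = \mathfrak L_{(-\vec\rho,-h)}$.

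The first step is to check that $\iota^*\mathfrak K$ descends to the canonical bundle $K_{\MM_{\vmu}}$ on the stable locus. On $\M^{\rm s}_{\vmu}$ the stack is a $Z(G)$-gerbe over the smooth stable locus of $\MM_{\vmu}$. Because $Z(G)$ is finite, the cotangent complex of the stack agrees with the pullback of the cotangent bundle of the coarse space. Hence $\iota^*\mathfrak K$ is the pullback of $K_{\MM^{\rm s}_{\vmu}}$.

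By \Cref{lem:codimspace}, for regular weights the strictly semistable (singular) locus has codimension at least $2$ once $n$ is moderately large. By Hartogs, it therefore suffices to work on the stable locus and extend. I expect this codimension input to be available in the regime considered, and I would state the needed lower bound on $n$ if it is not automatic.

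The main step, and the one I expect to be the obstacle, is showing that $\delta$ descends, i.e.\ that $Z(G)$ acts trivially on its fibres. By the proof of the preceding Proposition this reduces to checking that the character $\sum_i(-\rho)$ restricted to $Z(G)$ is trivial; one also has to check that the level part $-h$ contributes nothing to the $Z(G)$-action on $\mathfrak Q$, which I will argue from the uniformisation. The sum is $-n\rho$, and $2\rho = \sum_{\alpha\in\Phi^+}\alpha$ lies in the root lattice, so it is trivial on $Z(G)$. If $n$ is even, then $n\rho = \frac n2 \cdot 2\rho$ is in the root lattice, and $Z(G)$, which acts trivially on every root, acts trivially. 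This is exactly where the parity hypothesis enters: for odd $n$ the character $\rho|_{Z(G)}$ can be nontrivial, for instance for $\SL(2)$.

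With both facts in place, $\delta$ descends to a line bundle $\mathscr L_{-\vec\rho,-h}$ on the stable locus, and its square is the descent of $\mathfrak K$, namely $K_{\MM_{\vmu}}$. Extending over the codimension $\geq 2$ singular locus by Hartogs (as reflexive rank one sheaves on the normal variety $\MM_{\vmu}$) gives the required square root on all of $\MM_{\vmu}$.
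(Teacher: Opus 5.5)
Your proposal follows the paper's proof: you take the square root $\delta=\mathfrak L_{(-\vec\rho,-h)}$ of the canonical bundle of $\mathfrak M_B$ from Corollary~\ref{cor:metaplec}, and you descend it by noting that $Z(G)$ acts through the character $n\rho=\frac n2\cdot 2\rho$, which is trivial for even $n$ because $2\rho$ is a sum of roots. Your extra steps make explicit what the paper takes tacitly from the preceding Proposition: the gerbe argument matching the canonical bundles on the stable locus, using that $2\rho$ lies in the root lattice directly rather than the paper's shaky inference from $2n\rho$, and the Hartogs extension, which needs $n$ large via Lemma~\ref{lem:codimspace} and so carries the same largeness assumption on $n$ as that Proposition.
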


\begin{proof}
The flag type of regular weights is the maximal flag type and by \Cref{cor:metaplec}, the moduli stack of parabolic bundles for the maximal flag type admits the unique square root $\mathfrak L_{-h,-\vrho} = \mathfrak K^{1/2}$ of its canonical bundle. The canonical bundle of the moduli space pulls back to the canonical bundle of the moduli stack, so it follows that $2\sum_{i=1}^n \rho = 2n\rho : Z(G) \to \C^\times$ is trivial, irrespective of $n$. As such, $2\rho$ defines a trivial representation $Z(G) \to \C^\times$ and therefore $\mathfrak L_{-h,-\vrho}$ descends to the moduli space whenever $n$ is even.
\end{proof}

\section{The needed isomorphisms and injections}\label{iso}

In this section we establish all the required isomorphism and injections between (sub-)spaces of global holomorphic sections of various holomorphic line bundles over various symplectic quotients, moduli spaces and stacks.

The following theorem related the spaces of holomorphic sections over moduli space of parabolic bundles and over the stack of parabolic bundles.

\begin{thm}[{\cite[Theorem 9.6]{teleman2000quantization}}] \label{thm:stackisspace}
For any $\vlambda \in \Lambda_k^{\times n}$ for which $\mathfrak L_{\vlambda,k}$ desends to $\mathscr M_{\vlambda,k}$ we have that
\[
H^0(\M_{\vlambda,k},\mathfrak L_{\vlambda,k}) \cong H^0(\mathscr M_{\vlambda,k}, \mathscr L_{\vlambda,k})
\]
and for $i > 0$
\[
H^i(\M_{\vlambda,k},\mathfrak L_{\vlambda,k}) \cong H^i(\mathscr M_{\vlambda,k}, \mathscr L_{\vlambda,k}) = 0.
\]
\end{thm}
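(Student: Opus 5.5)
The plan has three steps: identify sections on the stack with invariant sections, compare the stack with the moduli space through the good moduli map, and then get vanishing from Kodaira-type arguments.

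\textbf{Sections on the stack as invariant sections.} By \Cref{thm:uniformisation}, $\M_{\vlambda,k}$ is the quotient stack $G[X_{\vec p}]\backslash(\mathfrak Q\times F_\vlambda)$. Hence $H^i(\M_{\vlambda,k},\mathfrak L_{\vlambda,k})$ is the $G[X_{\vec p}]$-equivariant cohomology of the pulled-back line bundle on $\mathfrak Q\times F_\vlambda$. On $\mathfrak Q$ we use the Kumar--Mathieu Borel--Weil theorem: $H^0(\mathfrak Q,\mathfrak L_\ell)$ is the dual of the level-$\ell$ integrable highest weight module, and the higher cohomology vanishes. On $F_\vlambda$ we use \Cref{thm:bbw}, where again the higher cohomology vanishes. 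This identifies $H^0$ on the stack with the $G[X_{\vec p}]$-invariants (covacua) of $\mathcal H_k^*\otimes V_\vlambda^*$. The higher equivariant cohomology reduces to the Lie algebra cohomology $H^i(\g[X_{\vec p}],\mathcal H_k^*\otimes V_\vlambda^*)$. This vanishes for $i>0$ by the Teleman/Kumar vanishing for integrable modules.

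\textbf{Comparison with the moduli space.} First restrict from $\M_{\vlambda,k}$ to the semistable substack $\M^{\rm ss}_{\vlambda,k}$. The unstable strata have positive codimension, and the Kirwan/Teleman stratification argument shows that the local cohomology along these strata vanishes in the relevant degrees for the positive line bundle $\mathfrak L_{\vlambda,k}$. The reason is that the weights of $\mathfrak L$ on the normal directions of each stratum are strictly positive. This gives isomorphisms of all cohomology groups between $\M_{\vlambda,k}$ and $\M^{\rm ss}_{\vlambda,k}$. Next, the map $\M^{\rm ss}_{\vlambda,k}\to\mathscr M_{\vlambda,k}$ is a good moduli space by Teleman's Theorem 2.7 cited above. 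Since $\mathfrak L$ descends, pushforward of the pulled-back $\mathscr L$ returns $\mathscr L$, with no higher direct images because the stabilisers are reductive. This yields $H^i(\M^{\rm ss},\mathfrak L)\cong H^i(\mathscr M,\mathscr L)$ for all $i$. Combining the two steps gives the stated isomorphisms, together with vanishing for $i>0$.

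\textbf{Main obstacle.} The hardest step is the stratification argument: proving that removing the unstable strata changes no cohomology, not merely $H^0$. I would first try Teleman's weight criterion, which requires checking that $\lambda_i/k$ lies in the closed alcove. The $k$-admissibility of $\vlambda$ should give exactly this, including at the boundary walls.

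As a fallback for the vanishing on $\mathscr M_{\vlambda,k}$, I would use Kawamata--Viehweg. The moduli space is a normal projective variety with rational singularities, being a GIT quotient of a smooth variety. The line bundle $\mathscr L\otimes K^{-1}$ is ample because the anticanonical class corresponds to $(2h,2\vec\rho)$ by \Cref{cor:metaplec}, which is positive.
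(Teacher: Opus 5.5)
The paper does not prove this statement; it imports it from Teleman \cite{teleman2000quantization}. Your three-stage plan is the standard route to such a result: vanishing on the whole stack, comparison of the stack with its semistable substack through the instability stratification, then descent along the good moduli map. The last stage is fine as you state it. Pushforward along the good moduli map is exact and returns $\mathscr L_{\vlambda,k}$ from its pullback, so $H^i(\M^{\rm ss}_{\vlambda,k},\mathfrak L_{\vlambda,k})\cong H^i(\mathscr M_{\vlambda,k},\mathscr L_{\vlambda,k})$ for all $i$.

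\textbf{The gap is in your first stage.} It rests on a false claim. Write $M=\mathcal H_k^*\otimes V_\vlambda^*$. The Lie algebra cohomology $H^i(\g[X_{\vec p}],M)$ does \emph{not} vanish for $i>0$ once there is a non-zero conformal block $\phi\in M^{\g[X_{\vec p}]}$. To see this, pick $x\in X_{\vec p}$, let $c\in H^3(\g;\C)$ be the Cartan class, and consider $\mathrm{ev}_x^*c\cup\phi\in H^3(\g[X_{\vec p}],M)$. Restricting to the constant subalgebra $\g\subset\g[X_{\vec p}]$ sends this class to $c\cup\phi$. Write $M=\prod_d M_d$ for its decomposition into finite-dimensional graded $\g$-modules. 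Then $c\cup\phi$ is the non-zero element $(c\otimes\phi_d)_d$ of $H^3(\g,M)=\prod_d H^3(\g;\C)\otimes M_d^{\g}$. Teleman's own computation in fact shows that this Lie algebra cohomology is the space of conformal blocks tensored with all of $H^\bullet(\g[X_{\vec p}];\C)$.

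Sheaf cohomology of the quotient stack $G[X_{\vec p}]\backslash(\mathfrak Q\times F_\vlambda)$ is a different functor. It is computed by derived invariants in \emph{algebraic} representations of the ind-group $G[X_{\vec p}]$. The vanishing of its higher derived functors on $H^0(\mathfrak Q\times F_\vlambda,\mathfrak L_{\vlambda,k})$ is precisely the substantive content of Teleman's Borel--Weil--Bott theorem for line bundles on the moduli stack. You must invoke that theorem as a statement about the stack. Neither Kumar--Mathieu vanishing on $\mathfrak Q$ nor any ``vanishing of Lie algebra cohomology for integrable modules'' supplies it.

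\textbf{Two smaller points.}

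First, in the second stage, a line bundle has no ``weights on normal directions''. For each stratum $S_\beta$, let $\beta$ be its destabilising one-parameter subgroup and $Z_\beta$ the fixed locus. What has to be checked is Teleman's inequality comparing the $\beta$-weight of $\mathfrak L_{\vlambda,k}|_{Z_\beta}$ with the $\beta$-weights of the (virtual) normal bundle of $S_\beta$. This inequality is what removes the invariant part of the local cohomology $H^\bullet_{S_\beta}(\mathfrak L_{\vlambda,k})$, and it must be checked uniformly over the infinitely many strata of the stack.

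Second, the Kawamata--Viehweg fallback does not go through as stated. \Cref{cor:metaplec} computes the canonical bundle of the stack $\M_B$ of full flags, not of $\mathscr M_{\vlambda,k}$. The latter involves partial flag varieties $G/P_{\lambda_i}$ whenever some $\lambda_i$ is singular, for instance the appended zero weights. Even granting that $K^{-1}$ corresponds to $(2h,2\vec\rho)$, positivity in the alcove sense does not make $\mathfrak L_{\vlambda+2\vec\rho,k+2h}$ ample on the particular GIT quotient $\mathscr M_{\vlambda,k}$. Its ample cone is governed by the stability chamber of $\vlambda/k$, and $(\vlambda+2\vec\rho)/(k+2h)$ need not lie in that chamber.
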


\begin{thm}\label{thm:equiv}
Let $\vlambda \in \Lambda_k^{\times n}$. If the space of conformal blocks $\mathcal V_{\vlambda,k}^\dagger $ is non-zero, then
\begin{enumerate}
\item The Bott--Borel--Weil bundle $L_\vlambda \vb F_\vlambda$ descents to the GIT quotient $\L_\vlambda \vb \FF_\vlambda$, with this line bundle given by $\L_\vlambda \defeq (L_\vlambda \git G)$ over $\FF_\vlambda \defeq (F_\vlambda \git G)$;
\item The centre of the group acts trivially on $V_\vlambda$ and $\sum_i \lambda_i$ is in the root lattice;
\item The line bundle $\mathfrak L_{\vlambda,k} \vb \M_{\vlambda,k}$ descends from the stack to the moduli space $\mathscr L_{\vlambda,k} \vb \mathscr M_{\vlambda,k}$. 
\end{enumerate}
\end{thm}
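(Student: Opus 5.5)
The starting point is that $\mathcal V_{\vlambda,k}^\dagger \subseteq V_\vlambda^\g$, so nonvanishing of the conformal blocks gives a nonzero $\g$-invariant vector in $V_\vlambda = V_{\lambda_1}\otimes\cdots\otimes V_{\lambda_n}$. I would prove (2) first, since (1) and (3) both reduce to a statement about how the centre $Z(G)$ acts.

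For (2): since $G$ is simply connected and $\g$ is simple, $G$ is connected and $V_\vlambda$ integrates to a $G$-representation. A nonzero $\g$-invariant vector $v$ is then $G$-invariant, and in particular fixed by $Z(G)$. The centre acts on each irreducible $V_{\lambda_i}$ by the scalar $\lambda_i|_{Z(G)}$ (Schur), so it acts on $V_\vlambda$ by the single character $\sum_i\lambda_i$ restricted to $Z(G)$. Fixing $v\neq 0$ forces this character to be trivial, so $Z(G)$ acts trivially on all of $V_\vlambda$. For the lattice statement, recall that $Z(G)\subset T$ is the kernel of all roots. For simply connected $G$, the character group of $T$ is the weight lattice, and the characters trivial on $Z(G)$ are exactly the root lattice, because $\X(T)/Q$ is dual to $Z(G)$. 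Hence $\sum_i\lambda_i$ lies in the root lattice. Equivalently, one can argue that the weights of $V_\vlambda$ are all congruent to $\sum_i\lambda_i$ modulo the root lattice, and the weight $0$ occurs.

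For (3): I would apply the Proposition on descent with $\vmu=\vlambda$ and $\ell=k$. Its hypothesis that $\sum_i\lambda_i$ is trivial on $Z(G)$ is exactly (2). If one wants to avoid the hypotheses of regularity and large $n$ in that Proposition, I would instead invoke Drezet--Narasimhan \cite[Théorème 2.3]{drezet1989groupe} directly: the stabiliser of a polystable point acts on the fibre. One then checks that on the fibre the centre acts through the character $\sum\lambda_i$, and that the remaining stabiliser contributions act trivially, using the level-$k$ admissibility. I expect this to be the main obstacle. The clean route is to say that the level-$k$ central extension part acts trivially on stabilisers of closed points in the uniformised quotient of \Cref{thm:uniformisation}, so that only the finite-group obstruction $Z(G)$ remains; this is handled in \cite{laszlo1997line, pauly1996espaces} for $\SL(r)$. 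Failing that, I would reduce to stable points via \Cref{lem:codimspace} and Hartogs, as the Proposition does.

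For (1): again use Kempf's descent lemma (Drezet--Narasimhan) on the GIT quotient $F_\vlambda\git G$. Nonemptiness of the semistable locus follows from $H^0(F_\vlambda,L_\vlambda)^G\neq0$. For a point with closed orbit, the stabiliser $H\subset G$ is reductive, and I would show that it acts trivially on the fibre of $L_\vlambda$. A nonzero invariant section that is nonvanishing at a semistable point with closed orbit exists by definition of semistability, and an $H$-invariant nonzero vector in a one-dimensional $H$-representation forces the character to be trivial. The remaining work is to ensure such a section exists at each closed-orbit point. This is exactly the GIT characterisation of the semistable locus: after replacing $L_\vlambda$ by a power, some invariant section is nonvanishing there. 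That shows the character has finite order. To get full triviality, I would combine this with (2), since $Z(G)$ acts trivially, and use the description of stabilisers of polystable configurations as centralisers of tori containing $Z(G)$. I would accept passing to the condition "$\sum\lambda_i$ in the root lattice" as the essential input, mirroring the stack case.
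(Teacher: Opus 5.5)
Your proof of (2) is correct, and it is cleaner than the paper's. The paper reaches (2) only after (1), by applying Kempf's descent lemma to $Z(G)$; you use only a $G$-fixed vector in $V_\vlambda$, so your (2) does not depend on (1).

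The gap is in (1) and (3), at exactly the step you flagged. Semistability only tells you that the stabiliser $H$ of a closed-orbit point acts on the fibre through a character of finite order, hence trivially on $H^\circ$. Kempf's lemma needs that character to be trivial. Your proposed closure is that $H$ is generated by $H^\circ$ and $Z(G)$ (``centralisers of tori containing $Z(G)$''), and correspondingly that only $Z(G)$ obstructs descent from the stack. This is false for general $G$, and in fact (1) and (3) themselves fail in the stated generality.

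Here is a counterexample. Take $G=\mathrm{Sp}_4$, a symplectically orthogonal splitting $\C^4=W_1\oplus W_2$, $n=6$ and all $\lambda_i=\varpi_1$, so that $F_\vlambda=(\mathbb P^3)^6$. Let $x$ consist of three distinct lines in $W_1$ and three distinct lines in $W_2$.
\begin{itemize}
\item $x$ is GIT-stable. An isotropic line $I$ contains at most one of the six lines, a Lagrangian at most two, and $I^\perp$ at most four. This bounds the Hilbert--Mumford weight of $t\mapsto\mathrm{diag}(t^a,t^b,t^{-a},t^{-b})$, for $a\ge b\ge 0$ not both zero, by $-(a+b)<0$.
\item Its stabiliser is $\{(\pm1,\pm1)\}\cong\mu_2\times\mu_2$, which is strictly larger than $Z(G)=\{\pm1\}$.
\item The element $(1,-1)$ acts on $(L_\vlambda)_x$ by $(-1)^3=-1$. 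Meanwhile $Z(G)$ acts trivially and $V_\vlambda^{\g}\neq0$, so $\mathcal V^\dagger_{\vlambda,k}=V_\vlambda^{\g}\neq 0$ for $k\gg0$.
\end{itemize}
Hence $L_\vlambda$ does not descend to $F_\vlambda\git G$.

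For $k\gg0$ the same flags give a stable parabolic structure on the trivial bundle. Its automorphism group is $\mathrm{Stab}_G(x)$, since automorphisms of the trivial bundle on $\CP$ are constant. This group acts on the fibre of $\mathfrak L_{\vlambda,k}$ with the same sign, because constants act trivially on the level-$k$ factor. So (3) fails as well.

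The paper's proof does not get past this point either. It infers descent directly from the existence of one nonzero invariant section: of $L_\vlambda$ for (1), and of $\mathfrak L_{\vlambda,k}$ via Pauly's isomorphism for (3). That is precisely the inference this example refutes; in general only a suitable power of the line bundle descends.

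Your strategy does go through for $G=\SL(r)$, the case relevant to the comparison with the WRT theory. There, stabilisers of polystable flag configurations, and automorphism groups of polystable parabolic bundles, have the form $\{(g_i)\in\prod_i\mathrm{GL}_{m_i}:\prod_i\det(g_i)^{d_i}=1\}$. The component group of this is $\Z/\gcd(d_i)$, which is the image of $Z(\SL(r))=\mu_r$. So ``finite order, trivial on $H^\circ$ and trivial on $Z(G)$'' does force triviality, consistent with Pauly's theorem.

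Separately, routing (3) through the Proposition was never available in the required generality. That Proposition assumes regular weights and large $n$, whereas the main theorem applies the statement to weight vectors padded with zeros.
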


\begin{proof}
Since we assume $\mathcal V_{\vlambda,k}^\dagger \neq 0$ it follows in particular that $0 \neq V_\vlambda^{\g} \cong H^0(F_\vlambda,L_\vlambda)^{\g}$. As such, $L_\vlambda \vb F_\vlambda$ has a non-zero $\g$-invariant section, which therefore descends to the GIT quotient and defines a line bundle, meaning that $L_\vlambda$ descends.

By Kempf's Descent Lemma \cite[Théorème 2.3]{drezet1989groupe}, this implies that stabilisers act trivially on the line bundle $L_\vlambda \vb F_\vlambda$. Since the centre acts trivially on the flag variety, it follows that it acts trivially on $L_\vlambda$ and therefore on $V_\vlambda$. The centre acts via the character $\sum_i\lambda_i$, which must therefore be in the root lattice, since the quotient of the weight lattice by the root lattice is the center of $G$ \cite[p.~373]{fulton2013representation}.

By Pauly's isomorphism $H^0(\M_{\vlambda,k},\mathfrak L_{\vlambda,k}) \neq 0$ so there is at least one isomorphism-invariant section of $\mathfrak L_{\vlambda,k}$, which therefore descends to the moduli space to generate the line bundle $\mathscr L_{\vlambda,k} \to \MM_{\vlambda,k}$.

\end{proof}

Given a tuple of weights $\vmu = \vlambda/ k$, each flag structure defines a (not necessarily semi-stable) parabolic bundle structure on the trivial $G$-bundle. Since semi-stability is an open condition in both the bundle and the GIT sense, and the set of parabolic bundles with trivial underlying bundle is Zariski-open in the moduli space, there is a Zariski-open subset $\tilde U_{\vlambda,k} \subseteq F_\vlambda$ of flags that are semi-stable in both senses. Write $F_\vlambda^{\rm ss} \subseteq F_\vlambda$ for the subset of flags that are semi-stable in the GIT sense.

Since, by \cite[Proposition 4.2(b)]{teleman2003parabolic} semi-stable bundles of type $(\vlambda,k)$ exist if and only if semi-stable bundles based on the trivial bundle exist, and further flag configurations that give semistable parabolic bundle structures on the trivial bundle are in particular GIT-semistable, as GIT-semistability coincides with the absence of destabilising trivial bundles for parabolic structures on the trivial bundle by \cite[Remark 4.3]{teleman2003parabolic}, we see that the subset $\tilde U_{\vlambda,k}$ is non-empty whenever $\mathscr M_{\vlambda,k}$ is non-empty. The quotient $U_{\vlambda,k}$ of $\tilde U_{\vlambda,k}$ by S-equivalence therefore defines a non-empty Zariski-open subset of both the moduli space $\mathscr M_{\vlambda,k}$ and the GIT quotient $\FF_\vlambda = F_\vlambda \git G$. This is summarised in the commutative diagram below.
\begin{equation} \label{diag:stacky}
\begin{tikzcd}
\mathfrak M_{P_{\vec \lambda}}                                          &  & {G[X_{\vec p}] \backslash (\mathfrak Q_G \times \mathcal F_\vlambda)} \arrow[ll, "\cong"'] &  & \mathfrak Q_G \times  F_\vlambda \arrow[ll, ""', two heads] \\
{\mathfrak M_{\vec \lambda,k}^{\rm ss}} \arrow[u, hook] \arrow[d, two heads] &  & {\tilde U_{\vlambda,k}} \arrow[rr, hook] \arrow[d, two heads] \arrow[ll]                           &  & F^{\rm ss}_\vlambda \arrow[u, "{(1,-)}"', hook'] \arrow[d, two heads]                  \\
{\mathscr M_{\vec \lambda,k}}                                            &  & {U_{\vec \lambda,k}} \arrow[ll, "q"'] \arrow[rr, "\iota", hook]                        &  & \FF_\vlambda                                                                              
\end{tikzcd}
\end{equation}

Since the map $\tilde U_{\vlambda,k} \to \mathscr M_{\vlambda,k}$ is manifestly $G$-invariant, as the group action does not change the isomorphism type, it induces a map $q : U_{\vlambda,k} \into \mathscr M_{\vlambda,k}$ with $U_{\vlambda,k} = \tilde U_{\vlambda,k} \git G$ by the universal property, which restricts to an isomorphism with the stable parabolic bundles based on the trivial bundle.

\begin{lemma} \label{lem:lineiso}
If the space of conformal blocks does not vanish, Diagram~\ref{diag:stacky} induces an explicit isomorphism $q^* \mathscr L_{\vlambda,k} \cong \iota^*\L_\vlambda$.
\end{lemma}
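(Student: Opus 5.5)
Both line bundles will be compared by pulling them back to $\tilde U_{\vlambda,k}$, where each is identified with the restriction of the same line bundle on $F_\vlambda$, namely the Bott--Borel--Weil bundle $L_\vlambda$. The identification is then shown to be $G$-equivariant, so that it descends to $U_{\vlambda,k}$.

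The first step uses \Cref{thm:uniformisation}. The line bundle $\mathfrak L_{\vlambda,k}$ on $\M_{P_\vlambda}$ corresponds to $(k,\vlambda)\in\Z\times\prod_i\mathcal X(P_{\lambda_i})$. Its pullback to $\mathfrak Q_G\times F_\vlambda$ is the external tensor product of the $k$-th power of the basic line bundle on $\mathfrak Q_G$ with $L_{\lambda_1}\boxtimes\cdots\boxtimes L_{\lambda_n}=L_\vlambda$, and it carries a $G[X_{\vec p}]$-linearisation. Restricting along $(1,-):F_\vlambda\to\mathfrak Q_G\times F_\vlambda$, the $\mathfrak Q_G$-factor contributes only its fibre at the base point $1$, which is a fixed line. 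The stabiliser of $1\in\mathfrak Q_G$ inside $G[X_{\vec p}]$ consists of the maps extending over the formal disk, hence over all of $\CP$, so it is the constant subgroup $G$. This gives a $G$-equivariant isomorphism
\[(1,-)^*\mathfrak L_{\vlambda,k}\cong L_\vlambda\otimes(\text{fibre at }1),\]
and the right-hand side is $G$-isomorphic to $L_\vlambda$ once we check that $G$ acts trivially on the fibre at $1$ of the level-$k$ line bundle. This holds because $G$ is simple and therefore has no nontrivial characters.

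The second step restricts this isomorphism to $\tilde U_{\vlambda,k}$ and follows both routes in the left square of Diagram~\ref{diag:stacky}. By Theorem~\ref{thm:equiv}(3), $\mathfrak L_{\vlambda,k}|_{\M^{\rm ss}_{\vlambda,k}}$ is the pullback of $\mathscr L_{\vlambda,k}$. Commutativity of the diagram then identifies the pullback of $q^*\mathscr L_{\vlambda,k}$ to $\tilde U_{\vlambda,k}$ with $L_\vlambda|_{\tilde U_{\vlambda,k}}$, $G$-equivariantly. On the other side, Theorem~\ref{thm:equiv}(1) says $L_\vlambda$ descends to $\L_\vlambda$, so the pullback of $\iota^*\L_\vlambda$ to $\tilde U_{\vlambda,k}$ is also $L_\vlambda|_{\tilde U_{\vlambda,k}}$ with its $G$-linearisation. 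We therefore have a $G$-equivariant isomorphism between the two pullbacks.

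The third step descends this equivariant isomorphism to the good quotient $U_{\vlambda,k}=\tilde U_{\vlambda,k}\git G$ using Kempf's descent lemma \cite[Théorème 2.3]{drezet1989groupe}. Descended bundles correspond exactly to linearised bundles with trivial stabiliser actions, and equivariant morphisms between them descend. I expect this to be the main obstacle. One must make sure that the descent data used on the moduli side, namely the passage from the stack to $\mathscr M_{\vlambda,k}$ via S-equivalence, coincides with GIT descent on $\tilde U_{\vlambda,k}$. The difficulty is at strictly semistable points, where stabilisers are larger than $Z(G)$ and closed orbits must be matched with S-equivalence classes.

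To handle this, I would first check the claim over the stable locus. There $q$ is an isomorphism and all stabilisers equal $Z(G)$, which acts trivially on both bundles by Theorem~\ref{thm:equiv}(2), so the isomorphism descends. I would then extend the isomorphism across the rest of $U_{\vlambda,k}$. If the complement of the stable locus has codimension at least $2$, for instance by \Cref{lem:codimspace} for large $n$, Hartogs' Theorem suffices. In general, I would argue directly that an isomorphism of descended bundles is determined by its pullback to $\tilde U_{\vlambda,k}$, since invariant sections of the pullback are exactly sections of the descended bundle.
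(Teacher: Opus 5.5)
Your proposal is correct and follows the same route as the paper. Both pullbacks to $\tilde U_{\vlambda,k}$ are identified with $L_\vlambda|_{\tilde U_{\vlambda,k}}$ via \Cref{thm:uniformisation} and \Cref{thm:equiv}, and commutativity of Diagram~(\ref{diag:stacky}) then gives the isomorphism on $U_{\vlambda,k}$. You are more careful than the paper about the $G$-linearisations and about descent. For descent, your final observation that $M\cong(\phi_*\phi^*M)^G$ for the good quotient $\phi:\tilde U_{\vlambda,k}\to U_{\vlambda,k}$ already suffices, so the stable-locus and Hartogs detour is unnecessary.
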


\begin{proof}
The inclusion $(1,-) : F_\vlambda \into \mathfrak Q_G \times F_\vlambda$ induces via pullback a projection on the second coordinate of Picard groups $\Pic(\mathfrak Q_G) \times \Pic(F_\vlambda) \vb \Pic(F_\vlambda)$. By \Cref{thm:uniformisation}, the quotient map to the stack induces an isomorphism of Picard groups and therefore $\mathfrak L_{(\vlambda,k)} \vb \M$ pulls back to the restriction of the Bott--Borel--Weil bundle $L_\vlambda \vb F_\vlambda^{\rm ss}$, corresponding to the element $\vlambda \in \Pic(F_\vlambda) \cong \prod_{i=1}^n \mathcal X(P_{\lambda_i})$. Therefore, by commutativity of the diagram, the pullbacks of the restrictions of $\mathfrak L_{\vlambda,k}$ and $L_\vlambda$ pull back to the same line bundle over $\tilde U_{\vlambda,k}$.

Under the assumption on the conformal blocks, these line bundles descend to $\MM_{\vlambda,k}$ and $\FF_{\vlambda}$ by \Cref{thm:equiv}. Thus $\iota^*(\L_\vlambda) \cong q^* \L_{\vlambda,k}$ as line bundles over $U_{\vlambda,k}$, by commutativity of the diagram.
\end{proof}

\begin{lemma} \label{lem:quantiso}
If $\mathcal V^\dagger_{\vlambda,k} \neq 0$, then the identification $q^* \mathscr L_{\vlambda,k} = \iota^*\L_\vlambda \eqdef \mathtt L_\vlambda$ from \Cref{lem:lineiso} gives the equality 
$$\imag q^* = \iota^*[\mathcal V_{\vlambda,k}^\dagger],$$
under the injections
\[
q^* &: H^0(\MM_{\vlambda,k},\L_{\vlambda,k}) \to H^0(U_{\vlambda,k},\mathtt L_\vlambda), \ \ \ \ \  \iota^* : H^0(\FF_{\vlambda},\mathcal L_{\vlambda}) \to H^0(U_{\vlambda,k},\mathtt L_\vlambda),
\]
where $\mathcal V_{\vlambda,k}^\dagger \subset H^0(\FF_\vlambda,\L_\vlambda)$.
\end{lemma}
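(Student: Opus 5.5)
We first check that both maps are injective, and then match their images through the stack $\M_{P_\vlambda}$ and its uniformisation.

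\textbf{Injectivity.} $U_{\vlambda,k}$ is a non-empty Zariski-open subset of both $\MM_{\vlambda,k}$ and $\FF_\vlambda$. Both spaces are irreducible: $\FF_\vlambda$ is the GIT quotient of the irreducible variety $F_\vlambda^{\rm ss}$, and $\MM_{\vlambda,k}$ is normal and irreducible in genus zero. A holomorphic section vanishing on a dense open subset therefore vanishes identically, so $q^*$ and $\iota^*$ are injective. By \Cref{lem:lineiso} their targets are the same space $H^0(U_{\vlambda,k},\mathtt L_\vlambda)$. It then suffices to show that $\imag q^* \subseteq \iota^*[\mathcal V^\dagger_{\vlambda,k}]$ and that the two spaces have equal dimension. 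The dimension count is Pauly's isomorphism $H^0(\MM_{\vlambda,k},\mathscr L_{\vlambda,k}) \cong \mathcal V^\dagger_{\vlambda,k}$, combined with \Cref{thm:stackisspace}.

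\textbf{Inclusion.} Take $s \in H^0(\MM_{\vlambda,k},\mathscr L_{\vlambda,k})$ and proceed as follows.
\begin{enumerate}
\item By \Cref{thm:stackisspace}, $s$ corresponds to a section $\sigma$ of $\mathfrak L_{\vlambda,k}$ on $\M^{\rm ss}_{\vlambda,k}$. Since the complement of the semistable locus has high codimension in the stack for the relevant line bundles (or simply by Teleman's theorem), $\sigma$ extends to a section over $\M_{P_\vlambda}$.
\item By \Cref{thm:uniformisation}, this extension is the same as a $G[X_{\vec p}]$-invariant section $\tilde\sigma$ of the corresponding line bundle on $\mathfrak Q \times F_\vlambda$.
\item Restrict $\tilde\sigma$ along $(1,-)$ to get a section of $L_\vlambda$ on $F_\vlambda$. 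Constant loops $G \subset G[X_{\vec p}]$ fix the base point $1 \in \mathfrak Q$, so the restriction is $G$-invariant, i.e.\ it lies in $V_\vlambda^\g$.
\end{enumerate}
This is exactly the construction in Beauville--Laszlo/Pauly of the map to covacua. The standard description of conformal blocks as $H^0(\mathfrak Q\times F_\vlambda, \mathfrak L)^{G[X_{\vec p}]}$ shows that this restriction lands in $\mathcal V^\dagger_{\vlambda,k} \subset V_\vlambda^\g$, and that it is Pauly's isomorphism.

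\textbf{Commutativity on $U_{\vlambda,k}$.} It remains to see that $\iota^*$ of the descended invariant section equals $q^*s$. This follows from the commutativity of Diagram~\ref{diag:stacky} on $\tilde U_{\vlambda,k}$: both sections pull back to the restriction of $\tilde\sigma(1,-)$ to $\tilde U_{\vlambda,k}$. Both $q^*s$ and $\iota^*$ of the descended section are defined by descent from $\tilde U_{\vlambda,k}$, and the line bundle identification of \Cref{lem:lineiso} was itself built by the same pullback. Hence they agree, which gives $\imag q^* \subseteq \iota^*[\mathcal V^\dagger_{\vlambda,k}]$, and equality then follows from the dimension count.

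\textbf{Main obstacle.} The hard step is identifying the image of the restriction map $H^0(\mathfrak Q\times F_\vlambda,\mathfrak L)^{G[X_{\vec p}]} \to V_\vlambda^\g$ with the TUY subspace $\mathcal V^\dagger_{\vlambda,k}$. This requires invoking Beauville--Laszlo/Pauly's identification of $H^0(\mathfrak Q,\mathcal L^k)^*$ with the level-$k$ basic representation, so that invariant sections become $\g\otimes\C[X_{\vec p}]$-coinvariant duals, i.e.\ covacua. It also requires checking that in genus zero these covacua embed in $V_\vlambda^\g$ via the lowest-energy component, compatibly with the TUY embedding used in the KZ section. We would cite this identification rather than reprove it, and then verify the compatibility of normalisations.
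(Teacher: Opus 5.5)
Your proposal is correct and takes the same route as the paper. You use Teleman's theorem to pass from $H^0(\mathscr M_{\vec\lambda,k},\mathscr L_{\vec\lambda,k})$ to sections over $\mathfrak M_{P_{\vec\lambda}}$, identify these with the covacua via pullback to $\mathfrak Q\times F_{\vec\lambda}$ (Laszlo--Sorger/Beauville--Laszlo), restrict along $(1,-)$ to get a $G$-invariant section that descends to the GIT quotient, and conclude by commutativity of Diagram~(\ref{diag:stacky}) on $U_{\vec\lambda,k}$. The only differences are minor: you justify the injectivity of $q^*$ and $\iota^*$ explicitly, and you close with a dimension count, which becomes redundant once you invoke the cited isomorphism as the paper does.
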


\begin{proof}
By \cite[Theorem 9.6]{teleman2000quantization}, the maps $\mathscr M_{\vlambda,k} \twoheadleftarrow \M^{\rm ss}_{\vlambda,k} \hookrightarrow \M_{P_\vlambda}$ induce isomorphisms
\[
H^0(\mathscr M_{\vlambda,k},\mathscr L_{\vlambda,k}) \cong H^0(\M^{\rm ss}_{\vlambda,k},\mathfrak L_{\vlambda,k}^{\rm ss}) \cong H^0(\M_{P_\vlambda},\mathfrak L_{\vlambda,k}).
\]
By \cite[Theorem 1.2.1]{laszlo1997line}, the pullback along $\mathfrak Q_G \times F_\vlambda \to \M_{P_\vlambda}$ is an isomorphism from the space of sections $H^0(\M_{P_\vlambda},\mathfrak L_{\vlambda,k})$ to the conformal blocks $\VV_{\vlambda,k}^\dagger$. The pullback over $(1,-)$ is then immediately $G$-invariant (again because the $G$-action does not change the isomorphism type and thus is trivial on the moduli space) and must descend to the GIT quotient. The commutativity of the Diagram~(\ref{diag:stacky}) then implies that the sections pulled back to $U_{\vlambda,k}$ must be identical, proving the result.
\end{proof}

We have thus established the following theorem.

\begin{thm} \label{thm:pauly}
The maps $q$ and $\iota$ instantiate Pauly's isomorphism for the Bott--Borel--Weil model of the space of conformal blocks:
$$ \Phi := (q^*)\inv \circ \iota\mid_{\mathcal V_{\vlambda,k}^\dagger }^* : \mathcal V_{\vlambda,k}^\dagger \rightarrow H^0(\mathscr M_{\vlambda,k},\mathscr L_{\vlambda,k}). $$
\end{thm}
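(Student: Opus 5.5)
The statement asserts that $\Phi = (q^*)^{-1}\circ \iota^*|_{\mathcal V_{\vlambda,k}^\dagger}$ is a well-defined linear isomorphism onto $H^0(\mathscr M_{\vlambda,k},\mathscr L_{\vlambda,k})$ that agrees with Pauly's isomorphism up to projective equivalence. The plan is to assemble it from \Cref{lem:lineiso} and \Cref{lem:quantiso}. We may assume $\mathcal V_{\vlambda,k}^\dagger\neq 0$; otherwise both sides vanish by Pauly's isomorphism and there is nothing to prove. In that case \Cref{thm:equiv} guarantees that $L_\vlambda$ descends to $\L_\vlambda$ over $\FF_\vlambda$ and $\mathfrak L_{\vlambda,k}$ descends to $\mathscr L_{\vlambda,k}$ over $\mathscr M_{\vlambda,k}$. \Cref{lem:lineiso} then identifies $q^*\mathscr L_{\vlambda,k}\cong\iota^*\L_\vlambda=\mathtt L_\vlambda$ over $U_{\vlambda,k}$, so both restriction maps land in the common space $H^0(U_{\vlambda,k},\mathtt L_\vlambda)$.

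The first step is injectivity of the two restriction maps. The set $U_{\vlambda,k}$ is a non-empty Zariski-open subset of $\mathscr M_{\vlambda,k}$ and of $\FF_\vlambda$. Both of these are irreducible: $\FF_\vlambda$ is a GIT quotient of the irreducible variety $F_\vlambda$, and $\mathscr M_{\vlambda,k}$ is a good quotient of the irreducible stack $\M^{\rm ss}_{\vlambda,k}$, which is open in $\M_{P_\vlambda}$, itself a quotient of the irreducible space $\mathfrak Q_G\times F_\vlambda$ by \Cref{thm:uniformisation}. A holomorphic section of a line bundle on an irreducible variety that vanishes on a dense open set vanishes identically, so $q^*$ and $\iota^*$ are injective.

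The second step uses \Cref{lem:quantiso}, which gives $\imag q^*=\iota^*[\mathcal V_{\vlambda,k}^\dagger]$. Hence $(q^*)^{-1}$ is defined on $\iota^*[\mathcal V_{\vlambda,k}^\dagger]$, and $\Phi$ is a composition of two bijections: $\iota^*\colon\mathcal V^\dagger_{\vlambda,k}\to\imag q^*$ and $(q^*)^{-1}\colon \imag q^*\to H^0(\mathscr M_{\vlambda,k},\mathscr L_{\vlambda,k})$. So $\Phi$ is an isomorphism.

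The remaining step is to check that $\Phi$ agrees with Pauly's isomorphism, which is where I expect the difficulty to lie. By construction, the isomorphism of \cite[Theorem 1.2.1]{laszlo1997line} composed with Teleman's identification of \Cref{thm:stackisspace} is exactly the route used in the proof of \Cref{lem:quantiso}. It therefore suffices to trace commutativity of Diagram~(\ref{diag:stacky}): a section of $\mathscr L_{\vlambda,k}$ pulls back to the stack, then along $(1,-)$ to $F_\vlambda^{\rm ss}$, giving a $G$-invariant section, that is, a conformal block viewed in $V_\vlambda^\g$ via Bott--Borel--Weil (\Cref{thm:bbw}). Restricting this section to $\tilde U_{\vlambda,k}$ agrees with restricting via $q$. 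The subtle point is that the line bundle identifications are canonical only up to a nonzero scalar, since automorphisms of a line bundle on a connected compact space are constants. This is precisely why Pauly's isomorphism is natural only projectively, and it is consistent with the claim. I would make this explicit by fixing the trivialisation over $\tilde U_{\vlambda,k}$ induced by the Picard group isomorphism and noting that any other choice rescales $\Phi$ by a constant.
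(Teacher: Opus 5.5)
Your proposal is correct and follows the paper's route. The paper obtains \Cref{thm:pauly} simply by combining \Cref{lem:lineiso}, which identifies $q^*\mathscr L_{\vlambda,k}\cong\iota^*\mathcal L_{\vlambda}$, with \Cref{lem:quantiso}, which gives $\imag q^*=\iota^*[\mathcal V^\dagger_{\vlambda,k}]$ via Teleman's and Laszlo--Sorger's isomorphisms and the commutativity of Diagram~(\ref{diag:stacky}), exactly as you do; your irreducibility argument for injectivity of $q^*$ and $\iota^*$ and your remark on the scalar ambiguity only make explicit what the paper takes for granted.
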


This isomorphism of vector spaces is natural in the complex structure and therefore defines an isomorphism of vector bundles over $ {\mathbb M}_n$ between the sheaf of conformal blocks and the Verlinde bundle $H_{\vlambda,k} \vb {\mathbb M}_n$ whose fibres are given by $H^0(\mathscr M_{\vlambda,k},\mathscr L_{\vlambda,k})$.

We will need to shift weights and levels in order to construct the Hitchin connection in \Cref{HC}.

\begin{thm} \label{thm:meta-incl}
For $\vlambda,\vec\rho=(\rho,\dots,\rho) \in \Lambda_k^{\times n}$ there is a natural inclusion of vector spaces $$\tilde \pi: H^0(\mathscr M_{\vlambda,k},\mathscr L_{\vlambda, k}) \into H^0(\mathscr M_{\vlambda+ \vec\rho,k+h},\mathscr L_{\vlambda+\vrho,k+h}\otimes \delta).$$ 
\end{thm}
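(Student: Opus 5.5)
The plan is to work on the moduli stacks, where both line bundles turn out to be the same object, and only at the end pass to the moduli spaces using \Cref{thm:stackisspace}. The key observation is the bookkeeping in \Cref{thm:uniformisation} and \Cref{cor:metaplec}. On the stack $\M_B$ of quasi-parabolic bundles with full flags we have $\delta = \mathfrak L_{(-\vrho,-h)}$. Since the identification $\Pic(\M_B)\cong \Z\times\prod_i\mathcal X(B)$ is a group isomorphism, this gives
\[
\mathfrak L_{(\vlambda+\vrho,\,k+h)}\otimes\delta \cong \mathfrak L_{(\vlambda,k)} \quad\text{on } \M_B,
\]
where $\vlambda$ is now viewed as a tuple of characters of $B$. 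The weights $\lambda_i+\rho$ are always regular, so the target moduli space $\MM_{\vlambda+\vrho,k+h}$ is the one attached to the Borel flag type. The source $\MM_{\vlambda,k}$ is attached to the possibly larger parabolics $P_{\lambda_i}$.

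First, I would compare the two flag types. Consider the forgetful morphism $\varpi:\M_B\to\M_{P_\vlambda}$, which replaces each full flag by the partial flag it determines. It is a locally trivial fibration with fibre $\prod_i P_{\lambda_i}/B$. Under the Picard identification it satisfies $\varpi^*\mathfrak L_{(\vlambda,k)}=\mathfrak L_{(\vlambda,k)}$, because characters of $P_{\lambda_i}$ restrict to the same characters of $B$ and the level is unchanged. The fibres are connected projective varieties, so $\varpi_*\mathcal O=\mathcal O$. The projection formula then gives
\[
H^0(\M_{P_\vlambda},\mathfrak L_{(\vlambda,k)})\cong H^0(\M_B,\mathfrak L_{(\vlambda,k)}).
\]
This step can also be checked through the uniformisation $\mathfrak Q\times F_\vlambda$ versus $\mathfrak Q\times (G/B)^n$, where it reduces to $H^0(P/B,\mathcal O)=\C$.

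Second, I would assemble the chain of maps:
\[
H^0(\MM_{\vlambda,k},\mathscr L_{\vlambda,k}) \cong H^0(\M_{P_\vlambda},\mathfrak L_{(\vlambda,k)}) \cong H^0(\M_B,\mathfrak L_{(\vlambda+\vrho,k+h)}\otimes\delta) \to H^0(\M^{\rm ss}_{\vlambda+\vrho,k+h},\mathfrak L_{(\vlambda+\vrho,k+h)}\otimes\delta).
\]
The first isomorphism is \Cref{thm:stackisspace}, combined with the identification used in the proof of \Cref{lem:quantiso}. The last arrow is restriction to the open substack of bundles that are semistable for the shifted weights. This restriction is injective, provided the semistable substack is nonempty and the stack $\M_B$ is irreducible. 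Irreducibility holds because $\M_B$ is dominated by $\mathfrak Q\times(G/B)^n$, which is irreducible since $\mathfrak Q$ is reduced and irreducible for simply connected $G$. Consequently a section vanishing on a dense open substack vanishes identically. Every map in the chain is a pullback or a restriction, so the composite is natural in $(\CP,\vec p)$ and defines a morphism of bundles over ${\mathbb M}_n$.

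The main obstacle is the last identification: showing that sections over the semistable stack of the twisted bundle $\mathfrak L_{(\vlambda+\vrho,k+h)}\otimes\delta$ are the same as sections over $\MM_{\vlambda+\vrho,k+h}$ of the descended bundle $\mathscr L_{\vlambda+\vrho,k+h}\otimes\delta$. \Cref{thm:stackisspace} is stated for the bundles $\mathfrak L_{\vlambda,k}$ themselves, so it does not apply verbatim. I would proceed in two steps.

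\begin{itemize}
\item \textbf{Descent.} First I would establish that the twisted bundle descends. Over the stable locus, with $n$ large as in \Cref{lem:codimspace}, this follows from Kempf's descent criterion. The centre $Z(G)$ acts through the character $\sum_i\lambda_i$, and this character is trivial by \Cref{thm:equiv}; the factors involving $\rho$ cancel in the twisted bundle. The remaining locus of small codimension would be handled with Hartogs' theorem.
\item \textbf{Sections.} Once descent is known, I would use that $\MM_{\vlambda+\vrho,k+h}$ is a good moduli space for the semistable stack. Pushforward of $\mathcal O$ along a good moduli space map is $\mathcal O$, so sections of a descended line bundle on the stack coincide with sections on the moduli space.
\end{itemize}

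If the descent argument for $\delta$ fails at strictly semistable points, the fallback is to carry out the comparison on the stable locus only. Normality of the moduli space together with the codimension bound of \Cref{lem:codimspace} lets sections extend across the complement.
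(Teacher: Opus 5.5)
Your proposal follows essentially the same route as the paper. You pull back along $\pi:\mathfrak M_B\to\mathfrak M_{P_{\vec\lambda}}$, use $\delta=\mathfrak L_{(-\vec\rho,-h)}$ on $\mathfrak M_B$ to identify $\mathfrak L_{(\vec\lambda+\vec\rho,k+h)}\otimes\delta$ with $\mathfrak L_{(\vec\lambda,k)}$, restrict to the shifted semistable substack, and descend to $\mathscr M_{\vec\lambda+\vec\rho,k+h}$. Your extra justifications are sound and make the paper's terse argument more careful without changing it: $\varpi_*\mathcal O=\mathcal O$; injectivity of the restriction via irreducibility of $\mathfrak M_B$, in place of the paper's implicit appeal to \Cref{thm:stackisspace} for the twisted bundle; and descent via Kempf's criterion on the stable locus plus Hartogs, mirroring the paper's own descent proposition.
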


\begin{proof}
The projections $G/B \vb G/P_i$ induce a projection $\pi : \M_B \vb \M_{P_\vlambda}$ via the presentation from \Cref{thm:uniformisation}, giving rise to the following diagram.
\begin{center}
\begin{tikzcd}
\mathfrak M_{P_{\vec \lambda}}                                          &                                                                             & \mathfrak M_B \arrow[ll, "\pi"', two heads]                                                   \\
{\mathfrak M_{\vec \lambda,k}^{\rm ss}} \arrow[u, hook] \arrow[d, two heads] &                                                                             & {\mathfrak M_{\vec \lambda + \vec \rho,k + h}^{\rm ss}} \arrow[u, hook] \arrow[d, two heads] \\
{\MM_{\vec \lambda,k}}                                            &
& {\MM_{\vec \lambda + \vec \rho,k + h}}                                           
\end{tikzcd}
\end{center}
The pullback bundle $\pi^*\mathfrak L_{\vlambda,k} \vb \M_B$ of $\mathfrak L_{\vec \lambda,k} \vb \M_{P_\vlambda}$ is isomorphic to $\mathfrak L_{\vlambda,k} \vb \M_B$, as the projection $G/B \vb G/P_{\lambda_i}$ induces an inclusion $\mathcal X(P_{\lambda_i}) \into \mathcal X(B)$ because each character of $P_{\lambda_i}$ defines a (unique) character on $B \subseteq P$, so $\pi^* : \Pic(\M_{P_\vlambda}) \into \Pic(\M_B)$ is an inclusion via \Cref{thm:uniformisation}.

By \Cref{cor:metaplec}, the unique square root of the canonical bundle of $\M_B$ is $\mathfrak L_{-\vrho,-h}$. Since the projection $\M^{\rm ss}_{\vlambda + \vec \rho,k + h} \to \mathscr M_{\vlambda + \rho,k + h}$ maps the canonical bundle to the canonical bundle, it follows that the pull back of $\mathscr L_{\vec \lambda + \vec \rho, k + h} \otimes \delta$ coincides with the restriction of $\mathfrak L_{\vlambda + \vrho, k + h} \otimes \mathfrak L_{-\vrho,-h} \cong \mathfrak L_{\vlambda,k}$.

Any holomorphic section of $\mathscr L_{\vlambda,k} \vb \mathscr M_{\vlambda,k}$ pulls back to one of $\mathfrak L_{\vlambda,k}|_{\M^{\rm ss}_{\vlambda,k}} \vb \M_{\vlambda,k}^{\rm ss}$ by definition of $\mathscr L_{\vlambda,k}$, which extends uniquely to a section on $\mathfrak L_{\vlambda,k} \vb \M_{P_\vlambda}$ by \Cref{thm:stackisspace}. The pullback along $\pi$ can then be restricted to the semi-stable substack $\M_{\vlambda + \rho,k + h}^{\rm ss} \subset \M_B$, which descends uniquely to the moduli space $\mathscr M_{\vlambda + \vrho,k+h}$ again via \Cref{thm:stackisspace}. This defines a natural map, which is injective because the pullback $\pi^*$ is injective and all other maps are isomorphisms of spaces of sections.
\end{proof}

Again, this map is natural in the complex structure and hence extends to an inclusion of vector bundles
$$ H_{\vlambda,k}  \subset  \updel H_{\vlambda+\vrho,k+h}$$
where $ \updel H_{\vlambda+\vrho,k+h}\vb {\mathbb M}_n$ is the vector bundle over $\mathbb{M}_n$ whose fibers are given by $H^0(\mathscr M_{\vlambda+\vrho,k+h},\mathscr L_{\vlambda+\vrho,k+h}\otimes \delta)$.

Likewise, combining \Cref{thm:meta-incl} with \Cref{thm:pauly} gives a similar map for the sheaf of conformal blocks.

\begin{thm}\label{incl1}
There is a natural inclusion of vector bundles over $\mathbb{M}_n$
$$ \Psi := \tilde \pi \circ \Phi : \mathcal V_{\vlambda,k}^\dagger \into  \updel H_{\vlambda+\vrho,k+h}. $$
\end{thm}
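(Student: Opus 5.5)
The plan is to treat $\Psi$ as a composite of two fibrewise maps that have already been constructed. Each map is natural in the point $\vec p \in \mathbb M_n$, so the composite should be a holomorphic injective morphism of vector bundles over $\mathbb M_n$. We work under the standing hypotheses of \Cref{thm:pauly} and \Cref{thm:meta-incl}: $\mathcal V_{\vlambda,k}^\dagger \neq 0$, so that \Cref{thm:equiv} makes the line bundles descend, and the relevant semi-stable loci are non-empty. On the locus where $\mathcal V_{\vlambda,k}^\dagger = 0$ there is nothing to prove.

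The first step is fibrewise. For fixed $\vec p$, \Cref{thm:pauly} gives the isomorphism $\Phi = (q^*)^{-1}\circ \iota^*|_{\mathcal V^\dagger_{\vlambda,k}}$ onto $H^0(\mathscr M_{\vlambda,k},\mathscr L_{\vlambda,k})$. \Cref{thm:meta-incl} gives the injection $\tilde\pi$ into $H^0(\mathscr M_{\vlambda+\vrho,k+h},\mathscr L_{\vlambda+\vrho,k+h}\otimes\delta)$. An injection composed with an isomorphism is injective, so $\Psi_{\vec p}$ is injective on each fibre.

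The second step is to globalise over $\mathbb M_n$ by redoing each construction in families. Take the universal family of marked curves $(\CP,\vec p)$ over $C_n(\CP)$, and note that all constructions are $\mathrm{PSL}(2,\C)$-equivariant, so they descend to $\mathbb M_n$. Then:
\begin{itemize}
\item the uniformisation of \Cref{thm:uniformisation}, the stacks $\M_{P_\vlambda}$ and $\M_B$, the projection $\pi$, and the line bundles $\mathfrak L_{\vmu,\ell}$ and $\delta$ all exist relatively over the base;
\item the open sets $\tilde U_{\vlambda,k}\subseteq F_\vlambda$ vary algebraically with $\vec p$;
\item the pullbacks $q^*$, $\iota^*$ and $\pi^*$, and the extension and descent isomorphisms of \Cref{thm:stackisspace}, are induced by morphisms of families.
\end{itemize}
Hence each map sends local holomorphic sections of the source bundle to local holomorphic sections of the target. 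The source is $\mathcal V^\dagger_{\vlambda,k}$, a holomorphic subbundle of the trivial $V_\vlambda^\g$-bundle by the TUY theorem. The targets are the Verlinde-type bundles $H_{\vlambda,k}$ and $\updel H_{\vlambda+\vrho,k+h}$. Their local freeness follows from the cohomology vanishing in \Cref{thm:stackisspace} together with cohomology and base change, which makes the dimension of the fibres locally constant.

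The third step is to conclude that $\Psi$ is a subbundle inclusion. Since $\Psi$ is a holomorphic bundle map that is injective on every fibre, its image has constant rank, so it is a holomorphic subbundle, and $\Psi$ is an isomorphism onto it. This identifies $\mathcal V_{\vlambda,k}^\dagger \cong H_{\vlambda,k}\subset \updel H_{\vlambda+\vrho,k+h}$.

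The main obstacle is the naturality in the second step. The paper asserts that $\Phi$ and $\tilde\pi$ are natural in the complex structure, but making this precise means checking that the choices involved glue holomorphically in $\vec p$. These choices are the isomorphism $q^*\mathscr L_{\vlambda,k}\cong\iota^*\L_\vlambda$ of \Cref{lem:lineiso} and the identification of the square root $\delta$. Each is canonical only up to a nonzero scalar per fibre. My first move is to fix the scalars by trivialising over the simply-connected cover or Teichmüller space, which yields a holomorphic inclusion. Alternatively one accepts that the inclusion is well defined up to a holomorphic nowhere-vanishing function, which suffices because the later statements are projective anyway.
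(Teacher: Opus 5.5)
Your proposal is correct and takes the paper's route: the paper obtains $\Psi$ in one line by composing the fibrewise isomorphism $\Phi$ of \Cref{thm:pauly} with the injection $\tilde\pi$ of \Cref{thm:meta-incl}, relying on the asserted naturality of both in the complex structure. Your families argument, the constant-rank conclusion and the remark on fixing scalars only make explicit what the paper takes for granted. This includes, as in the paper, the local freeness of $\updel H_{\vlambda+\vrho,k+h}$, which \Cref{thm:stackisspace} does not literally cover, because there the line bundle is $\mathfrak L_{\vlambda,k}$ restricted to the $(\vlambda+\vrho)/(k+h)$-semistable locus.
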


Theorem \ref{thm:meta-incl} and \ref{incl1} will be essential for us when we construct the Hitchin connection and compare it to the TUY-connection in $V_{\vlambda,k}^\dagger $.

For the construction of the metaplectic correction, it will be important to understand the growth of the codimension of the singular locus with the number of marked points.

\begin{lemma} \label{cor:metaexists}
Assume $n \geq 3$ and $\vlambda \in \Lambda_k^{\times n}$ a tuple of $k$-admissible weights. For a positive integer $m$ let $\vec \lambda_m = (\vlambda,\vec0) \in \Lambda_k^{\times (n+m)}$ be $\vlambda$ followed by $m$ zeros and $\vec\rho=(\rho,\dots,\rho) \in \Lambda_k^{\times (n+m)}$. Then for sufficiently large $m$ 
$$ H^{0,1}(\mathscr M^{\rm s}_{\vlambda_m+\vrho,k + h}) = 0.$$
\end{lemma}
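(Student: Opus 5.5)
The plan is to compare $H^{0,1}$ of the stable locus with $H^{0,1}$ of the whole stack, and to use the codimension estimate of \Cref{lem:codimspace} to pass between them.

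First note what the weights become. After the shift by $\vrho$ and $h$, the weights of $\vlambda_m + \vrho$ at level $k+h$ are $\lambda_i+\rho$, and these are all regular. Moreover $\langle \lambda_i + \rho,\theta\rangle \le k + h - 1$, so they are $(k+h)$-admissible. Hence $\MM_{\vlambda_m+\vrho,k+h}$ has maximal flag type $B$ at every one of the $n+m$ points, and its underlying stack is $\M_B$ for $n+m$ points.

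By the proposition preceding \Cref{lem:evenchern}, for $m$ large the moduli space contains stable points. By \Cref{lem:codimspace}, applied with $n+m$ marked points, the strictly semistable locus has codimension at least
\[
\tfrac12(n+m-2)(\dim G-\dim P)-\dim Z(P)
\]
for every proper parabolic $P$. This bound tends to infinity with $m$, uniformly in $P$ since there are finitely many conjugacy classes. Choose $m$ so that this codimension is at least $3$. The unstable (non-semistable) locus in the stack $\M_B$ has codimension growing in $m$ in the same way, by the Harder--Narasimhan stratification together with a dimension count of the same type as in \Cref{lem:codimspace}. So both excisions happen in codimension at least $3$.

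The cohomological step then runs as follows. On a normal variety, excising a closed subset of codimension at least $3$ does not change $H^1$ of the structure sheaf; this is a local cohomology (depth) argument, where one needs $\operatorname{depth}\ge 3$ along the removed set. The moduli space $\MM$ is a good quotient of a smooth variety by a reductive group, hence has rational singularities and is Cohen--Macaulay by Boutot. Therefore $H^1(\MM^{\rm s},\mathcal O)=H^1(\MM,\mathcal O)$. By the analogue of \Cref{thm:stackisspace} for the trivial line bundle (Teleman's result, Theorem 9.6), this equals $H^1(\M^{\rm ss},\mathcal O)$. Removing the unstable strata of high codimension from the smooth stack identifies this further with $H^1(\M_B,\mathcal O)$.

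Finally I would show that $H^1(\M_B,\mathcal O)=0$. Using the uniformisation of \Cref{thm:uniformisation}, $\M_B\cong G[X_{\vec p}]\backslash(\mathfrak Q\times F)$. The affine Grassmannian $\mathfrak Q$ and the flag varieties have vanishing higher cohomology of $\mathcal O$ by Kumar and Mathieu. The group $G[X_{\vec p}]$ has trivial $H^1$ of the Lie algebra, by Teleman's Borel--Weil--Bott theory for loop groups. The descent spectral sequence then gives $H^1=0$. Alternatively, and more directly, one can use that $\MM$ is rational: a genus zero parabolic moduli space is birational to $F^{n+m}\git G$. Since $\MM$ is projective with rational singularities, $H^1(\MM,\mathcal O)=0$. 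Combined with the excision above, this yields $H^{0,1}(\MM^{\rm s}_{\vlambda_m+\vrho,k+h})=0$ via Dolbeault.

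The main obstacle I expect is the codimension of the excised loci. It is needed both for the strictly semistable locus in the space and for the unstable locus in the stack, and it must be at least $3$ rather than $2$, so that $H^1$ rather than only $H^0$ is preserved. \Cref{lem:codimspace} handles the first, but it has to be checked against \emph{all} destabilising types and uniformly as $m$ grows. The rationality route avoids the stack, but it requires verifying the depth condition for $\mathcal O_{\MM}$ along the singular locus, which I would obtain from the Cohen--Macaulay property.
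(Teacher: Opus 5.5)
Your proof is correct, and its ``more direct'' branch is the paper's argument. You use \Cref{lem:codimspace} to push the strictly semistable locus into high codimension; this is legitimate, since the shifted weights $\lambda_i+\rho$ are regular and $(k+h)$-admissible. You then identify $H^{0,1}$ of $\mathscr M^{\rm s}$ with that of $\mathscr M$, and kill the latter by rationality. The differences lie in the justifications, and yours are the more careful ones.

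\textbf{Excision.} The paper passes from $\mathscr M^{\rm s}$ to $\mathscr M$ by citing Hartogs' theorem, which by itself only controls sections. You correctly observe that $H^1$ requires depth at least $3$ along the excised set. You get this from Boutot's theorem: $\mathscr M$ is a good quotient of a smooth parameter space, so it has rational, in particular Cohen--Macaulay, singularities, and depth equals codimension.

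\textbf{Vanishing on $\mathscr M$.} The paper invokes \cite[Corollary 5]{lodh2024birational} and declares its $S_3$ and pseudo-rationality hypotheses vacuous by high codimension. You instead use $H^1(\mathscr M,\mathcal O)\cong H^1(\widetilde{\mathscr M},\mathcal O)=0$ for a resolution $\widetilde{\mathscr M}$, which is smooth, projective and rational. This is more standard. Moreover, the Cohen--Macaulay property it produces is exactly what makes an $S_3$-type condition hold at singular points of codimension $\geq 3$, where it is not automatic.

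\textbf{Points left implicit in both versions.} First, rationality of $\mathscr M$. You reduce it to rationality of $F_{\vlambda_m+\vrho}\git G$, which still needs an argument, e.g.\ slicing by a pair of opposite Borel subgroups to reach a torus quotient of affine space. Second, the comparison of Zariski $H^1$ with Dolbeault cohomology on the non-compact manifold $\mathscr M^{\rm s}$. This needs the analytic version of the local-cohomology vanishing, together with GAGA on $\mathscr M$.

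\textbf{Your stack-theoretic route.} The route you present first excises the unstable Harder--Narasimhan strata of $\M_B$ and computes $H^1(\M_B,\mathcal O)$ via \Cref{thm:uniformisation}. It is genuinely different and much heavier. It rests on a codimension estimate for unstable strata in the stack, which \Cref{lem:codimspace} does not provide and which you only sketch. It also needs cohomological inputs on $\mathfrak Q$ and $G[X_{\vec p}]$. Once the rational-singularities argument is in place it is unnecessary, and you can drop it.
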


\begin{proof}
For sufficiently large $m$ there are always stable bundles, so by \Cref{lem:codimspace} the codimension of the complement scales positively with $(n+m-2)$. So for sufficiently large $m$, Hartogs' Theorem implies that $H^{0,1}(\mathscr M^{\rm s}_{\vlambda_m+\vrho,k + h}) \cong H^{0,1}(\mathscr M_{\vlambda_m+\vrho,k + h})$. Since $\mathscr M_{\vlambda_m+\vrho,k + h}$ is rational and both $\mathscr M_{\vlambda_m+\vrho,k + h}$ and projective space are proper, it follows they are properly birational, witnessed by the closure of the graph of the birational map.

Therefore \cite[Corollary 5]{lodh2024birational} implies $H^{0,1}(\mathscr M_{\vlambda_m+\vrho,k + h}) \cong H^{0,1}(\CP[r]) = 0$, since $\mathscr M_{\vlambda_m+\vrho,k + h}$ is of finite type and therefore Nagata, and because the singular locus has arbitrarily high codimension the conditions S$_3$ from \cite[Corollary 5]{lodh2024birational}  and pseudo-rationality in codimension 2 are vacuously satisfied.
\end{proof}

\section{The moduli space of flat connections}

Since we want to identify the Hitchin connection construction in the bundle over Teichmüller space, whose fibres are obtained by applying geometric quantisation to the moduli spaces of flat connections, we now consider the moduli space of flat connections over a smooth surface $S^2$ of genus zero with marked points with respect to the compact simple real Lie group $K$. Principal bundles over surfaces with simply connected structure groups are trivialisable, so we may and will assume we are considering the trivial $K$-bundle over $S^2$ and identify connections with connection 1-forms. Fix a finite set of marked points $\vec s = (s_1, \ldots s_n)$ on $S^2$ and $\valpha = (\alpha_1,\dots,\alpha_n) \in \mathfrak t^{\times n}$. Let $\Sigmao$ be the complement of the points $\vec s$.

\begin{definition}
The \emph{moduli space $\mathcal{M}_{\vec \alpha}$ of flat $K$-connections with holonomy $\vec\alpha \in \k^{\times n}$} is defined to be 
\[
\mathcal{M}_\valpha \defeq \{A \in \Omega^1(\Sigmao,\k) \con F_A = 0 \andd [\Hol_{A,i}] = [\exp(2\pi i\alpha_i)]\} / \mathcal K,
\]
where $\Hol_{A,i}$ is the the holonomy of $A$ around $s_i$, $[\cdot]$ denoting conjugacy class, $F_A$ the curvature and $\mathcal K = C^\infty(\Sigmao,K)$ is the gauge group.

The \emph{irreducible locus} $\mathcal{M}'_{\vec \alpha} \subseteq \mathcal M_{\vec \alpha}$ is the subset of equivalence classes of connections whose stabiliser in $\mathcal K$ is the centre of $K$. For an integer $k \geq 1$ and a tuple of dominant integral weights $\vlambda \in \Lambda_k^{\times n}$, write $\mathcal M_{\vlambda,k} \defeq \mathcal M_{\vlambda/k}$, where the canonical isomorphism $\mathfrak t \cong \mathfrak t^*$ via the normalised Killing form has been used to identify weights and coweights.
\end{definition}

The irreducible locus is a smooth manifold, while the moduli space itself may have singularities at the complement, the reducible locus. 

\begin{lemma}[{\cite[Lemma~2.2]{biswas1993principal}}]\label{lem:tangent}
At an irreducible connection $A$, the tangent space $T_{[A]}\mathcal M'_{\vlambda,k}$ is isomorphic to the image of $H^1_c(\Sigmao,\d_A)$ in $H^1(\Sigmao,\d_A)$, where $H_c^i$ denotes the compactly supported de Rham cohomology.
\end{lemma}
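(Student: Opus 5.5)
The plan is the standard deformation-theoretic computation of the tangent space to a character variety with fixed boundary conjugacy classes. First I describe the moduli space near $[A]$ as a quotient of a submanifold of an affine space. Flat connections on $\Sigmao$ form the zero set of $A \mapsto F_A$. The linearisation of this map at $A$ is $a \mapsto \d_A a$. The holonomy condition at each puncture $s_i$ constrains the behaviour of $a$ near $s_i$: the holonomy $\Hol_{A,i}$ may only move within its conjugacy class $[\exp(2\pi i \alpha_i)]$. Infinitesimally this says the variation of the holonomy around a small loop $\gamma_i$ around $s_i$ lies in the tangent space to the conjugacy class, i.e. in $\operatorname{im}(\Ad_{\Hol}-1)$. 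Gauge transformations act infinitesimally by $\xi \mapsto \d_A \xi$ with $\xi \in \Omega^0(\Sigmao,\k)$. At an irreducible connection the stabiliser is the finite centre, so $H^0(\Sigmao,\d_A)=0$. By the slice theorem the Zariski tangent space is then a smooth tangent space, namely the space of infinitesimal deformations modulo infinitesimal gauge.

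Second, I identify this quotient cohomologically. Working with the twisted de Rham complex of $\d_A$ on $\Sigmao$, a closed $1$-form $a$ defines a class in $H^1(\Sigmao,\d_A)$. The restriction of that class to a punctured neighbourhood $D_i^\times$ of $s_i$ lies in $H^1(D_i^\times,\d_A)$. This space is identified with the coinvariants $\k/\operatorname{im}(\Ad_{\Hol_i}-1)$, via the derivative of the holonomy. The condition ``holonomy stays in its conjugacy class'' says exactly that this restriction vanishes for every $i$. Hence the tangent space is
\[
\ker\Big(H^1(\Sigmao,\d_A) \to \textstyle\bigoplus_i H^1(D_i^\times,\d_A)\Big).
\]
Third, I apply the long exact sequence of the pair, i.e. the sequence relating compactly supported cohomology, ordinary cohomology, and cohomology of the ends:
\[
\cdots \to H^0(\partial,\d_A)\to H^1_c(\Sigmao,\d_A) \to H^1(\Sigmao,\d_A) \to \bigoplus_i H^1(D_i^\times,\d_A) \to \cdots
\]
Here ``$\partial$'' denotes the union of the punctured neighbourhoods, which is homotopic to the boundary circles. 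Exactness at $H^1(\Sigmao,\d_A)$ gives that the above kernel equals the image of $H^1_c(\Sigmao,\d_A)$, which is the claim.

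The main obstacle is the second step. It requires making precise the function spaces for connections near the punctures, for instance forms that are locally gauge-equivalent to a fixed model $\alpha_i\,\d\theta$ near $s_i$, possibly after a compact perturbation. It also requires verifying that gauge transformations allowed to be nontrivial near the punctures account exactly for moving the holonomy inside its conjugacy class. I would handle this by choosing a gauge in which every deformation $a$ is, near each $s_i$, of the form $\d_A\xi_i$ plus a compactly supported form. The boundary condition then reads that $[a]|_{D_i^\times}$ is $\d_A$-exact. Irreducibility is used only to guarantee that $H^0$ vanishes, so that the gauge quotient is free and the quotient is smooth of the expected dimension.
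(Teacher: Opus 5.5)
Your proposal is correct and is essentially the standard argument behind the cited \cite[Lemma~2.2]{biswas1993principal}, which the paper invokes without giving a proof of its own. Deformations modulo infinitesimal gauge form the kernel of $H^1(\Sigmao,\d_A)\to\bigoplus_i H^1(D_i^\times,\d_A)$, and exactness of the compactly supported sequence identifies this kernel with the image of $H^1_c(\Sigmao,\d_A)$.

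The one step you should make explicit is that this formal tangent space is the actual tangent space, i.e.\ that the boundary constraints are transversal at $A$. This is most cleanly done on the finite-dimensional relative representation variety rather than through an infinite-dimensional slice theorem. Transversality holds because the cokernel of that restriction map injects into $H^2_c(\Sigmao,\d_A)\cong H^0(\Sigmao,\d_A)^*=0$. So irreducibility is used here as well, not only to make the gauge action free.
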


This description of the tangent bundle to $\mathcal M'_{\vlambda,k}$ and the fixed inner product in the Lie algebra induces the Atiyah--Bott symplectic form $\omega_{\vlambda,k}$ on $\mathcal M'_{\vlambda,k}$. See also \cite{biswas1993principal} in the genus zero case, where the representation variety version of this moduli space
\[
\mathcal M_{\vlambda,k} \cong \big\{\rho \in \Hom(\pi_1(\Sigmao), K) \con \forall i : \rho(S_i) \in [ \exp(2\pi i\lambda_i/k)]\big\} / K
\]
is used to present a finite dimensional group cohomology construction of $\omega_{\vlambda,k}$.

\begin{thm}[{\cite[Section 3.3]{MW}, \cite[Theorem 1.1]{Charles2013}}]
Let $\vlambda \in \Lambda_k^{\times n}$  such that $\mathcal M_{\vlambda,k}' \neq \emptyset$. If $\sum_i\lambda_i$ is in the root lattice, then the Chern-Simon bundle construction gives a pre-quantum line bundle $({\mathcal L}_{\vlambda,k}, \nabla, \langle\cdot,\cdot\rangle)$ over $\mathcal M'_{\vlambda,k}$, e.g. the curvature of the connection $\nabla$, which is Hermitian with respect to $\langle\cdot,\cdot\rangle$ is
$$ F_\nabla = -i \omega_{\vlambda,k}.$$ 
Furthermore, the action of the mapping class group $\Gamma$ lifts to ${\mathcal L}_{\vlambda,k}$.
\end{thm}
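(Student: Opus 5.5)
This theorem is cited from \cite{MW} and \cite{Charles2013}, so my plan reconstructs it in the finite-dimensional, quasi-Hamiltonian or extended moduli space picture. I then check that the integrality hypothesis is exactly what the construction needs.

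\textbf{Realising the moduli space as a reduction.} I would present $\mathcal M_{\vlambda,k}$ as a symplectic quotient. Cut $S^2$ into a disc with $n$ boundary circles. The moduli space of flat connections with framings at the boundaries carries a loop group action (the Meinrenken--Woodward picture of \cite{MW}). Reducing at the coadjoint orbits through $\lambda_i/k$ of the level-$k$ central extension of $LK$ gives $\mathcal M_{\vlambda,k}$. Equivalently, in the Atiyah--Bott picture, $\mathcal M'_{\vlambda,k}$ is the reduction of the affine space of connections on $\Sigmao$ with prescribed boundary behaviour, taken at level $k$. Here \Cref{lem:tangent} identifies the tangent spaces, so the reduced form is $\omega_{\vlambda,k}$.

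\textbf{Building the prequantum datum upstairs.} Next I would construct the prequantum line bundle on the unreduced space. Take the trivial bundle over the space of connections, with connection form $\frac{k}{4\pi}\int\langle A,\dot A\rangle$. The gauge group acts on it through the Chern--Simons cocycle $\exp\bigl(\frac{ik}{4\pi}\int\ldots + \text{WZW term}\bigr)$. This cocycle is well defined because $k\in\Z$ and the normalisation $|\theta|^2=2$ makes the WZW class integral. At each puncture I tensor with the Bott--Borel--Weil bundle $L_{\lambda_i}$ on $\OO_{\lambda_i}$ from \Cref{thm:bbw}, which exists because the $\lambda_i$ are integral. The curvature equals $-i$ times the presymplectic form, so $F_\nabla=-i\omega$ holds upstairs.

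\textbf{Descent to the irreducible locus.} This is the main obstacle. The equivariant bundle descends to $\mathcal M'_{\vlambda,k}$ exactly when stabilisers act trivially on fibres, by the same Kempf descent criterion \cite{drezet1989groupe} used in \Cref{thm:equiv}. On the irreducible locus the stabiliser is $Z(K)$, so I would compute the character by which $Z(K)$ acts on the fibre. The Chern--Simons part acts trivially, since constant central gauge transformations have vanishing cocycle. The orbit part contributes $\prod_i\chi_{\lambda_i}$, that is, the character $\sum_i\lambda_i$ restricted to $Z(K)$. This character is trivial precisely when $\sum_i\lambda_i$ lies in the root lattice \cite{fulton2013representation}, which is the hypothesis. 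The delicate step is checking that the Chern--Simons cocycle gives no extra sign on $Z(K)$ in the presence of boundary holonomy. I would settle this using the explicit formula in \cite{Charles2013}, by evaluating it on constant maps. Hermiticity and the curvature identity descend automatically, because the reduced form is the descent of the restricted form.

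\textbf{Lifting the mapping class group.} Finally, diffeomorphisms of $(S^2,\vec s)$ act on connections and preserve both the cocycle and the orbit data, since these are defined topologically. They therefore commute with the gauge action and descend to an action on $\mathcal L_{\vlambda,k}$ covering the action on $\mathcal M'_{\vlambda,k}$. Isotopic diffeomorphisms differ by gauge transformations up to the identity component, so the action factors through $\Gamma$.
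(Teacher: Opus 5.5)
Your outline is correct and is essentially the argument behind the results the paper cites; the paper itself gives no proof beyond the references \cite{MW} and \cite{Charles2013}. That argument is: prequantise the unreduced level-$k$ space together with the Bott--Borel--Weil bundles $L_{\lambda_i}$, then descend to $\mathcal M'_{\vlambda,k}$, where the only obstruction is the character $\sum_i\lambda_i$ of the generic stabiliser $Z(K)$, which is trivial exactly when $\sum_i\lambda_i$ lies in the root lattice.

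The one step to tighten is the mapping class group lift. You should work with diffeomorphisms of $(S^2,\vec s,\vec v)$ that are the identity near the punctures, so that they preserve the boundary parametrisations of your model, and with isotopies through such maps. A Dehn twist about a small loop around $s_i$ is isotopic to the identity in $\mathrm{Diff}(S^2,\vec s)$, yet it acts on $\mathcal L_{\vlambda,k}$ by a constant phase depending on $\lambda_i$ and $k$, which is in general non-trivial. This is why $\Gamma$ is the mapping class group of $(S^2,\vec s,\vec v)$ rather than of $(S^2,\vec s)$.
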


\section{The Teichmüller family of complex structures}

We will consider the natural family of complex structures on $\mathcal M'_{\vlambda,k}$ parametrised by the appropriate version of Teichmüller space, namely the one used in \cite{AU4}. This requires fixing directions $\vec v = (v_1, \ldots v_n)$, where $v_i \in (T_{s_i}S^2 -\{0\})/{\mathbb R}_+$ and denote by $\T = \T_{(S^2,\vec s,\vec v)}$ the Teichmüller space of $(S^2, \vec s, \vec v)$. See Definition~2.1 in \cite{AU4}. We have a natural projection map $p : \T \rightarrow {\mathbb M}_n$. The mapping class group $\Gamma$ of $(S^2, \vec s, \vec v)$ acts on $\T$, $p$ is invariant under this action and indeed induces an isomorphisms $\T/\Gamma \cong {\mathbb M}_n$. When we consider the vector bundle  $p^*\mathcal V_{\vlambda,k}^\dagger$ over $\T$, we will omit $ p^*$ and simply write $\mathcal V_{\vlambda,k}^\dagger$. For $n_1 < n_2$ we have the forgetful map from $M_{n_2}$ to $M_{n_1}$ which is covered by the corresponding maps of Teichmüller spaces further compatible with natural group homomorphisms of the respective mapping class groups.

Let us now recall the identification of the moduli space of flat $K$-connections with the moduli space of parabolic holomorphic $G$-bundles. So far our notation for the moduli space of semi-stable parabolic holomorphic $G$-bundles has suppressed the dependence of the choice of a point in ${\mathbb M}_n$, but for any $\sigma\in \T$ we will use the notation $\mathscr M_{\vlambda,k, \sigma}$ when we are considering this moduli space with respect to $p(\sigma)$.

\begin{thm}[{Mehta--Seshadri Theorem \cite[Theorem 4.1]{mehta1980moduli}, \cite[Théorème 2.5]{biquard1991fibres}}] \label{thm:MS}
For each $\sigma \in \T$ there is a homeomorphism
\begin{equation} \label{eq:MS}
\Phi^{}_{\vlambda,k}( \sigma) : \mathcal M_{\vlambda,k} \to \mathscr M_{\vlambda,k, \sigma}^{}
\end{equation}
which restricts to a diffeomorphism on the smooth locus, given by assigning to each flat $K$-connection the corresponding semi-stable holomorphic bundle structure on the trivial $G$-bundle with respect to $\sigma$ together with the parabolic subgroups $P_\vlambda$ at $\vec p= p(\vec s)$. 
\end{thm}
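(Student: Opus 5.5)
The plan is to obtain the map $\Phi_{\vlambda,k}(\sigma)$ in three stages: a gauge-theoretic Kobayashi--Hitchin correspondence on the punctured surface, an analytic identification of the boundary behaviour at the punctures with parabolic data, and a comparison of smooth structures on the irreducible/stable loci.

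\textbf{Stage 1: the map.} Let $A$ be a flat $K$-connection on $\Sigmao$ with holonomy conjugate to $\exp(2\pi i\lambda_i/k)$ at $s_i$. The complex structure $\sigma$ on $S^2$ makes $\bar\partial_A \defeq A^{0,1}$ a holomorphic structure on the trivial $G$-bundle over $\Sigmao$. Near each puncture I would gauge $A$ into the model form $\alpha_i\,\d\theta$, using local coordinates adapted to $\sigma$ (the directions $\vec v$ fix the argument of $\theta$). In this gauge the holomorphic frame $z^{\alpha_i}$ has moderate growth, which gives a Deligne-type extension of the bundle across $s_i$. The filtration by growth rates then defines a reduction to $P_{\lambda_i}$ at $p_i$, with weights $\lambda_i/k$. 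Gauge equivalent connections give isomorphic parabolic bundles, so $\Phi_{\vlambda,k}(\sigma)$ is well defined. Since $k$-admissibility gives $\lambda_i/k$ in the fundamental alcove, the underlying bundle can be taken trivial on the dense locus, consistent with the statement.

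\textbf{Stage 2: semistability and bijectivity.} Semistability of the image follows from a degree computation. For any parabolic reduction, the parabolic degree equals the integral of the curvature of the induced unitary connection on the subbundle, with boundary contributions given by the weights. A second fundamental form argument shows this is nonpositive, with equality only for reductions preserved by $A$; this identifies polystable objects with unitary representations. For surjectivity onto S-equivalence classes I would invoke the Mehta--Seshadri existence theorem, or equivalently Biquard's analytic version. That result says every stable parabolic bundle carries a unique-up-to-gauge Hermitian--Einstein metric, adapted to the weights, whose Chern connection is flat with the prescribed holonomy. Applying this to the graded object $\Gr(E)$ of a semistable bundle handles the strictly semistable points, and uniqueness of the metric gives injectivity. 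Continuity of the map and of its inverse follows from the continuous dependence of the Hermitian--Einstein solution on parameters, together with properness on both sides, since both spaces are compact Hausdorff.

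\textbf{Stage 3: smoothness on the smooth locus.} On irreducible connections, \Cref{lem:tangent} identifies $T_{[A]}\mathcal M'_{\vlambda,k}$ with $\operatorname{im}(H^1_c\to H^1)$. On stable parabolic bundles the tangent space is $H^1(\CP,\operatorname{ParEnd}E)$. Using Hodge theory for $\d_A$ with $L^2$ conditions at the punctures, I would show that the derivative of the map is the projection to $(0,1)$-parts, and that this is an $\mathbb R$-linear isomorphism between the two tangent spaces. Both moduli spaces are constructed as smooth slices (Kuranishi maps with vanishing obstruction on the irreducible locus), so the inverse function theorem then gives a diffeomorphism.

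The main obstacle is Stage 2: the analytic existence of adapted Hermitian--Einstein metrics with prescribed singular behaviour at the punctures. That is the genuine content of Mehta--Seshadri and Biquard, and rather than reprove it I would cite it directly. The comparison of local models near the punctures in Stage 1 also needs care, since general $G$ requires passing from weights to parabolic subgroups.
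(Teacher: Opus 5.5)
Your outline agrees with the paper, which gives no argument of its own here and simply imports the result from \cite[Theorem 4.1]{mehta1980moduli} and \cite[Th\'eor\`eme 2.5]{biquard1991fibres}. Like the paper, you place the real content (every polystable parabolic bundle of the given type arises, uniquely up to gauge, from a flat unitary connection with the prescribed local holonomy) in exactly those references, and your Stages 1 and 3 are the standard surrounding arguments. One caveat, shared with the paper's citation: those sources treat vector bundles with weights strictly inside the alcove, and when $\langle\lambda_i,\theta\rangle = k$ the centraliser of $\exp(2\pi i\lambda_i/k)$ is strictly larger than $Z_{\lambda_i}$, so your Stage 1 growth-rate filtration does not canonically determine a $P_{\lambda_i}$-reduction; this edge case does not occur for the shifted weights $(\lambda_i+\rho)/(k+h)$ on which the metaplectic-corrected Hitchin connection is built.
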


We can pull back the complex structure on $\mathscr M_{\vlambda,k}$ via $\Phi^{}_{\vlambda,k}( \sigma)$ to get a complex manifold structure on $\mathcal M'_{\vlambda,k}$, which we denote $\mathcal M'_{\vlambda,k,\sigma}$. This creates a family of Kähler structures on $(\mathcal M'_{\vlambda,k,\sigma},\omega_{\vlambda,k})$ parametrised by $\sigma \in \T$.
By considering the $(0,1)$-part of the prequantum connection with respect to each $\sigma$, we get a holomorphic line bundle structure on ${\mathcal L}_{\vlambda,k}$, which we correspondingly denote ${\mathcal L}_{\vlambda,k,\sigma}$ over $\mathcal M'_{\vlambda,k,\sigma}$, also parametrised by $\sigma \in \T$.

\begin{lemma}[{\cite[Theorem 5.8]{daskalopoulos2011geometric} \cite[Corollary 4.3]{andersen2017witten}}] \label{lem:CS=det}
We have a natural isomorphism of holomorphic line bundles
$$ \Phi^{}_{\vlambda,k}( \sigma) ^* ({\mathscr L}_{\vlambda,k,\sigma})\mid_{\mathcal M'_{\vlambda,k,\sigma}} \cong {\mathcal L}_{\vlambda,k,\sigma}.$$
\end{lemma}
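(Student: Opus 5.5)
The plan is to show that both line bundles are holomorphic line bundles on $\mathcal M'_{\vlambda,k,\sigma}$ with the same underlying smooth Hermitian structure and compatible connections. Any two holomorphic line bundles over the smooth locus that are isomorphic as prequantum data are then holomorphically isomorphic. I will proceed in three steps.

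\textbf{Step 1: pull back along the Mehta--Seshadri map.} The map of \Cref{thm:MS}, $\Phi_{\vlambda,k}(\sigma)$, is a diffeomorphism on the smooth locus, so $\Phi_{\vlambda,k}(\sigma)^*\mathscr L_{\vlambda,k,\sigma}$ is a holomorphic line bundle on $\mathcal M'_{\vlambda,k,\sigma}$ by definition of the pulled-back complex structure. I would equip $\mathscr L_{\vlambda,k,\sigma}$ with its Quillen-type metric. Through the descent in \Cref{thm:equiv} and \Cref{lem:lineiso}, this is the metric induced from the Bott--Borel--Weil Hermitian structure on $L_\vlambda$ over $F_\vlambda$, combined with the determinant-of-cohomology part at the $\mathfrak Q$ factor. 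Its Chern connection has curvature $-i$ times a Kähler form $\omega'$ on $\mathscr M^{\rm s}_{\vlambda,k,\sigma}$.

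\textbf{Step 2: identify the forms.} I would show $\Phi_{\vlambda,k}(\sigma)^*\omega' = \omega_{\vlambda,k}$. This is the usual computation identifying the curvature of the determinant bundle with the Atiyah--Bott form; in the parabolic case the Kostant--Kirillov--Souriau contributions of the orbits $\OO_{\lambda_i}$ account for the boundary terms in $H^1_c \to H^1$ of \Cref{lem:tangent}. Both line bundles then carry Hermitian connections with curvature $-i\omega_{\vlambda,k}$, and both have $(0,1)$-parts defining the holomorphic structure for $\sigma$.

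\textbf{Step 3: compare the bundles.} The two bundles differ by a flat Hermitian line bundle, classified by $H^1(\mathcal M'_{\vlambda,k};U(1))$. In genus zero, with $n$ large or via \Cref{lem:codimspace}, the irreducible locus has high-codimension complement in a rational variety. Its topological fundamental group should therefore be trivial, or at least killed by the $Z(G)$-triviality of \Cref{thm:equiv}(2). The flat difference is then trivial, and an isomorphism of Hermitian bundles intertwining the connections is automatically holomorphic. Naturality in $\sigma$ follows from uniqueness up to a constant phase, fixed by continuity over $\T$.

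The main obstacle is Step 2, the curvature identification for parabolic bundles, which genuinely requires the analytic Quillen metric computation of \cite{daskalopoulos2011geometric}. For a proof in this paper I would simply invoke that result. I would also check the Chern--Simons construction of \cite{MW} against \cite{andersen2017witten}, so that the lifted mapping class group actions agree.
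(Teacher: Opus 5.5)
\paragraph{The gap is in Step 3.} Equal curvatures only determine the two Hermitian line bundles up to a flat unitary line bundle on $\mathcal M'_{\vlambda,k}$, and none of your reasons shows that this flat bundle is trivial.
\begin{itemize}
\item The lemma is for a fixed, arbitrary $\vlambda$, so you cannot take $n$ large. Padding with zero weights, as the paper does in the main theorem, leaves $\mathcal M_{\vlambda,k}$ unchanged, and \Cref{lem:codimspace} requires all weights to be regular.
\item Rationality plus high codimension of the complement gives $\pi_1=0$ only when the ambient space is smooth. The complement of $\mathcal M'_{\vlambda,k}$ can have complex codimension one. When its codimension is large, $\MM_{\vlambda,k}$ is in general singular along it, so local links can contribute to $\pi_1$.
\item The $Z(G)$-condition of \Cref{thm:equiv} is about how the centre acts on the fibres of $\mathfrak L_{\vlambda,k}$ over the stack, i.e.\ the descent condition. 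It says nothing about holonomies of flat bundles on the coarse space.
\end{itemize}
Here is a concrete instance. Take $K=\mathrm{SU}(2)$, $k=2$, $n=4$ and $\vlambda=(\varpi,\varpi,\varpi,\varpi)$. Then all holonomies are conjugate to $\mathrm{diag}(i,-i)$, $\sum_i\lambda_i$ lies in the root lattice, and $\mathcal V^\dagger_{\vlambda,k}\neq 0$. The moduli space is the pillowcase, $\MM_{\vlambda,k}\cong\mathbb P^1$, and $\mathcal M'_{\vlambda,k}$ is a sphere minus the four reducible points. This space carries a $U(1)^3$ worth of flat unitary line bundles, and your curvature comparison cannot tell which one is the difference. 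Separately, continuity over $\T$ does not fix the constant phase, so naturality has to come from the construction itself.

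\paragraph{How to close it, and what the paper does.} The standard remedy is to compare the bundles equivariantly before passing to the quotient.
\begin{itemize}
\item Upstairs, on the simply connected space of connections carrying the parabolic data, two Hermitian line bundles with connections of equal curvature are isomorphic, uniquely up to a constant.
\item Two lifts of the gauge group action preserving metric and connection then differ by a continuous character of the gauge group. This character is trivial on the identity component because $C^\infty(\Sigmao,\k)$ is perfect ($\k$ simple). The gauge group is connected because $K$ is simply connected and $\pi_2(K)=0$.
\item Once the upstairs determinant-type bundle is identified with the pullback of $\mathfrak L_{\vlambda,k}$, the equivariant isomorphism descends to $\mathcal M'_{\vlambda,k}$. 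It is holomorphic for every $\sigma$ and mapping class group equivariant by construction.
\end{itemize}
The paper gives no argument of its own here. It takes the isomorphism of line bundles as a whole from \cite[Theorem 5.8]{daskalopoulos2011geometric} and \cite[Corollary 4.3]{andersen2017witten}, not merely the curvature identity you cite for Step 2.

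\paragraph{A smaller issue in Step 1.} The metric is not supplied by \Cref{thm:equiv} or \Cref{lem:lineiso}. These are purely holomorphic statements, and the latter holds only over $U_{\vlambda,k}$. The metric induced from the Bott--Borel--Weil structure by Kempf--Ness reduction has curvature equal to the reduced Kostant--Kirillov--Souriau form of $\FF_\vlambda$, which is in general not $\omega_{\vlambda,k}$. The input to Step 2 has to be the gauge-theoretic parabolic Quillen metric.
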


We will also denote by  $H_{\vlambda,k}$ the pull back along $p$ of the bundle $H_{\vlambda,k}$, now regarding as a bundle over $\T$ whose fiber at $\sigma$ is the vector space
$$ H_{\vlambda,k, \sigma} = H^0(\mathscr M_{\vlambda,k, \sigma}, {\mathscr L}_{\vlambda,k, \sigma}) \subset H^0(\mathcal M'_{\vlambda,k, \sigma}, {\mathcal L}_{\vlambda,k, \sigma}) $$
where the inclusion is an equality if the codimension of the singular locus in $\mathcal M_{\vlambda,k}$ is strictly greater than $1$. We observe that there is an action of the mapping class group on bundle $H_{\vlambda,k}$ covering the action of the mapping class group on $\T$. 

As we saw in \Cref{iso}, for $m$ large enough the singular locus of ${\mathscr M}_{\vlambda_m+\vec \rho,k+h}$ is of codimension at least $2$, so we see that
$$ H^0(\mathcal M'_{\vlambda_m+\vec \rho,k+h,\sigma}, {\mathcal L}_{\vlambda_m+\vec \rho,k,\sigma}) \cong H^0({\mathscr M}_{\vlambda+\vec \rho,k+h, \sigma}, {\mathscr L}_{\vlambda+\vec \rho,k+h,\sigma})$$
for each $\sigma \in \T$.
Observe that ${\mathscr M}_{\vlambda_m+\vec \rho,k+h, \sigma}$ has a unique square root $\delta_{\sigma}$ (its structure as a holomorphic line bundle depends on $\sigma$, thus the subscript), which via $ \Phi^{}_{\vlambda,k}( \sigma)$ pulls back to a square root of the canonical bundle of $\MM'_{\vlambda+\vec \rho,k+h,\sigma}$, which we also denote $\delta_{\sigma}$. Similarly, we also denote by $\updel H_{\vlambda+\vrho,k+h}$ the pull back of the  holomorphic vector bundle $\updel H_{\vlambda+\vrho,k+h}$ along $p$ and the fibre at $\sigma$ is of course given by
\begin{equation} \label{eq:metaplecticbundle}
 \updel H_{\vlambda+\vrho,k+h, \sigma} :=  H^0(\MM'_{\vlambda+\vec \rho,k+h,\sigma}, {\mathscr L}_{\vlambda +\vrho,k+h,\sigma}\otimes \delta_\sigma).
\end{equation}

Combining the isomorphism and inclusions from \Cref{iso} we arrive at the following theorem.

\begin{thm}\label{incl2}
There are natural inclusion of bundles over $\T$ 
$$ \tilde \pi: H_{\vlambda,k} \into \updel H_{\vlambda+ \vec\rho,k+h} \ \ {\rm and } \ \ \Psi : {\mathcal V}_{\vlambda,k}^\dagger \rightarrow \updel H_{\vlambda+ \vec\rho,k+h} $$
which are compatible with the action of the mapping class groups.
\end{thm}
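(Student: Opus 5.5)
The statement, \Cref{incl2}, is a reformulation over $\T$ of \Cref{thm:meta-incl} and \Cref{incl1}. The plan is to pull those fibrewise constructions back along $p : \T \to \mathbb M_n$ and then verify two things: that the result is a holomorphic (or at least smooth) bundle map, and that it is equivariant for $\Gamma$.

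\emph{Step 1: definition of the maps.} For each $\sigma \in \T$, set $\tilde\pi_\sigma$ equal to the inclusion of \Cref{thm:meta-incl} at $p(\sigma)$, and set $\Psi_\sigma = \tilde\pi_\sigma \circ \Phi_{p(\sigma)}$, using the Pauly isomorphism of \Cref{thm:pauly}. Injectivity holds fibrewise, because $\Phi$ is an isomorphism and $\tilde\pi$ is injective.

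\emph{Step 2: regularity in $\sigma$.} Every ingredient can be performed in families over $\mathbb M_n$:
\begin{itemize}
\item the uniformisation $\M_{\vec P}\cong G[X_{\vec p}]\backslash(\mathfrak Q\times F_\vlambda)$ varies algebraically with $\vec p$;
\item the projections $G/B\to G/P_{\lambda_i}$, which induce $\pi:\M_B\to\M_{P_\vlambda}$, are independent of $\vec p$;
\item the square root $\delta=\mathfrak L_{-\vrho,-h}$ is given by a fixed element of the Picard group, which is locally constant in families;
\item the descent isomorphisms of \Cref{thm:stackisspace} hold in relative form.
\end{itemize}
Since these cohomology groups have constant rank, $H^0$ commutes with base change, and the resulting maps are bundle maps over $\mathbb M_n$. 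Pulling back along $p$ then gives bundle maps over $\T$.

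To pass to the form \eqref{eq:metaplecticbundle}, I would use the Mehta--Seshadri identification (\Cref{thm:MS}) together with \Cref{lem:CS=det}. These identify sections over $\mathscr M$ with sections of the prequantum bundle over $\mathcal M'$ for each $\sigma$. The identification is compatible because the homeomorphism $\Phi_{\vlambda,k}(\sigma)$ is canonical for each $\sigma$.

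\emph{Step 3: equivariance.} An element $\gamma\in\Gamma$ acts on $\T$ and covers the identity on $\mathbb M_n=\T/\Gamma$. On the pulled-back bundles $p^*\mathcal V^\dagger_{\vlambda,k}$, $p^*H_{\vlambda,k}$ and $p^*\updel H$, the $\Gamma$-action is therefore tautological on fibres, since the fibre at $\sigma$ and at $\gamma\sigma$ is the same vector space over $p(\sigma)$. Any map pulled back from $\mathbb M_n$ is then automatically $\Gamma$-equivariant for this action.

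The remaining task is to check that this tautological action agrees with the geometric action on the $\mathcal M'$-side: the lift of $\Gamma$ to the Chern--Simons bundle, and the induced action on $\delta$. I would argue this by naturality of \Cref{thm:MS} and \Cref{lem:CS=det} under diffeomorphisms of $(S^2,\vec s,\vec v)$. The square root $\delta$ is unique, so it is preserved up to a canonical choice. The framing directions $\vec v$ fix the remaining ambiguity in the parabolic data.

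\emph{Main obstacle.} I expect the hardest step to be the projective ambiguity. Pauly's isomorphism, and the Chern--Simons lift, are natural only up to scalars, so equivariance holds a priori only projectively. I would first try to fix the scalars by working with the explicit model of \Cref{lem:quantiso}, in which all maps are honest pullbacks along $q$ and $\iota$, and hence canonical. If that fails, I would settle for compatibility up to a $U(1)$-valued cocycle, which suffices for later projective statements.
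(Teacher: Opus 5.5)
Your proposal is correct and follows essentially the paper's route. The paper obtains \Cref{incl2} just by pulling back the maps of \Cref{thm:meta-incl} and \Cref{incl1} along $p:\T\to\mathbb M_n$, using the Mehta--Seshadri and Chern--Simons identifications, with mapping class group compatibility coming from the maps being defined over $\mathbb M_n$. Your added treatment of base change and of matching the tautological action with the geometric $\mathcal M'$-side action is more careful than the paper, and the scalar ambiguity you flag does not actually arise, since $\Phi=(q^*)^{-1}\circ\iota^*$ and $\tilde\pi$ are canonical maps, as you suspect, so the $U(1)$-cocycle fallback is unnecessary.
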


Note that we in the formulation of the above theorem have suppressed the need pull-backs between the relevant Teichmüller spaces and the corresponding homomorphism of the relevant mapping class groups. Further by Theorem \ref{thm:pauly} the images of the these two inclusions agree.

\section{The metaplectic-corrected Hitchin connection} \label{HC}

In \cite{andersen2012hitchin} a generalisation of the of the original Hitchin contruction in \cite{hitchin1990flat} was given, which slightly reduced the differential geometric assumptions for the existence of a Hitchin connection. However, the proportionality of the first Chern class and the class of the symplectic form prevents the application of the main theorem of \cite{andersen2012hitchin} to be applied to moduli spaces of flat connections on punctures surfaces. 

To overcome this \cite{andersenmetaplectic} proposed a construction via the metaplectic correction to circumvent this topological obstruction. We briefly review this construction for a general symplectic manifold $(M,\omega)$ with a prequantum line bundle $(L, \nabla, \langle \cdot,\cdot\rangle)$, where $\langle \cdot,\cdot\rangle$ is a Hermitian structure in $L$ preserved by the connection $\nabla$ whose curvature is
$$ F_\nabla = - i\omega.$$
Fix a family of complex structures on $M$, which are all compatible with $\omega$, parametrised by a manifold $\T$. For $\sigma \in \T$ denote by $M_\sigma$ the resulting Kähler manifold. Under the assumption that the second Stiefel--Whitney class $w_2(M)$ vanishes and $H^{0,1}(M_\sigma) = 0$ for all $\sigma \in \mathcal T$, there exists a unique square root $\delta_\sigma$ of the canonical bundle $K_\sigma$ of $M_\sigma$ for all $\sigma \in \T$, which has an induced connection from the Levi-Civita connection, whose curvature is $i\rho_\sigma$, where the (1,1)-form $\rho_\sigma$ is the Ricci form. Consider then the subspaces
$$ H^0(M_\sigma, L^{k}_\sigma\otimes \delta_\sigma) \subset C^\infty(M, L^k \otimes \delta_\sigma)$$
for each $\sigma\in \T$. The moduli spaces applications which we have in mind all satisfies that these subspaces are all finite dimensional, thus we will now assume this and denote by $\updel H^{(k)} \vb \T$ the resulting subbundle of the trivial bundle $\updel \mathcal H^{(k)} \defeq C^\infty(M, L^k \otimes \delta_\sigma) \times \T$.
In order for the constructions of \cite{andersenmetaplectic} to apply, the family of complex structure parametrised by $\T$ must be rigid as defined in Definition~5.3 in \cite{andersenmetaplectic}. That is, the infinitesimal deformations of the complex structures parametrised by this family are contained in $\omega \cdot H^0(M_\sigma, S^2T_{M_\sigma})$. 

Suppose now $V$ is a vector field on $\T$ and let $\mathbf{G}(V) \in H^0(M_\sigma, S^2T_{M_\sigma})$ be such that the infinitesimal change of the complex structure on $M$ along $V$ is given by $\omega \cdot \mathbf{G}(V)$. Recall by Definition~3.2 in \cite{andersenmetaplectic} that there is a natural connection $\nabla^{r}$ in ${\mathcal H}^{(k)}$ called the reference connection.

\begin{thm}[{\cite[Theorem 1.2]{andersenmetaplectic}}] \label{thm:metahitchinexists}
Under the above assumptions the metaplectic-corrected Hitchin connection 
$$\updel\nabla^{\rm H} \defeq \nabla^r - u$$ 
preserves $\updel H^{(k)} \subset \updel {\mathcal H}^{(k)}$ where
$u \in \Omega^1(\T,\Diff(\L^{\otimes k} \otimes \delta))$ is given by
\begin{align} \label{eq:metahitchindef}
u(V) &\defeq \frac{1}{4k}\left(\Delta_{\mathbf{G}(V)} - \beta(V) \right)
\end{align}
Here $\Delta_{\mathbf{G}(V)} $ is the second order differential operator with symbol $\mathbf{G}(V)$ defined in \cite[Diagram~16]{andersenmetaplectic} and $\beta \in \Omega^1(\T,C^\infty(M))$ is a solution to 
\[
\overline\partial_M\beta(V) = \frac i 2 \Tr \nabla(\mathbf{G}(V)\rho).
\]
guaranteed to exist and be unique up addition of $\Omega^1(\T)$ if $H^0(M_\sigma) = \C$ for all $\sigma \in \T$.

This theorem in particular implies that the metaplectic-corrected Hitchin connection is given by local differential operator, and so it makes sense to consider it restricted to any open subset of $M$ times any (open) submanifold of $\T$.
\end{thm}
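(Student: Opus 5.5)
The statement is quoted from \cite[Theorem 1.2]{andersenmetaplectic}, so the plan is to reproduce that argument in the present notation. The condition for $\nabla^r - u$ to preserve $\updel H^{(k)}$ is that
\[
\bar\partial_\sigma\bigl(\nabla^r_V s - u(V)s\bigr)=0
\]
for every family $s$ of $\sigma$-holomorphic sections of $L^k\otimes\delta_\sigma$. Differentiating $\bar\partial_\sigma s=0$ along $V$ gives
\[
\bar\partial_\sigma(\nabla^r_V s) = \tfrac{i}{2}\,V[I]\cdot\nabla^{1,0}s .
\]
Here $\nabla^r$ is the trivial connection on $C^\infty(M,L^k)$ twisted by the natural connection on the family $\delta_\sigma$ induced from the Levi-Civita connection. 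Rigidity lets us write $V[I]=\omega\cdot\mathbf G(V)$, with $\mathbf G(V)$ a holomorphic symmetric bivector. The task is therefore to find a differential operator $u(V)$ with
\[
\bar\partial_\sigma\bigl(u(V)s\bigr)=\tfrac{i}{2}\,\omega\mathbf G(V)\cdot\nabla^{1,0}s
\]
on holomorphic $s$.

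First I take $\Delta_{\mathbf G(V)}=\nabla^{1,0}_{\mathbf G(V)}\nabla^{1,0}$, built with the Levi-Civita connection and the connection on $L^k\otimes\delta$, together with a divergence term $\nabla_{\delta\mathbf G(V)}$ absorbed into the definition from \cite[Diagram~16]{andersenmetaplectic}. I then commute $\bar\partial$ past it. The curvature of $L^k\otimes\delta$ is
\[
-ik\omega + \tfrac{i}{2}\rho ,
\]
using $F_\delta=i\rho_\sigma/2$. Commuting produces three contributions:
\begin{enumerate}
\item a first-order term $\propto k\,\omega\mathbf G(V)\cdot\nabla s$, which after dividing by $4k$ exactly matches the required right-hand side;
\item a Ricci first-order term from the Levi-Civita connection acting on the $T^{1,0}$ index, which cancels against the $\tfrac{i}{2}\rho$ part of the $\delta$ curvature;
\item a zeroth-order term $\tfrac{i}{2}\operatorname{Tr}\nabla(\mathbf G(V)\rho)\,s$.
\end{enumerate}
The second cancellation is precisely the point of the metaplectic correction: no relation $c_1\propto[\omega]$ is needed, which is why this route works for punctured surfaces.

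The remaining zeroth-order term is killed by subtracting $\beta(V)$ with $\bar\partial\beta(V)=\tfrac{i}{2}\operatorname{Tr}\nabla(\mathbf G(V)\rho)$. For this I check that the right-hand side is a $\bar\partial$-closed $(0,1)$-form, using holomorphicity of $\mathbf G(V)$ and closedness of $\rho$. The hypothesis $H^{0,1}(M_\sigma)=0$ then gives existence. Uniqueness up to $\Omega^1(\T)$ follows from $H^0(M_\sigma)=\C$, since the difference of two solutions is a holomorphic function on each fibre. Smooth dependence on $\sigma$ is obtained by choosing $\beta$ via a Green's operator.

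The main obstacle is the commutator computation in step 2, making the Ricci terms cancel exactly. I would do it in local holomorphic frames, tracking the contraction of $\bar\partial$ of the Christoffel symbols against $\mathbf G(V)$. Locality of $u$ is immediate from the formula, which justifies the final remark of the statement.
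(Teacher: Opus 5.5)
Your overall route matches the source the paper cites (the paper gives no proof of its own, only \cite[Theorem 1.2]{andersenmetaplectic}): compute $\bar\partial_\sigma$ of $\nabla^r_V s-u(V)s$ on holomorphic $s$, use rigidity, let the half-form curvature absorb the Ricci terms, and solve for $\beta$ using $H^{0,1}(M_\sigma)=0$ and $H^0(M_\sigma)=\mathbb C$. Your step~2 is correct: with the divergence built into $\Delta_{\mathbf G(V)}$, the full Riemann terms cancel, and the leftover Ricci term is killed by $2F_{\delta}$. The gap is in the zeroth-order bookkeeping, where two terms are missing. Write $\mathrm{div}\,\mathbf G(V)=\mathrm{Tr}\nabla\mathbf G(V)$. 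The zeroth-order part of $\bar\partial_\sigma(\Delta_{\mathbf G(V)}s)$ is $\mathrm{Tr}\nabla(\mathbf G(V)F)\,s$ for the \emph{full} curvature $F=-ik\omega+\frac i2\rho$ of $L^k\otimes\delta_\sigma$. Since $\omega$ is parallel, this contains, besides $\frac i2\mathrm{Tr}\nabla(\mathbf G(V)\rho)\,s$, the term $-ik\,\omega\cdot\mathrm{div}\,\mathbf G(V)\,s$. After dividing by $4k$ this becomes a $k$-independent term $\pm\frac i4\,\omega\cdot\mathrm{div}\,\mathbf G(V)\,s$. It cannot be put into $\beta$. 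First, $\beta$ is pinned down by the stated equation. Second, this $(0,1)$-form is not even $\bar\partial$-closed in general: its $\bar\partial$ is a multiple of $\omega\mathbf G(V)\rho-\rho\mathbf G(V)\omega$, which vanishes only in special cases such as $\rho\propto\omega$.

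The missing idea is where this term goes. You describe $\nabla^r$ as twisted by a connection on the family $\delta_\sigma$, but your formula $\bar\partial_\sigma(\nabla^r_V s)=\frac i2V[I]\cdot\nabla^{1,0}s$ is the one for the trivial connection on the fixed bundle $L^k$. It ignores that $\nabla^r$ also differentiates $\delta_\sigma$, which moves with $\sigma$. Take a family $\alpha_t$ of holomorphic $(m,0)$-forms. Then $d\dot\alpha=0$, and the $(m-1,1)$-part $\gamma$ of $\dot\alpha$ is fixed by $V[I]$ (a constant multiple of $\alpha$ contracted with $V[I]|_{T^{0,1}}$). Hence the $(m,0)$-projection of $\dot\alpha$ has $\bar\partial_\sigma$ equal to $-\partial\gamma$. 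This contains $\mathrm{div}(V[I])\,\alpha$ in addition to the first-order term. On $\delta_\sigma=K_\sigma^{1/2}$ the divergence term is halved, which puts it in the same $2:1$ ratio to the first-order term as in $\frac1{4k}\bar\partial_\sigma\Delta_{\mathbf G(V)}$. It cancels exactly $\frac1{4k}\bigl(-ik\,\omega\cdot\mathrm{div}\,\mathbf G(V)\bigr)$. Since the stated $u(V)$ has no divergence term, the theorem can only hold because $\nabla^r$ supplies this term. Your two omissions compensate, so you reach the right formula, but the argument as written does not close. You need to state and prove the $\bar\partial_\sigma\nabla^r_V$ lemma on $L^k\otimes\delta_\sigma$, including the divergence term. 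This is the second, equally essential effect of the metaplectic correction. In the uncorrected Hitchin connection the same divergence term is removed by the Ricci-potential term $4kV'[F]$, so it is not only the first-order Ricci term that forced $c_1\propto[\omega]$ there.
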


We now observe that all the above stated assumptions in this section is satisfied for the moduli space $\mathcal M'_{\vlambda+\vec \rho,k+h} $ with the symplectic form $\omega_{\vlambda+\vec \rho,k+h}$ and the Teichmüller family $\T$ of Kähler structures on it for sufficiently large $m$. Thus we get the following theorem.

\begin{thm} \label{thm:metaexists}
If $m$ is sufficiently large, then the assumptions of \Cref{thm:metahitchinexists} are satisfied and the bundle $\updel H_{\vlambda_m+\vrho,k+h, \sigma}$ is equipped with a metaplectic-corrected Hitchin connection $\updel\nabla^{\rm H}$ which is invariant under the action of the mapping class group $\Gamma$. Moreover this connection is projectively flat.
\end{thm}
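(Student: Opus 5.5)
The plan is to check, one at a time, the hypotheses of \Cref{thm:metahitchinexists} for $M = \mathcal M'_{\vlambda_m+\vrho,k+h}$ with $\omega = \omega_{\vlambda_m+\vrho,k+h}$, prequantum bundle $\mathcal L_{\vlambda_m+\vrho,k+h}$ and the Teichmüller family $\T$. Mapping class group invariance and projective flatness will then be obtained separately.

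\textbf{Existence and uniqueness of the metaplectic correction.} Choose $m$ large. By \Cref{lem:codimspace}, $\mathscr M_{\vlambda_m+\vrho,k+h}$ then has stable points and its singular locus has codimension at least $2$; increasing $m$ further if needed also makes the number of marked points even. \Cref{cor:metaexists} gives $H^{0,1}(M_\sigma)=0$ for every $\sigma$. \Cref{lem:evenchern} shows that a square root $\delta_\sigma$ of the canonical bundle exists, so $w_2(M) \equiv c_1(K)\equiv 0 \bmod 2$. The condition $H^{0,1}=0$ then makes $\delta_\sigma$ unique.

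\textbf{Remaining hypotheses.} Finite dimensionality of the fibres follows from Hartogs' theorem, since the codimension bound identifies sections on $M_\sigma$ with sections on the projective variety $\mathscr M_{\vlambda_m+\vrho,k+h,\sigma}$. The same argument gives $H^0(M_\sigma)=\C$, because holomorphic functions extend to a connected compact normal space. Rigidity is the standard Hitchin argument. An infinitesimal deformation of the marked curve is a class in $H^1(\CP,T(-\vec p))$. Its image in $H^1(M_\sigma,T)$ is given by cupping with the quadratic symbol of the parabolic Hitchin map. That symbol lies in $\omega\cdot H^0(M_\sigma,S^2T)$, because the parabolic Hitchin Hamiltonians built from the Killing form are holomorphic functions on $T^*M_\sigma$ that are quadratic in the fibres. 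This is the step I expect to be the main obstacle. In the parabolic setting one has to check that the quadratic differentials with at most simple poles at $\vec p$ correspond exactly to the Teichmüller directions of $\T$, where the directions $\vec v$ play no role on $M$. I would follow Hitchin's original proof, replacing $H^1(\Sigma,T\Sigma)$ by $H^1(\CP,T\CP(-\vec p))$ and using Serre duality with $H^0(K^2(\vec p))$.

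\textbf{Invariance and projective flatness.} Once the hypotheses hold, \Cref{thm:metahitchinexists} yields $\updel\nabla^{\rm H}$. The one-form $\beta$ is determined up to $\Omega^1(\T)$, and we fix it, for example, by a normalisation of its average. Naturality then gives $\Gamma$-invariance: $\Gamma$ acts on $\T$, on $(M,\omega)$ and, by the lifting theorem, on $\mathcal L$. It preserves $\delta$ because $\delta$ is unique, and the map $\mathbf G$ is equivariant, so the pulled-back connection is again a metaplectic-corrected Hitchin connection. It can therefore differ from the original only by a scalar one-form, which the normalisation of $\beta$ removes. For projective flatness, the curvature of $\updel\nabla^{\rm H}$ is a differential operator preserving the holomorphic sections. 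Following \cite{andersenmetaplectic}, rigidity together with $H^0(M_\sigma)=\C$ and the absence of holomorphic vector fields on the rational space $M_\sigma$ (via $H^0(T)$ and Hartogs) should force the curvature's second- and first-order parts to vanish on holomorphic sections. What remains is a scalar two-form on $\T$, which is exactly projective flatness.
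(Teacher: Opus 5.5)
Your hypothesis-checking is more explicit than the paper's. The paper simply asserts that the assumptions of \Cref{thm:metahitchinexists} hold for large $m$, and it proves neither rigidity nor $\Gamma$-invariance. Your rigidity outline via the quadratic parabolic Hitchin Hamiltonians, and your normalisation of $\beta$ to obtain invariance, are reasonable. One caveat on rigidity: it is a tensor-level statement, namely that the actual variation of $J$ equals $\omega\cdot\mathbf G(V)$ with $\mathbf G(V)$ holomorphic. It is not only a statement about the class in $H^1(M_\sigma,T)$, and Hitchin's computation is what supplies the tensor-level version.

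\textbf{The gap: projective flatness.} Write $\updel\nabla^{\rm H}=\nabla^r-u$. The curvature then contains $[u(V),u(W)]$, which is a priori a \emph{third}-order operator. Its principal symbol is a multiple of the Poisson bracket $\{\mathbf G(V),\mathbf G(W)\}$, with the symbols viewed as fibrewise quadratic functions on $T^*M_\sigma$. You only discuss the second- and first-order parts, and none of the conditions you list (rigidity, $H^0(M_\sigma)=\C$, $H^0(M_\sigma,T)=0$) controls this term. It is a holomorphic section of $S^3T_{M_\sigma}$, and nothing among your hypotheses forces it to vanish. In Hitchin's proof it is killed by the Poisson commutativity of the quadratic Hitchin Hamiltonians, i.e.\ integrability of the Hitchin system. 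Here you would need the parabolic analogue of that, followed by the curvature computation for the lower-order terms.

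A second problem is your appeal to the absence of holomorphic vector fields. This does not follow from rationality: $\CP[r]$ is rational and carries many holomorphic vector fields. You would need a separate vanishing result for $H^0(\MM_{\vlambda_m+\vrho,k+h},T)$.

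\textbf{The paper's route.} The paper avoids both issues. It gets projective flatness from \Cref{thm:main}: on $\imag\Psi\cong\mathcal V^\dagger_{\vlambda,k}$, the connection $\updel\nabla^{\rm H}$ is projectively equivalent to the Knizhnik--Zamolodchikov connection, which is flat. The proof of \Cref{thm:main} uses only the existence of $\updel\nabla^{\rm H}$, not its flatness, so this is not circular. To repair your proposal, either supply the parabolic integrability input together with the vanishing of holomorphic vector fields, or replace your last step with this appeal to \Cref{thm:main}.
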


\begin{proof}
Since the Knizhnik--Zamolodchikov connection is flat, our main theorem below, Theorem \ref{thm:main}, implies that the Hitchin connection is projectively flat.
\end{proof}

Note that we do not in the proof of our main theorem below use the flatness of the connection constructed in this section.

\section{Projective equivalence of the Hitchin and Knizhnik--Zamolodchikov connections}

This section presents the main theorem of the paper, the projective equivalence of the Hitchin and Knizhnik--Zamolodchikov connection, as well as two important consequences of this theorem. Specifically, the metaplectic-corrected Hitchin connection of the shifted level and weights also defines a Hitchin connection for the original level and weights, and that there is a projectively unique Hitchin connection. In particular, this implies that the heat kernel construction of the Hitchin connection in \cite{biswas2024geometrization} coincides with the definition of the Hitchin connection from \cite{andersen2012hitchin, andersenmetaplectic}.

\begin{thm}[Main Theorem] \label{thm:main}
Assume $n \geq 3$ and let $\vlambda \in \Lambda_k^{\times n}$ such that $\sum_i \lambda_i$ is in the root lattice and assume the moduli space $\MM_{\vlambda,k}$ has smooth points. Then for sufficiently large $m$ the metaplectic-corrected Hitchin connection restricts to a connection in the bundle of covacua $\mathcal V_{\vlambda,k}^\dagger \cong \imag\Psi \subseteq \updel H_{\vlambda_m + \vrho, k + h}$ which is projectively equivalent to the Knizhnik--Zamolodchikov connection on $\mathcal V_{\vlambda,k}^\dagger$.
\end{thm}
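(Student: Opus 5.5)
The strategy is the one announced in the introduction: realise both connections as second-order differential operators acting on sections of one line bundle over a common dense Zariski-open set, then show that their difference is a scalar. The comparison takes place on $U_{\vlambda_m,k}$ from Diagram~(\ref{diag:stacky}), transported through the inclusion $\Psi$ of \Cref{incl2}.

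First, pass to $\vlambda_m$. Adding $m$ points with trivial weight does not change the space of covacua, by propagation of vacua, and it is compatible with the forgetful maps on $\T$. So it suffices to compare the two connections on $\mathcal V^\dagger_{\vlambda_m,k}$ over the larger Teichmüller space. Choose $m$ large enough that \Cref{lem:codimspace}, \Cref{cor:metaexists} and \Cref{thm:metaexists} all apply.

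Second, write both connections in a common holomorphic frame over $U_{\vlambda_m,k}\times\T$. On the Knizhnik--Zamolodchikov side, \Cref{prop:2ODO} expresses $\mathbf\Omega$ as a second-order operator $D^{\rm KZ}(V)$ on sections of $\L_{\vlambda_m}$ over $\FF_{\vlambda_m}$. Pulling back by $\iota$ and using \Cref{lem:lineiso} and \Cref{lem:quantiso}, it becomes an operator on $\mathtt L_{\vlambda_m}$ over $U$ that preserves $\imag q^*$. On the Hitchin side, \Cref{thm:metahitchinexists} is local. We therefore restrict $\updel\nabla^{\rm H}=\nabla^r-u$ to $\Phi_{\vlambda,k}(\sigma)^{-1}(U)$, using \Cref{thm:MS} and \Cref{lem:CS=det}. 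We also restrict it to the image of $\tilde\pi$, which on $U$ is multiplication by a fixed nonvanishing local section of $\mathfrak L_{\vrho,h}\otimes\delta$ coming from the pullback $\pi$. Conjugating by this section turns $\updel\nabla^{\rm H}$ into a connection of the form $\d+D^{\rm H}(V)$ on $\mathtt L_{\vlambda_m}$ over $U$, with $D^{\rm H}(V)$ a differential operator of order at most two. Concretely, the parabolic bundles on $U$ have trivial underlying bundle, so varying $\vec p$ only moves the flags. Hence the reference connection $\nabla^r$ and the trivial connection $\d$ on $V^\g_{\vlambda_m}$ differ by an explicit first-order operator, obtained from the variation of the identification $q$.

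Third, and this is the main obstacle, prove that $D^{\rm KZ}(V)-D^{\rm H}(V)$ has vanishing second- and first-order parts. For the symbols, the plan is to scale. Replace $(\vlambda_m,k)$ by $(\ell\vlambda_m,\ell k)$, so that $L$ becomes $L^\ell$. Then $\frac1{k+h}\Omega^{ij}$ and $\frac1{4(k+h)}\Delta_{\mathbf G(V)}$ acquire leading terms in $\ell$ whose principal symbols can be compared directly. The symbol of $\mathbf\Omega$ is $\frac{1}{k+h}\sum_{i<j}\sum_\nu X^i_\nu X^j_\nu\,\frac{\d z_i-\d z_j}{z_i-z_j}$. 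The symbol of $\mathbf G(V)$ is the Kodaira--Spencer tensor. Its explicit genus-zero form, given by the residue pairing of Higgs fields at the marked points, equals this bivector once $T\FF$ is identified with $\bigoplus_i \g/\mathfrak p_i$ modulo the diagonal. Both operators are also polynomial in $\ell$, and both preserve $H^0$ at every level $\ell$. Comparing coefficients of powers of $\ell$ should then force the second-order parts to agree and, after that, the first-order parts to agree, since the first-order discrepancy grows linearly in $\ell$ while holomorphic sections of high powers separate jets. The step I expect to be hardest is matching the $\ell$-independent first-order terms, in particular the Ricci/$\beta$ correction from the metaplectic shift $(\vrho,h)$ against the term $\nabla_{[\pi\underline\nu^i,\pi^\bot\underline\nu^j]}$. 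I would attack this by computing everything on $U$ in the flag coordinates, where $\delta$ corresponds to $-\vrho$ and $-h$ by \Cref{cor:metaplec}.

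Finally, once the difference is multiplication by a holomorphic function $f_V$ on $U$, use that both operators map $\imag q^*$ into itself. Hence $f_V\cdot s$ extends holomorphically to $\MM_{\vlambda_m,k}$ for every $s\in\imag q^*$. Applying this with a nonzero $s$ and using the codimension bound of \Cref{lem:codimspace}, the function $f_V$ is meromorphic on the compact normal space with no poles, so it is constant on each fibre. Thus $f$ is a scalar one-form on $\T$, which gives the projective equivalence.
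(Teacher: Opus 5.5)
Your overall architecture matches the paper's: a common Zariski-open $U$, both connections written as operators on one line bundle, varying the level, and finally a holomorphic function that must be constant. The gap is your third step, which you yourself flag as the main obstacle and do not carry out; it is exactly where the proof lives. You assert without argument that the Kodaira--Spencer tensor $\mathbf G(V)$ equals the symbol of $\mathbf\Omega$ via a residue pairing of Higgs fields. You leave the matching of first-order terms (the Ricci/$\beta$ correction against $\nabla_{[\pi\underline\nu^i,\pi^\bot\underline\nu^j]}$) to an unspecified computation. The remark that holomorphic sections of high powers separate jets does not, on its own, force anything to vanish.

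The missing idea is that no explicit matching is needed. Write the difference at level $(\ell\vlambda,\ell k)$ as $u_\ell(V)=\updel\nabla^{\rm H}_V-\nabla^{\rm KZ}_V=\nabla^2_{\mathcal G_\ell(V)}+\nabla_{Z_\ell(V)}+f_\ell(V)$. After absorbing lower-order terms, one may take $\mathcal G_\ell(V)$ of type $(2,0)$ and $Z_\ell(V)$ of type $(1,0)$. Since both connections preserve holomorphic sections, \cite[Lemma 5.1]{andersenmetaplectic} gives $[\nabla_W,u_\ell(V)]s=0$ for every $(0,1)$ vector field $W$ and every $s\in\imag\Psi_\sigma$. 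Commuting $\nabla_W$ through $u_\ell(V)$ produces the curvature terms $\ell\bigl(2\omega_{\vlambda,k}(W)\otimes\nabla_-+\omega_{\vlambda,k}(\nabla^{LC}_-W)\bigr)\mathcal G_\ell(V)$ and $\ell\,\omega_{\vlambda,k}(W,Z_\ell(V))$. These carry an extra factor of $\ell$ relative to the remaining terms, while the difference has a fixed scaling in $\ell$ (the paper uses $u_\ell=\frac{k+h}{\ell k+h}u_1$). So they must vanish separately. Non-degeneracy of the $(1,1)$-form $\omega_{\vlambda,k}$ then gives successively $\mathcal G_\ell(V)=0$, $Z_\ell(V)=0$, and $W(f_\ell(V))=0$. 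This bypasses the Kodaira--Spencer/Casimir identification and the metaplectic first-order terms entirely. Without it, or a completed explicit computation, your proposal does not show that the difference is zeroth order.

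Your final step also has a hole. Knowing that $f_V s$ extends holomorphically for a single nonzero $s$ only bounds the poles of $f_V$ by the zero divisor of $s$. Moreover, the complement of $U$ in $\MM_{\vlambda_m,k}$ (parabolic bundles whose underlying bundle is non-trivial) is in general a divisor. \Cref{lem:codimspace} concerns the strictly semistable locus, so it says nothing about poles along $\MM_{\vlambda_m,k}\setminus U$. You need a separate argument excluding such poles. The paper iterates: $(\nabla^{\rm KZ}_V)^m s=(\updel\nabla^{\rm H}_V-\tilde f_\ell(V))^m s$ must stay holomorphic for every $m$, and a pole of $\tilde f_\ell(V)$ would eventually violate this. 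Only then do compactness and constancy give the scalar one-form.
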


In particular, it follows that the metaplectic-corrected Hitchin connection $\updel\nabla^{\rm H}$ by combining with Theorem \ref{thm:pauly} also preserves the subbundles $\tilde\pi (H_{\ell\vlambda,\ell k}) \subseteq \updel H_{\ell\vlambda_m + \vrho,\ell k+h}$ for positive all integers $\ell$ and sufficiently large $m$. This means in particular that we get an induced connection in $H_{\vlambda,k}$ under the assumptions of our main theorem, since $\tilde \pi$ is an isomorphism of bundles onto its image.

The proof of our main theorem will be performed for all $\ell$ simultaneously and the results for different values of $\ell$ will be compared to obtain the result. The strategy will be to show the different orders of the differential operator scale differently with $\ell$ and therefore the second and first order must vanish.

\begin{proof}
If $\mathcal V_{\vlambda,k}^\dagger = 0$ the statement is vacuous, so assume that it does not so that all the results in \Cref{iso} apply. The proof will be a comparison of the two differential operators defining the two connections. To this end they must be viewed on the same space.

For $\vlambda_m = (\vlambda,\vec 0) \in \Lambda_k^{\times (n+m)}$, there are natural isomorphisms $\MM_{\vlambda_m,k} \cong \MM_\vlambda$ and $F_{\vlambda_m} \cong F_\vlambda$ since the trivial weight 0 induces a trivial flag structure $F_0 = G/P_0 = G/G = \siton \ast$, as well as isomorphisms $\mathcal V^\dagger_{\vlambda_m,k} \cong \mathcal V^\dagger_{\vlambda,k}$ and $H^0(\MM_{\vlambda_m,k},\mathscr L_{\vlambda_m,k}) \cong H^0(\MM_{\vlambda,k},\mathscr L_{\vlambda,k})$. By appending zeroes to $\vlambda$, therefore, it may be assumed without loss of generality that $m$ is sufficiently large in the sense of \Cref{thm:metaexists} and $\vlambda = \vlambda_m$ so that $\updel H_{\vlambda + \vrho, k + h}$ carries a metaplectic-corrected Hitchin connection.

Recall the non-empty Zariski opens $U_{\vlambda, k}$ introduced in Diagram~(\ref{diag:stacky}). For any positive integer $\ell$, the open $U_{\ell\vlambda,\ell k} = U_{\vlambda,k}$ because the moduli stability condition depends on the ratio $\vlambda/k$, and $\mathtt L_{\ell\vlambda,\ell k} \cong \mathtt L_{\vlambda,k}^{\otimes \ell} \vb U_{\vlambda,k}$.

The forgetful maps $G/B \vb G/P_{\lambda_i}$ and $G/B \vb \siton \ast$ induce a map $F_{\ell \vlambda + \vrho} \to F_{\vlambda}$, which restricts to a partial map $U_{\ell \vlambda + \vrho, \ell k + h} \dashrightarrow U_{\ell\vlambda,\ell k} = U_{\vlambda,k}$, whose domain is a non-empty Zariski-open, since $U_{\vlambda,k} \subseteq F_\vlambda$ is a non-empty Zariski-open. Without loss of generality, the domain of this partial map may be assumed to be all of $U_{\ell\vlambda + \vrho, \ell k + h}$, as the structure of the proof will only use that it defines a non-empty Zariski-open of both the GIT quotient and the moduli space.

The sequence of weight vectors $\{(\ell\vlambda + \vrho)/(\ell k + h)\}_{\ell \in \N}$ converges to $\vlambda/k$ and is thus bounded. As such, parabolic reductions of sufficiently high degree are never destabilising for any $\ell$. Therefore a finite number of open conditions which determine bundle semistability for each $\ell$ separately. As the semistability conditions are linear in the weights, for every possible combination of weights inherited by any reduction, each condition is either eventually satisfied or eventually violated for sufficiently large $\ell$. Since there are only finitely many degrees for reductions to be considered, and only finitely many possible combinations of weights to be inherited, there is an $\ell_0$ such that any quasi-parabolic structure on the trivial bundle equipped with weights $(\ell\vlambda + \vrho)/(\ell k + h)$ is either semistable for all $\ell \geq \ell_0$ or unstable for all $\ell \geq \ell_0$.

Hence, we may choose $U_{\ell\vlambda + \vrho, \ell k + h} = U_{\ell_0 \vlambda + \vrho, \ell_0 k + h}$ for all $\ell \geq \ell_0$. Write therefore $U \defeq \bigcap_\ell U_{\ell \vlambda + \vrho, \ell k + h}$ for the finite intersection of non-empty Zariski-opens, with symplectic form $\omega_{\vlambda,k}$ that is the curvature of the restriction of $(\mathtt L_{\vlambda,k},\nabla)$.

Finally, \Cref{prop:2ODO} gives a presentation of the Knizhnik--Zamolodchikov connection in terms of covariant derivatives with respect to vector fields defined explicitly in terms of $\g$-action. Since the map $G/B \vb G/P_\lambda$ is $G$-equivariant, these vector fields can canonically be lifted from $U_{\vlambda,k}$ to $U$. Following the observation in the proof of \Cref{thm:meta-incl}, the line bundle $L_{\vlambda,k}^{\otimes \ell} = L_{\ell \vlambda,\ell k} \vb U$ coincides with $L_{\vlambda+\vrho,\ell k + h} \otimes \delta \vb U$, where $\delta$ is the metaplectic bundle, which exists by \Cref{lem:evenchern}, since we may assume $n$ even without loss of generality by appending a zero to the weight vector if needed, just like before. Therefore the differential operator $\bf\Omega$ defining $\nabla^{\rm KZ}$ on sections of $\mathtt L_{\vlambda,k}^{\otimes \ell} \vb U_{\vlambda,k}$ in terms of covariant derivatives along these vector fields with respect to the Bott--Borel--Weil connection $\nabla$ canonically pulls back to a differential operator on the pullback of $(L_{\vlambda,k}^{\otimes \ell},\nabla)$ to $U$. Thus the Knizhnik--Zamolodchikov connection and the metaplectic-corrected Hitchin connection are both defined in terms of second-order differential operators on $\mathcal L_{\vlambda, k}^{\otimes \ell} \vb U$ and their difference may now be considered.

For a positive integer $\ell$ and a tangent vector $V \in T_\sigma \T$, write the difference $\updel\nabla^{\rm H}_V - \nabla^{\rm KZ}_V$ as
\[
u_\ell(V) \defeq \updel\nabla^{\rm H}_V - \nabla^{\rm KZ}_V \eqdef \nabla^2_{\mathcal G_\ell(V)} + D_\ell(V) \in \Diff^{\leq 2}(U,\mathtt L_{\ell \vlambda, \ell k}) \cong \Diff^{\leq 2}(U,\mathtt L_{\vlambda,k}^\ell)
\]
where $D_\ell(V)$ is a first-order differential operator, $\mathcal G_\ell(V) \in C^\infty (S^2T_U)$ is the second-order symbol, and $\nabla^2$ is given by $\nabla^2_{X \otimes Y} = \nabla_X \nabla_Y - \nabla_{\nabla^{LC}_X Y}$ where $\nabla^{LC}$ is Levi-Civita connection with respect to the Kähler metric.

By construction, the metaplectic-corrected Hitchin connection preserves the space holomorphic sections of $\mathscr L_{\vlambda,k,\sigma}^{\otimes \ell}$ over any open subset of  $\MM'_{\ell \vlambda + \vrho,\ell k + h,\sigma}$ and the Knizhnik--Zamolodchikov connection preserves specifically the image of $H^0(\FF_{\vlambda,k},\L_{\vlambda,k})$ under $\Phi_\sigma$ inside $H^0(\MM'_{\ell \vlambda + \vrho,\ell k + h,\sigma},\mathscr L_{\vlambda,k,\sigma}^{\otimes \ell})$ as in \Cref{lem:quantiso}. Therefore it follows from \cite[Lemma 5.1]{andersenmetaplectic} that $\nabla^{0,1}u_\ell(V) s = 0$ whenever $s \in H^0(U,\mathtt L_{\vlambda,k}^{\otimes \ell})$ is the restriction of an element in $\imag\Phi_\sigma = \imag\Psi_\sigma \subseteq H^0(\MM_{\ell \vlambda + \vrho,\ell k + h,\sigma},\mathscr L_{\vlambda,k,\sigma}^{\otimes \ell})$.

This is equivalent to the commutator $[\nabla^{0,1},u_\ell(V)]$ vanishing on $\imag \Psi_\sigma$. Together with the observation that $u_\ell(V)[\imag \Psi_\sigma] \subseteq H^0(\MM_{\ell\vlambda + \vrho,\ell k + h,\sigma},\L_{\vlambda,k,\sigma}^{\otimes \ell})$ this will imply the theorem by evaluating the $\ell$-scaling behaviour.

Without loss of generality, $\mathcal G_\ell(V)$ may be assumed to be of purely holomorphic type, since the difference is seen to be a first-order differential operator, as for any holomorphic section $s$ and vector fields $X,Y$
\[
(\nabla_X \nabla_Y - \nabla_{X^{1,0}}\nabla_{Y^{1,0}})s &= (\nabla_X \nabla_Y - \nabla_{X}\nabla_{Y^{1,0}} + \nabla_{X} \nabla_{Y^{1,0}} - \nabla_{X^{1,0}}\nabla_{Y^{1,0}})s\\
&= (\nabla_X \nabla_{Y^{0,1}} + \nabla_{X^{0,1}}\nabla_{Y^{1,0}})s = 0 + (\nabla_{[X^{0,1},Y^{1,0}]} + \ell\omega_{\vlambda,k}(X^{0,1},Y^{1,0}))s
\]
using that $\ell\omega_{\vlambda,k}$ is the curvature of $\nabla$ on $\mathtt L^\ell_{\vlambda,k}$ and
\[
2\nabla_X\nabla_Y - \{\nabla_X,\nabla_Y\} &= [\nabla_X,\nabla_Y] = \nabla_{[X,Y]} + \ell\omega(X,Y)
\]
where the anti-symmetriser is $\{\nabla_X,\nabla_Y\} = \nabla_X\nabla_Y + \nabla_Y\nabla_X$. As such, without loss of generality, this first-order differential operator can be taken as part of $D_\ell(V)$, so it may be assumed that $\mathcal G_\ell(V) \in C^\infty (S^{2,0}(TU))$.

Working out the bracket $[\nabla_W,\nabla^2_{\mathcal G_\ell}]$ gives
\[
 [\nabla_W,\{\nabla_X,\nabla_Y\}] &= \{[\nabla_W,\nabla_X],\nabla_Y\} + \{\nabla_X,[\nabla_W,\nabla_Y]\} \\
 &= \{\nabla_{[W,X]} + \ell\omega_{\vlambda,k}(W,X),\nabla_Y\} + \{\nabla_X,\nabla_{[W,Y]} + \ell\omega_{\vlambda,k}(W,Y)\}\\
&= \{\nabla_{[W,X]},\nabla_Y\} + \{\nabla_X,\nabla_{[W,Y]}\} + 2\ell(\omega_{\vlambda,k}(W,X)\nabla_Y + \omega_{\vlambda,k}(W,Y)\nabla_X) \\
&+ \ell(\omega_{\vlambda,k}(\nabla^{LC}_X W, Y) + \omega_{\vlambda,k}(\nabla^{LC}_Y W,X) + \omega_{\vlambda,k}(W,\nabla^{LC}_X Y + \nabla^{LC}_Y X)),
\]
where $\nabla^{LC}$ is the Levi-Civita connection, and
\[
[\nabla_W,\nabla_{\nabla_X^{LC} Y}] &= \ell\omega_{\vlambda,k}(W,\nabla_X^{LC} Y) + \nabla_{[W,\nabla_X^{LC} Y]} \\
{[W,\nabla_X^{LC} Y]} &= {(\Lie_W \nabla^{LC})_X Y} + {\nabla^{LC}_{[W,X]} Y} + {\nabla^{LC}_X [W,Y]}
\]
so 
\begin{equation} \label{eq:comm}
[\nabla_W,\nabla^2_{\mathcal G_\ell(V)}] = \nabla^2_{[W,\mathcal G_\ell(V)]} + \ell(2\omega_{\vlambda,k}(W) \otimes \nabla_- + \omega_{\vlambda,k}(\nabla_-^{LC}W))\mathcal G_\ell(V) - \nabla_{(\Lie_W \nabla^{LC}_-) \mathcal G_\ell(V)}
\end{equation}
with 
\[
[W,X \otimes Y] &\defeq [W,X]\otimes Y + X \otimes [W,Y]; \\
(\omega_{\vlambda,k}(W) \otimes \nabla_-)(X \otimes Y) &\defeq \omega_{\vlambda,k}(W,X)\nabla_Y; \\ 
\omega_{\vlambda,k}(\nabla_-^{LC}W)(X \otimes Y)&\defeq \omega_{\vlambda,k}(\nabla_X^{LC} W,Y);\\
(\nabla_-^{LC})(X \otimes Y) &\defeq \nabla_X^{LC}Y.
\]
By construction of the Hitchin and the Knizhnik--Zamolodchikov connections in Equations~\ref{eq:metahitchindef} and \ref{eq:KZ}, $u_\ell$, $\mathcal G_\ell$, and $D_\ell$ scale with $\ell$ as
\[
u_\ell &= \frac{k + h}{\ell k + h}u_1 & \mathcal G_\ell &= \frac{k + h}{\ell k + h}\mathcal G_1 & D_\ell &= \frac{k + h}{\ell k + h}D_1. &
\]

Since \Cref{eq:comm} is valid for all $\ell$, each component with different scaling must be 0 independently so it follows that $(2\omega_{\vlambda,k}(W) \otimes \nabla_- + \omega_{\vlambda,k}(\nabla_-^{LC}W))\mathcal G_\ell(V) = 0$ for all smooth $W = W^{0,1}$. At any point $x$ a smooth function $f$ with $f(x)=0$ reduces this equation to $(\omega_{\vlambda,k}(W,-) \otimes \d f)\mathcal G_\ell(V) = 0$. Since this holds for all $f$ and all antiholomorphic vector fields, it follows that $\mathcal G_\ell(V) = 0$ by the non-degeneracy of the type-(1,1) form $\omega_{\vlambda,k}$.

Therefore $u_\ell(V)s|_U = D_\ell(V)s|_U$ for any $s \in \imag\Psi_\sigma$. Writing $D_\ell(V) = \nabla_{Z_\ell(V)} + f_\ell(V)$, without loss of generality $Z_\ell(V) = Z_\ell^{1,0}(V) \in C^\infty(T^{1,0}U_{\vlambda,k})$, because anti-holomorphic vector fields act trivially on holomorphic sections. Then
\[
[\nabla_W,D_\ell(V)] &= [\nabla_W,\nabla_{Z_\ell(V)}] + W(f_\ell(V)) = \nabla_{[W,Z_\ell(V)]} + \ell\omega_{\vlambda,k}(W,Z_\ell(V)) + W(f_\ell).
\]

By the same scaling argument $\omega_{\vlambda,k}(W,Z_\ell) = 0$ for all $\ell$, so that the non-degeneracy of $\omega_{\vlambda,k}$ of type (1,1) again implies $Z_\ell(V) = 0$ and therefore $W(f_\ell(V)) = 0$.

Therefore $u_\ell(V)s|_U = f_\ell(V) \cdot s|_U$ for $f_\ell(V) \in H^0(U)$ whenever $s \in \imag \Psi_\sigma$, which defines a meromorphic function $\tilde f_\ell(V)$ on $\MM_{\ell \vlambda + \vrho,\ell k + h}$. 

Assume for the sake of contradiction that $\tilde f_\ell(V)$ has a pole of order $n > 0$. Then, considering that $\nabla^{\rm KZ}_V$ preserves $\imag \Psi_\sigma \subseteq H^0(\MM_{\ell\vlambda + \vrho,\ell k + h},\mathscr L_{\vlambda,k}^{\otimes \ell})$, it follows that for any $0 \neq s \in \imag\Psi_\sigma$ there is an integer $m$ such that
\[
(\nabla^{\rm KZ}_V)^m s = (\updel \nabla^{\rm H}_V - \tilde f_\ell(V))^m s
\]
is not holomorphic, in contradiction with the fact that $\nabla^{\rm KZ}_V$ and $\updel \nabla^{\rm H}_V$ preserve the holomorphicity of $s \in \imag\Psi_\sigma$. Therefore $\tilde f_\ell(V)$ is holomorphic and since $\MM_{\ell\vlambda + \vrho,\ell k + h}$ is compact, it is constant. This concludes the proof.
\end{proof}

Combining \Cref{thm:main} and \Cref{thm:meta-incl} immediately gives the following.

\begin{corollary}
The metaplectic-corrected Hitchin connection $\updel\nabla^{\rm H}$ on $\updel H_{\vlambda + \vrho, k + h}$ for the moduli space $\MM_{\vlambda + \vrho,k + h}$ pulls back along the inclusion $\tilde\pi$ from \Cref{thm:meta-incl} to a Hitchin connection on the Verlinde bundle $H_{\vlambda,k}$ for the moduli space $\MM_{\vlambda,k}$.
\end{corollary}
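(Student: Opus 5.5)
The plan is to transport the connection along $\tilde\pi$ and read off its defining properties from \Cref{thm:main}. By \Cref{thm:pauly} and \Cref{incl2}, the images of $\tilde\pi$ and $\Psi$ inside $\updel H_{\vlambda+\vrho,k+h}$ coincide. (As in the proof of \Cref{thm:main}, zeros are appended to $\vlambda$ so that $m$ is large enough and $n$ is even.) \Cref{thm:main} shows that $\updel\nabla^{\rm H}$ preserves $\imag\Psi$, since on it $\updel\nabla^{\rm H}$ differs from $\nabla^{\rm KZ}$ only by a scalar one-form. Hence $\updel\nabla^{\rm H}$ preserves $\imag\tilde\pi$. Because $\tilde\pi$ is an isomorphism of bundles onto its image, we may define
\[
\nabla \defeq \tilde\pi^{-1}\circ\updel\nabla^{\rm H}\circ\tilde\pi .
\]
This is a connection on $H_{\vlambda,k}$, and it is projectively equivalent to $\nabla^{\rm TUY}$ under $\Phi$, because $\Psi=\tilde\pi\circ\Phi$.

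It remains to show that $\nabla$ is a Hitchin connection for $\MM_{\vlambda,k}$. That is, it must have the form $\nabla^r - u'$, where $u'(V)$ is a second-order differential operator on sections of $\mathscr L_{\vlambda,k}$ over the smooth locus, its symbol is proportional to $\mathbf G(V)$ with the appropriate scaling in the level, and it preserves holomorphic sections. The key is the local identification used in the proof of \Cref{thm:meta-incl}. Over the stable locus, the pullback of $\mathscr L_{\vlambda+\vrho,k+h}\otimes\delta$ along $\pi$ is $\mathfrak L_{\vlambda,k}$, and $\tilde\pi$ is literally pullback along the forgetful map $\pi$. Since $\updel\nabla^{\rm H}=\nabla^r-u$ is a local differential operator (last sentence of \Cref{thm:metahitchinexists}), we restrict to the common Zariski open $U$ from the proof of \Cref{thm:main}. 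On $\pi$-pulled-back sections, $u(V)$ acts through a differential operator that descends along $\pi$.

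The second-order symbol of $u(V)$ is $\frac1{4(k+h)}\mathbf G(V)$. The Teichmüller deformation of the complex structure is compatible with $\pi$, because the Mehta--Seshadri maps of \Cref{thm:MS} commute with forgetting flags. Consequently $\mathbf G(V)$ pushes forward to the corresponding bivector on $\MM_{\vlambda,k}$, and the descended operator has the correct symbol. The reference connection $\nabla^r$ is likewise compatible with pullback. Holomorphic sections are preserved because $\updel\nabla^{\rm H}$ preserves $\imag\tilde\pi$.

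The main obstacle is this descent step. One must check that the first-order and zeroth-order parts of $u$ restricted to $\pi$-pullbacks are genuinely pullbacks of operators downstairs, so that $\nabla$ has the required form and is not merely an abstract connection. I would handle this as follows. By \Cref{thm:main}, $u(V)$ restricted to $\imag\tilde\pi$ agrees with $\updel\nabla^{\rm H}_V-\nabla^r_V$, and this in turn equals $\mathbf\Omega(V)$ plus a scalar. The operator $\mathbf\Omega(V)$ is defined on $\FF_\vlambda$ by \Cref{prop:2ODO}, via vector fields that live on $F_\vlambda$, so the descended operator is given explicitly by $\mathbf\Omega$ plus a constant. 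This makes the descent automatic, and the identification of the symbol then reduces to the symbol computation in the proof of \Cref{thm:main}.
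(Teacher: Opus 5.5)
Your first paragraph is exactly the paper's argument, and it is all the paper uses. The corollary is recorded as an immediate consequence of \Cref{thm:main} and \Cref{thm:meta-incl}: $\updel\nabla^{\rm H}$ preserves $\imag\Psi=\imag\tilde\pi$ (the images agree by \Cref{thm:pauly}), and the Hitchin connection on $H_{\vlambda,k}$ is, \emph{by definition}, the transported connection $\tilde\pi^{-1}\circ\updel\nabla^{\rm H}\circ\tilde\pi$. The paper stresses that the construction without metaplectic correction does not apply to $\MM_{\vlambda,k}$ directly. So there is no independently defined downstairs Hitchin connection whose form you must match; the metaplectic detour is the construction. You can stop after that paragraph.

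Your remaining paragraphs aim at something stronger: that the transported connection is $\nabla^r-u'$, with $u'(V)$ a differential operator on $\MM_{\vlambda,k}$ whose symbol is a multiple of the downstairs $\mathbf G(V)$. As written, that argument fails at several points.

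\emph{The Mehta--Seshadri step.} There is no map $\mathcal M_{\vlambda+\vrho,k+h}\to\mathcal M_{\vlambda,k}$ between moduli of flat connections, because the holonomy conjugacy classes differ. So the claim that the maps of \Cref{thm:MS} ``commute with forgetting flags'' has no content. The forgetful map $\pi$ exists only on the holomorphic side, for each fixed $\sigma$, and only rationally on the moduli spaces.

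\emph{Pushing forward $\mathbf G(V)$.} $\mathbf G(V)$ is defined relative to the symplectic form, and the two spaces carry different symplectic forms: $\pi^*\mathscr L_{\vlambda,k}$ and $\mathscr L_{\vlambda+\vrho,k+h}$ differ by $\delta$. Moreover, $\pi$ has positive-dimensional fibres whenever some $\lambda_i$ is non-regular. This always happens here, since the appended weights are $0$ while upstairs one has full flags at those points. A bivector on the larger space need not be $\pi$-projectable, so ``$\mathbf G(V)$ pushes forward'' is unjustified.

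\emph{Descent of $u(V)$.} Preserving the finite-dimensional space $\imag\tilde\pi$ does not imply that $u(V)$ sends arbitrary local $\pi$-pullbacks to $\pi$-pullbacks. The descent is asserted rather than shown.

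\emph{Your fallback.} Your last paragraph has a sign slip, since $u=\nabla^r-\updel\nabla^{\rm H}$. More seriously, it identifies $\nabla^r$ with the trivial connection on $V_\vlambda^{\mathfrak g}\times\mathbb M_n$. \Cref{thm:main} only gives $\Psi^{-1}\circ\updel\nabla^{\rm H}\circ\Psi=\nabla^{\rm KZ}$ plus a scalar one-form. Since $\Psi_\sigma$ depends on $\sigma$ through the Mehta--Seshadri identification, $\Psi^{-1}\circ\nabla^r\circ\Psi-\mathrm d$ is in general a nonzero first-order term.

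Even granting all this, that route gives a downstairs symbol equal to that of $\mathbf\Omega(V)$, which you never compare with any multiple of the downstairs $\mathbf G(V)$. The symbol computation in the proof of \Cref{thm:main} only compares the upstairs Hitchin symbol with the lifted KZ symbol on $U$. So keep the first paragraph and drop the descent argument, or state it as a separate claim that still needs proof.
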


Moreover, the proof strategy above can be applied to state an important uniqueness theorem for Hitchin connections on Verlinde bundles.

\begin{corollary} \label{cor:unique}
For any $\vlambda \in \Lambda_k^{\times n}$, where $n \geq 3$, any two Hitchin connections in the Verlinde bundle $H_{\vlambda,k}$ are projectively equivalent.
\end{corollary}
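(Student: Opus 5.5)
The plan is to transport both Hitchin connections to the same bundle and then rerun the scaling argument from the proof of \Cref{thm:main}. Let $\nabla^1$ and $\nabla^2$ be two Hitchin connections in $H_{\vlambda,k}$. In the sense of \cite{andersen2012hitchin, andersenmetaplectic}, each has the form $\nabla^r - u^a$. Here $u^a \in \Omega^1(\T,\Diff(\mathscr L_{\vlambda,k}))$, and both operators have second-order symbol determined by $\mathbf{G}(V)$, up to the prescribed normalisation. Their difference $v(V) \defeq \nabla^1_V - \nabla^2_V = u^2(V)-u^1(V)$ is therefore a differential operator of order at most two. As in the main proof, we first append zero weights, which changes neither $\MM_{\vlambda,k}$ nor $H_{\vlambda,k}$. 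This lets us assume $n$ is large and the smooth locus has high codimension. We then restrict $v(V)$ to the Zariski-open set $U \subseteq \MM_{\vlambda,k}$ (or to the stable locus) and write $v(V) = \nabla^2_{\mathcal G(V)} + \nabla_{Z(V)} + f(V)$.

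The key point is that $v(V)$ preserves holomorphic sections, because both connections preserve $H_{\vlambda,k}$. By \cite[Lemma 5.1]{andersenmetaplectic} this gives $[\nabla^{0,1}, v(V)] = 0$ on $H^0$. To obtain the vanishing of the higher-order parts, I will use the scaling in $\ell$ exactly as in \Cref{thm:main}. Hitchin connections are constructed uniformly in the level, so I consider $\ell\vlambda,\ell k$ for all $\ell$. The difference $v_\ell$ then has second- and first-order parts scaling in a controlled way (like $1/(\ell k+\mathrm{const})$). The commutator formula \eqref{eq:comm} contains a term linear in $\ell\omega_{\vlambda,k}$ that must vanish separately. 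Evaluating it against $\d f$ with $f(x)=0$ and using non-degeneracy of the $(1,1)$-form $\omega_{\vlambda,k}$ kills first $\mathcal G(V)$ (after the reduction to holomorphic type) and then $Z(V)$. This leaves $v(V) = f(V)$ with $\bar\partial f(V)=0$ on $U$.

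If one prefers not to vary $\ell$, there is an alternative. The symbols of both Hitchin connections are fixed as $\frac{1}{4k}\mathbf{G}(V)$ by the definition of a Hitchin connection. So $\mathcal G(V)=0$ directly, and only the first-order scaling step is needed. In that case $[\nabla^{0,1},\nabla_{Z}] $ produces the curvature term $\omega(W,Z)$, whose vanishing forces $Z=0$ once one knows that $H^0(\MM_{\vlambda,k},\mathscr L_{\vlambda,k}^{\ell})$ separates jets. That is where varying $\ell$ is genuinely used, via $L^\ell$ being very ample on $U$.

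The remaining step is to extend $f(V)$ from $U$ to a global holomorphic function and conclude that it is constant. I would repeat the pole argument from the end of the proof of \Cref{thm:main}. If $f(V)$ had a pole along a divisor, then applying $v(V)$, i.e.\ multiplying repeatedly by $f(V)$, to a nonzero holomorphic section would eventually produce a non-holomorphic section. This contradicts the fact that $v(V)$ preserves $H_{\vlambda,k}$. Alternatively, Hartogs' theorem applies over the high-codimension singular locus. Hence $f(V)$ is holomorphic on the compact, normal space $\MM_{\vlambda,k}$ and therefore constant. Thus $\nabla^1-\nabla^2$ is a scalar one-form on $\T$, which is projective equivalence. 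I expect the main obstacle to be justifying the $\ell$-scaling when the two connections are abstract Hitchin connections rather than given by explicit formulas. I would handle this by insisting, as part of the definition, that the Hitchin connection is given for all levels by the same level-independent operators divided by the level-dependent constant. Failing that, I would fall back on the fixed-symbol argument above.
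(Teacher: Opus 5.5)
Your main line is the paper's own route. The paper proves this corollary only by remarking that the strategy of the proof of \Cref{thm:main} applies: compare the two connections, kill the second- and first-order parts of the difference by varying the level, and exclude poles of the remaining function. You also correctly locate the crux, namely that two connections on the single bundle $H_{\vlambda,k}$ come with no family in $\ell$.

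The genuine gap is your fallback, which cannot replace that assumption. Work at a fixed level with $v(V)=\nabla_Z+f$ and $Z$ of type $(1,0)$. For $W$ of type $(0,1)$ and holomorphic $s$, preservation of holomorphic sections reads $\nabla_{[W,Z]}s+(\ell\,\omega(W,Z)+W(f))s=0$. Even with full jet separation this yields only $\bar\partial Z=0$ and $\ell\,\omega(W,Z)+W(f)=0$. So $Z$ is a holomorphic vector field with a Hamiltonian potential, not zero. Such operators (Kostant--Souriau quantisations of holomorphic Hamiltonian vector fields) exist and are not scalar. For $G=\mathrm{SL}(2,\mathbb C)$, $n=4$ and generic admissible weights of Verlinde rank at least $2$, one has $\MM_{\vlambda,k}\cong\mathbb P^1$. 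Since $H^{0,1}(\mathbb P^1)=0$, the equation for $f$ is solvable for every holomorphic $Z$, and $\nabla_Z+f$ acts on $H^0(\mathbb P^1,\mathcal O(d))$ through the $\mathfrak{sl}_2$-action. Now add a smooth one-form $V\mapsto\nabla_{Z(V)}+f(V)$, with $Z(V)$ holomorphic for the complex structure at $\sigma$, to a Hitchin connection. The result is a connection $\nabla^r+u$ with the same symbol that still preserves $H_{\vlambda,k}$, but it is not projectively equivalent to the first. So with a fixed symbol at a single level, projective uniqueness genuinely fails. What forces $Z=0$ is only the separation of $\ell\,\omega(W,Z)$ from $W(f)$ when $Z$ and $f$ are independent of $\ell$.

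Hence the uniform-in-$\ell$ structure must be part of the definition, and it must be stated precisely. For every $\ell$, each connection should be $\nabla^r$ minus one fixed $\nabla$-expression times a factor $c_\ell$. This factor must be common to both connections and to all orders, the zeroth order included. The two symbols must also agree and be of type $(2,0)$, as $\mathbf G(V)$ is. If some zeroth-order term carries an extra factor $\ell$, the scaling argument only gives $\omega(W,Z)=-W(\psi)$ for its coefficient $\psi$, which is the same Hamiltonian ambiguity. Such a term is exactly what reducing a $(1,1)$-component of a symbol to holomorphic type produces, since $\nabla_{\bar Y}\nabla_X s=\nabla_{[\bar Y,X]}s+\ell\,\omega(\bar Y,X)s$ for holomorphic $s$. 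This is realised on $\mathbb P^1$. If $Z_\phi$ is the holomorphic $(1,0)$-part of the Hamiltonian vector field of $\phi$, then $\nabla_{Z_\phi}$ plus a suitable multiple of $\phi\,\mathrm{tr}_\omega(\nabla^{0,1}\nabla^{1,0})$ is one fixed expression. It preserves $H^0(\mathscr L_{\vlambda,k}^{\otimes\ell})$ for every $\ell$ and acts non-scalarly.

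Under the precise hypothesis your argument is sound. Your jet-separation remark belongs in the main line: by ampleness of $\mathscr L_{\vlambda,k}$, holomorphic sections realise arbitrary $2$-jets at points of $U$ for large $\ell$. That is what converts identities holding only on $H^0$ into identities of coefficients, a step the paper skips by testing against arbitrary smooth functions. Finally, appending zero weights changes neither $\MM_{\vlambda,k}$ nor its singular locus. Zero weights are not regular, so \Cref{lem:codimspace} does not apply and your Hartogs alternative is unavailable. The pole argument does not need it.
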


\end{document}